\documentclass[preprint,12pt]{elsarticle}
\usepackage{}
\usepackage{enumitem}
\usepackage{amsmath,amssymb,enumitem}
\usepackage{amsfonts}

\usepackage{amsfonts}
\usepackage{amsmath}
\usepackage{amssymb}
\usepackage{graphics}
\usepackage{graphicx}
\usepackage{mathrsfs}
\usepackage{indentfirst}
\usepackage{stmaryrd}
\usepackage{latexsym}
\usepackage{xypic}
\newtheorem{definition}{Definition}[section]
\newtheorem{lemma}[definition]{Lemma}
\newtheorem{theorem}[definition]{Theorem}
\newtheorem{proposition}[definition]{Proposition}
\newtheorem{example}[definition]{Example}

\newtheorem{remark}[definition]{Remark}
\newtheorem{corollary}[definition]{Corollary}
\newtheorem{open problem}[definition]{Open problem}
\newproof{proof}{\textbf{Proof}}

\begin{document}

\begin{frontmatter}



\title{\textbf{Filter-induced linear topologies on residuated lattices: Hausdorffness, profiniteness, and finiteness conditions}}


\author{Jiang Yang$^{a}$, Pengfei He$^{b,\ast}$, Juntao Wang$^{c}$ }
\cortext[cor1]{Corresponding author. \\
Email addresses: yangjiangdy@126.com, hepengf1986@126.com, wjt@xsyu.edu.cn }

\address[A]{School of Mathematical Sciences, Guangxi Minzu University, Nanning, 530006, China}
\address[B]{School of Mathematics and Information Science, Shaanxi Normal University, Xi'an, 710119, China}
\address[C]{School of Science, Xi'an Shiyou University, Xi'an, 710065, China}

\begin{abstract}

We study linear topologies on residuated lattices generated by systems of filters, with emphasis on the uniform structures and separation properties that they determine. A down-directed family of filters gives a natural compatible uniformity, and the associated topology makes the residuated lattice into a topological algebra. We characterize Hausdorffness by the triviality of the intersection of the underlying filter system. For compact topological residuated lattices, we prove the equivalence between topological profiniteness, residual finiteness, and representation as a closed subdirect product of finite discrete residuated lattices. We also analyze the descending chain condition ($DCC$) on filters. Under $DCC$, every filter system has a least element; hence every zero-dimensional linear topology is induced by a single filter, and the canonical map from filters to zero-dimensional linear topologies is bijective. This gives a corrected form of earlier representation arguments and identifies precisely where $DCC$ is required. Finally, working throughout in $\mathrm{ZFC}$, we give a sufficient criterion for the existence of non-discrete Hausdorff linear topologies, illustrated by the G\"{o}del algebra.

\end{abstract}

\begin{keyword}  Residuated lattice \sep filter system \sep linear topology \sep Hausdorff topology \sep profinite algebra


\end{keyword}

\end{frontmatter}


\section{Introduction}
\label{intro}

Topological algebra studies algebraic structures equipped with topologies for which the basic operations are continuous. A classical and particularly useful source of such topologies is provided by linear topologies. In the case of groups, a suitable family of subgroups may be used as a neighbourhood base at the identity; Birkhoff \cite[pp. 52--54]{Birkhoof} already emphasized this point of view. Such ``subgroup topologies''---or, more generally, linear topologies---are naturally described by uniform structures; see, for example, \cite{Dimitric,Fuchs,Hoo,Hull}. From this perspective, the main question is not merely whether a compatible topology exists, but how its separation, compactness, and finite-quotient properties are reflected by the algebraic data defining the uniformity.

This paper applies that uniform-theoretic viewpoint to residuated lattices. Residuated lattices were introduced in the algebraic literature by Ward and Dilworth \cite{Ward1} and include many ordered algebraic structures such as lattices of ideals, lattice-ordered groups, and algebras of relations. They also play an important role in algebraic logic, but the emphasis of the present paper is topological: we regard a residuated lattice as an algebra whose filters provide a canonical family of congruences and hence a natural source of uniformities. For a residuated lattice $L$, filters correspond to congruences, and therefore a down-directed family of filters is expected to behave like a system of neighbourhoods of the distinguished element $1$.

More precisely, if $F$ is a filter of $L$, the associated congruence classes are denoted by $x/F$. The topology
\[
\mathcal{T}_{F}=\{U\subseteq L: \text{for every }x\in U,\ x/F\subseteq U\}
\]
is a basic example of a filter-induced linear topology. More generally, a system of filters $\mathcal F$ produces the topology
\[
\mathcal{T}_{\mathcal F}=\{U\subseteq L: \text{for every }x\in U,\text{ there is }F\in\mathcal F\text{ such that }x/F\subseteq U\}.
\]
The corresponding family of congruences is a base for a uniformity, and the resulting topological space is compatible with all algebraic operations. This construction is the organizing theme of the paper.

The first issue is Hausdorffness. For a uniform space, the Hausdorff property is equivalent to the diagonal being the intersection of all entourages. In the present setting this condition becomes a purely filter-theoretic statement: the topology induced by a system of filters is Hausdorff exactly when the intersection of that system is the trivial filter $\{1\}$. This gives a direct bridge between separation axioms and subdirect representations of residuated lattices.

The second issue is profiniteness. Profinite topological algebras are inverse limits of finite discrete algebras. For topological groups, rings, semigroups, and distributive lattices, profiniteness is closely related to Stone-type topological properties; see Johnstone \cite[Sec. VI.2]{Johnstone} and Ribes--Zalesskii \cite{Ribes}. Banaschewski's work on profinite universal algebras \cite{Banaschewski} provides a general background for this viewpoint. For residuated lattices, however, profiniteness has not been studied as systematically. We prove that, in the compact case, topological profiniteness, residual finiteness, and representation as a closed subdirect product of finite discrete residuated lattices are equivalent. Thus compactness and finite quotients interact in the expected topological-algebraic way.

The third issue is the representation of zero-dimensional linear topologies. Earlier work on topological residuated lattices asserted broad representation statements for such topologies. The subtle point is that passing from a linear topology to a filter system involves finite intersections of filter neighbourhoods, while arbitrary intersections need not remain inside the same system. This is precisely where an additional finiteness condition is needed. We show that the descending chain condition ($DCC$) on filters supplies the missing step: every system of filters then has a least element. Consequently, on a residuated lattice satisfying $DCC$, every zero-dimensional linear topology is induced by a single filter. In particular, the canonical map from filters to zero-dimensional linear topologies is bijective under $DCC$.

The fourth issue is the existence of non-discrete Hausdorff compatible topologies. Throughout the paper we work in $\mathrm{ZFC}$. The analogous group-theoretic problem is the topologizability problem: an infinite group need not admit a non-discrete Hausdorff group topology. Infinite non-topologizable groups exist in $\mathrm{ZFC}$; see Hesse \cite{Hesse}, Olshanskii \cite{Olshanskii}, and Klyachko--Olshanskii--Osin \cite{KlyachkoOlshanskiiOsin}. Hence the existence of a non-discrete Hausdorff topology is not a formal consequence of infinitude. In the residuated-lattice setting we give a filter-system criterion for the existence of non-discrete Hausdorff linear topologies and illustrate it by the G\"{o}del algebra.

The paper is organized as follows. Section 2 recalls the necessary facts on residuated lattices, filters, congruences, and uniform structures, and proves the general uniform construction from down-directed congruence families. Section 3 studies profinite residuated lattices and proves the compact profiniteness criteria. Section 4 develops the finiteness conditions on filters, especially $DCC$, and records their consequences for filter systems. Section 5 applies these results to Hausdorff and zero-dimensional linear topologies, corrects the relevant representation argument under $DCC$, and gives examples and sufficient criteria for non-discrete Hausdorff linear topologies.

\section{Preliminaries}
In this section, we summarize some basic definitions and results, which will be used.


\begin{definition} \label{2.1a}
\emph{\cite{Hohle} An algebraic structure $\mathbf{L}=(L,\wedge,\vee,\odot,\rightarrow,0,1)$ of type $(2,2,2,\\2,0,0)$ is called a \emph{residuated lattice} if it satisfies the following conditions:
\begin{itemize}
 \item[\rm (1)] $(L,\wedge,\vee,0,1)$ is a bounded lattice;
 \item[\rm (2)] $(L,\odot,1)$ is a commutative monoid;
 \item[\rm (3)] $x \odot y \leq z$ if and only if $x \leq y \rightarrow z$, for all $x,y,z \in L$, where $\leq$ is the partial order of the lattice $(L,\wedge,\vee, 0, 1)$.
 \end{itemize}}
\end{definition}

\begin{remark}\label{2.1b}
For many readers, a \emph{residuated lattice} is a more general structure which lacks commutativity of monoid reduct (and hence requires two ``implication'' or ``division'' operations in the signature: $\rightarrow,\leftarrow$, or also denoted $\backslash,/$) and bounds for the lattice reduct, and moreover does not include a designated constant ``0''. That is, a \emph{residuated lattice} is an algebra
$\mathbf{L}=(L,\wedge,\vee,\odot,\backslash,/,1)$ where $(L,\wedge,\vee)$ is a lattice, $(L,\odot,1)$ is a monoid, and satisfies the \emph{law of residuation}: for all $x,y,z\in L$
$$x\leq z/y\Leftrightarrow x\odot y\leq z\Leftrightarrow y\leq x\backslash z,$$
where $\leq$ is the induced lattice order defined by $x\leq y$ iff $x\vee y=y$. An \emph{$FL$-algebra, or pointed residuated lattice}, is simply a residuated lattice with an additional designated constant 0 in the signature
; i.e., residuated lattices are the 0-free reduct of $FL$-algebras. $\mathbf{L}$ is called \emph{commutative} if the monoid reduct $(L,\odot,1)$ is commutative (in which case, $x\backslash y=y/x$ and whose common value is
often denoted $x\rightarrow y$); \emph{integral} if the monoid identity 1 is the greatest element in the lattice reduct $(L,\wedge,\vee)$; \emph{bounded} if there is a least element $\bot$ in the lattice reduct (which
implies there is also greatest element $\top:=\bot\backslash\bot$ by residuation), where for an $FL$-algebra, when $\bot=0$ it is called \emph{zero-bounded}. In this paper, what we call \emph{residuated lattice} is often understood as an $FL_{ew}$-algebra, but we use the former term for brevity.
\end{remark}


For convenience of readers, we provide some basic properties of residuated lattices in the following proposition.
\begin{proposition} \label{2.5a}\cite{Turunen}
In any residuated lattice $(L,\wedge,\vee,\odot,\rightarrow,0,1)$, the following properties hold: for any $x, y, z \in L$,
\begin{enumerate}
  \item[\rm$(R_{1})$] $1\rightarrow x=x$, $x \rightarrow 1 = 1$;
  \item[\rm$(R_{2})$] $x \leq y$ if and only if $x \rightarrow y =1$;
  \item[\rm$(R_{3})$] if $x \leq y$, then $y \rightarrow z \leq x \rightarrow z$, $z \rightarrow x \leq z \rightarrow y$ and
              $x \odot z \leq y \odot z$;
  \item[\rm$(R_{4})$] $x \odot (x \rightarrow y) \leq y$;
  \item[\rm$(R_{5})$] $x \odot y \leq x \wedge y$, $x \leq y \rightarrow x$;
  \item[\rm$(R_{6})$] $x \rightarrow (y \rightarrow z)=(x \odot y) \rightarrow z = y \rightarrow (x \rightarrow z)$;
  \item[\rm$(R_{7})$] $x\vee(y\odot z)\geq (x\vee y)\odot(x\vee z)$, hence $x^{m}\vee y^{n}\geq(x\vee y)^{mn}$.
\end{enumerate}
\end{proposition}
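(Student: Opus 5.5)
Each item follows from residuation (3), the monoid laws (2), and the fact that $1$ is the top element. We use the equivalence $x\odot y\le z \iff x\le y\to z$ together with commutativity and associativity of $\odot$. We prove the items in an order where each one uses only earlier ones.

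For $(R_1)$: $1\to x$ is the largest $u$ with $u\odot 1\le x$. Since $u\odot1=u$, that largest $u$ is $x$, so $1\to x=x$. Also $1\odot x\le 1$, hence $1\le x\to1$, so $x\to 1=1$. For $(R_2)$: $x\le y$ iff $1\odot x\le y$ iff $1\le x\to y$ iff $x\to y=1$. For $(R_4)$: apply residuation to $x\to y\le x\to y$. This gives $(x\to y)\odot x\le y$, and commutativity finishes it.

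For the monotonicity of $\odot$ in $(R_3)$: $y\odot z\le y\odot z$ gives $y\le z\to(y\odot z)$. Then $x\le y$ gives $x\le z\to (y\odot z)$, i.e. $x\odot z\le y\odot z$. The two antitone/isotone claims for $\to$ then follow by residuation:
\begin{itemize}
\item $(y\to z)\odot x\le (y\to z)\odot y\le z$, using monotonicity and $(R_4)$;
\item $(z\to x)\odot z\le x\le y$.
\end{itemize}
For $(R_5)$: $x\odot y\le x\odot 1=x$, and symmetrically $x\odot y\le y$, so $x\odot y\le x\wedge y$. The inequality $x\odot y\le x$ is exactly $x\le y\to x$ by residuation.

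For $(R_6)$: compare lower sets. For any $u$, $u\le x\to(y\to z)$ iff $u\odot x\le y\to z$ iff $(u\odot x)\odot y\le z$ iff $u\odot(x\odot y)\le z$ iff $u\le (x\odot y)\to z$. Hence the two elements coincide, and commutativity of $\odot$ gives the third expression.

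The only step needing thought is $(R_7)$. First we show that $\odot$ distributes over $\vee$. Since $a\odot b\le c$ iff $b\le a\to c$, the map $a\odot(-)$ is a lower adjoint and so preserves joins: $a\odot(b\vee c)=(a\odot b)\vee(a\odot c)$. Then we expand
\[(x\vee y)\odot(x\vee z)=(x\odot x)\vee(x\odot z)\vee(y\odot x)\vee(y\odot z).\]
By $(R_5)$, each of the first three terms is $\le x$, so the whole is $\le x\vee(y\odot z)$.

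For the power form we use two inductions.
\begin{itemize}
\item Iterating the first inequality gives $x\vee y^n\ge (x\vee y)^n$, by induction on $n$ with $x\vee y^{n+1}\ge (x\vee y^n)\odot(x\vee y)$.
\item Symmetrically, $a\vee b\ge c$ implies $a^m\vee b\ge c^m$. Apply this with $a=x$, $b=y^n$, $c=(x\vee y)^n$. This gives $x^m\vee y^n\ge ((x\vee y)^n)^m=(x\vee y)^{mn}$.
\end{itemize}
For the second induction, write $c^{m+1}=c^m\odot c\le (a^m\vee b)\odot(a\vee b)$. This is $\le a^{m+1}\vee b$, because every other cross term involves $b$ and is $\le b$ by $(R_5)$.

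This bookkeeping with the induction is the main, albeit mild, obstacle.
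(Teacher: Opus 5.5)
Your proof is correct and complete. The paper gives no proof of its own here and simply cites Turunen for these standard facts; your derivation is the usual one, working directly from the residuation adjunction, including the observation that $a\odot(-)$, as a lower adjoint, preserves finite joins, which is what $(R_7)$ needs.
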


\begin{definition} \label{2.6a}\emph{\cite{Turunen} Let $(L,\wedge,\vee,\odot,\rightarrow,0,1)$ be a residuated lattice. A \emph{filter} is a nonempty set $F\subseteq L$ such that for each $x, y\in L$,
\begin{enumerate}
\item[\rm(i)] $x,y\in F$ implies $x\odot y\in F$,
\item[\rm(ii)] if $x\in F$ and $x\leq y$, then $y\in F$.
\end{enumerate}}
\end{definition}

Note that in a residuated lattice $L$, a filter $F$ of $L$ is equivalent to a \emph{deductive system}, that is, $F$ satisfies the following conditions: (i) $1\in F$, and (ii) $x,~x\rightarrow y\in F$ implies $y\in F$.

Let $L$ be a residuated lattice. By $\mathcal{F}(L)$ and $Con(L)$, we mean the set of all filters (congruences) of $L$. There is a close correspondence between congruences and filters of residuated lattices. For each congruence
$\theta$ on the residuated lattice $L$, let $[1]_{\theta}=\{x\in L:(1,x)\in \theta\}$. Then $[1]_{\theta}$ is a filter of $L$, called the filter \emph{determined} by a congruence $\theta$. Conversely, for each filter $F$,
$\theta_{F}=\{(x,y)\in L\times L:(x\rightarrow y)\odot(y\rightarrow x)\in F\}$ is a congruence on $L$, called the congruence \emph{determined} by a filter $F$. For any $x\in L$, let $x/F$ be the equivalence class $x/\theta_{F}$. If we denote by $L/F$ the quotient set $L/\theta_{F}$, then $L/F$ becomes a residuated lattice with the operations induced from those of $L$. Moreover, the following result holds.

\begin{theorem}\label{4.1}\cite{Galatos} For every residuated lattice $L$, we have $$\mathcal{F}(L)\cong Con(L).$$
In other words, the lattice of filters is isomorphic to the lattice of congruences over $L$.
\end{theorem}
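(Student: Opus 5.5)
We prove Theorem \ref{4.1} by exhibiting two mutually inverse, order-preserving maps, $\Phi:\mathcal{F}(L)\to Con(L)$ with $\Phi(F)=\theta_F$ and $\Psi:Con(L)\to\mathcal{F}(L)$ with $\Psi(\theta)=[1]_\theta$. An order isomorphism between two lattices is automatically a lattice isomorphism, so this suffices. The argument has four steps.

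\textbf{Step 1: $\Psi$ is well defined.} Let $\theta$ be a congruence and let $x,y\in[1]_\theta$.
\begin{itemize}
\item Since $x\,\theta\,1$ and $y\,\theta\,1$, compatibility with $\odot$ gives $x\odot y\;\theta\;1\odot 1=1$.
\item Suppose $x\in[1]_\theta$ and $x\le y$. Then $y=x\vee y\;\theta\;1\vee y=1$.
\end{itemize}
So $[1]_\theta$ is a filter.

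\textbf{Step 2: $\Phi$ is well defined.} We use the deductive-system characterization of filters together with $(R_1)$--$(R_7)$.
\begin{itemize}
\item Reflexivity and symmetry of $\theta_F$ are immediate from $(R_2)$.
\item For transitivity, $(R_6)$ and $(R_4)$ give $(x\to y)\odot(y\to z)\le x\to z$. Since $F$ is upward closed and closed under $\odot$, transitivity follows.
\item For compatibility, a standard simplification is to use $(x,y)\in\theta_F$ iff $x\to y,\ y\to x\in F$. This holds because $F$ is upward closed and $a\odot b\le a\wedge b$.
\item One then verifies the inequalities
\[
(x\to y)\le (x\odot z)\to(y\odot z),\qquad (x\to y)\le (x\wedge z)\to(y\wedge z),
\]
\[
(x\to y)\le (x\vee z)\to(y\vee z),\qquad (y\to x)\le (x\to z)\to(y\to z),\qquad (x\to y)\le (z\to x)\to(z\to y).
\]
All of these follow from residuation together with $(R_3)$, $(R_4)$, $(R_6)$.
\item Compatibility in each argument separately then yields full compatibility by transitivity.
\end{itemize}

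\textbf{Step 3: the maps are mutually inverse.}
\begin{itemize}
\item $\Psi\Phi=\mathrm{id}$: we have $x\in[1]_{\theta_F}$ iff $(x\to1)\odot(1\to x)=x\in F$, by $(R_1)$.
\item $\Phi\Psi=\mathrm{id}$: let $F=[1]_\theta$. If $x\,\theta\,y$, then $x\to y\;\theta\;y\to y=1$ and likewise $y\to x\;\theta\;1$. Multiplying gives $(x\to y)\odot(y\to x)\in F$, so $\theta\subseteq\theta_F$.
\end{itemize}

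\textbf{Step 4: the reverse inclusion and monotonicity.} The main obstacle is $\theta_F\subseteq\theta$. Suppose $a=x\to y$ and $b=y\to x$ are both $\theta$-related to $1$. Then
\[
x\wedge y=x\wedge(x\odot(x\to y))\;\theta\;x\wedge(x\odot 1)=x.
\]
The first equality holds because $x\odot(x\to y)\le x\wedge y$ by $(R_4)$ and $(R_5)$, and $x\wedge y\le x$. Symmetrically $x\wedge y\;\theta\;y$, so $x\,\theta\,y$.

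Finally, both maps are clearly monotone: $F\subseteq G$ implies $\theta_F\subseteq\theta_G$, and $\theta\subseteq\theta'$ implies $[1]_\theta\subseteq[1]_{\theta'}$. This completes the proof.
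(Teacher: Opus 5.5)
Your plan, mutually inverse monotone maps $F\mapsto\theta_F$ and $\theta\mapsto[1]_\theta$, is the standard argument; the paper gives no proof of its own and only cites Galatos et al. Steps 1--3 are fine, with one small slip: symmetry of $\theta_F$ comes from commutativity of $\odot$, not from $(R_2)$.

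The genuine gap is in Step 4, which is the real content of the theorem (1-regularity). Since $x\odot(x\to y)\le x$, the term $x\wedge\bigl(x\odot(x\to y)\bigr)$ is just $x\odot(x\to y)$. So your claimed equality $x\wedge y=x\wedge\bigl(x\odot(x\to y)\bigr)$ is the divisibility law $x\odot(x\to y)=x\wedge y$. That law holds in BL-algebras and Heyting algebras, but not in arbitrary residuated lattices. In the four-element nilpotent minimum chain $0<a<b<1$, with $b\odot b=b$ and $a\odot a=a\odot b=0$, one has $b\to a=a$ and $b\odot(b\to a)=0\neq a=b\wedge a$. Your justification only establishes the inequality $x\odot(x\to y)\le x\wedge y$, and taking the meet with $x$ does not turn this into an equality.

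The repair is to absorb that inequality with a join rather than a meet. Because $x\odot(x\to y)\le x\wedge y$, and using $x\to y\;\theta\;1$ together with compatibility of $\odot$ and $\vee$, you get
\[
x\wedge y=(x\wedge y)\vee\bigl(x\odot(x\to y)\bigr)\;\theta\;(x\wedge y)\vee(x\odot 1)=x .
\]
Symmetrically $x\wedge y\;\theta\;y$, hence $x\,\theta\,y$. Equivalently, $x\vee y=(x\odot 1)\vee y\;\theta\;\bigl(x\odot(x\to y)\bigr)\vee y=y$, and likewise $x\vee y\;\theta\;x$. With this change your proof is complete.
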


We now recall the notions of \emph{uniform structures} and \emph{uniform topologies}. We shall use the following notation. \\

Let $X$ be a set. We denote by $\Delta_{X}$ the diagonal
in $X\times X$, namely $\Delta_{X}=\{(x,x):x\in X\}\subset X\times X$. Suppose that $R$ is a subset of $X\times X$ (in other words, $R$ is a binary relation on $X$). For any $y\in X$, we define the set
$R[y]=\{x\in X:(x,y)\in R\}$. The \emph{inverse} $R^{-1}\subset X\times X$ of $R$ is defined by $R^{-1}=\{(x,y):(y,x)\in R\}$. One says that $R$ is symmetric if it satisfies $R^{-1}=R$. If $R$ and $S$ are subsets of
$X\times X$, we define their \emph{composite} $R\circ S\subset X\times X$ by $R\circ S=\{(x,y): \exists z\in X~ s.t.~(x,z)\in S~ and~ (z,y)\in R\}$.

\begin{definition}\label{2.5}\emph{\cite{Jam} Let $X$ be a set. A \emph{uniform structure} on $X$ is a non-empty set $\mathcal{U}$ of subsets of $X\times X$ satisfying the following conditions:
\begin{itemize}
 \item[\rm (1)] if $V\in \mathcal{U}$, then $\Delta_{X}\subseteq V$;
 \item[\rm (2)] if $V\in\mathcal{U}$ and $V\subseteq V'\subseteq X\times X$, then $V'\in \mathcal{U}$;
 \item[\rm (3)] if $V, W\in\mathcal{U}$, then $V\cap W\in\mathcal{U}$;
 \item[\rm (4)] if $V\in\mathcal{U}$, then $V^{-1}\in\mathcal{U}$;
 \item[\rm (5)] if $V\in \mathcal{U}$, then there exists $W\in \mathcal{U}$ such that $W\circ W\subseteq V$.
\end{itemize}}

\emph{A set $X$ equipped with a uniform structure $\mathcal{U}$ is called a \emph{uniform space} and the elements of $\mathcal{U}$ are called the \emph{entourages} of $X$. The topology associated with $\mathcal U$ is defined as follows: a subset $\Omega\subseteq X$ is open if, for each $x\in\Omega$, there exists an entourage $V\in\mathcal U$ such that $V[x]\subseteq \Omega$. Equivalently, a subset $N\subseteq X$ is a neighbourhood of $x$ if and only if there exists $V\in\mathcal U$ such that $V[x]\subseteq N$. This topology is Hausdorff if and only if the intersection of all entourages is the diagonal $\Delta_X$.}
\end{definition}

\begin{proposition}\label{2.6}\cite{Jam} Let $X$ be a set and let $\mathcal{B}$ be a nonempty set of subsets of $X\times X$. Then $\mathcal{B}$ is a base for some (necessarily unique) uniform structure on $X$ if and only
if it satisfies the following properties:
\begin{itemize}
 \item[\rm (a)] if $V\in \mathcal{B}$, then $\Delta_{X}\subset V$;
 \item[\rm (b)] if $V\in\mathcal{B}$ and $W\in\mathcal{B}$, then there exists $U\in\mathcal{B}$ such that $U\subset V\cap W$;
 \item[\rm (c)] if $V\in\mathcal{B}$, then there exists $W\in\mathcal{B}$ such that $W\subset V^{-1}$;
 \item[\rm (d)] if $V\in\mathcal{B}$, then there exists $W\in\mathcal{B}$ such that $W\circ W\subset V$.
\end{itemize}
\end{proposition}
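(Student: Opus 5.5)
We have to prove Proposition \ref{2.6}: a nonempty family $\mathcal{B}$ of subsets of $X\times X$ is a base for a (necessarily unique) uniform structure if and only if it satisfies (a)--(d). The plan is the standard one. Given $\mathcal{B}$, define
\[
\mathcal{U}_{\mathcal B}=\{V\subseteq X\times X:\ \exists B\in\mathcal B,\ B\subseteq V\}.
\]
Here "$\mathcal B$ is a base for $\mathcal U$" means $\mathcal B\subseteq\mathcal U$ and every member of $\mathcal U$ contains a member of $\mathcal B$. Uniqueness is then immediate: any uniform structure $\mathcal U$ with base $\mathcal B$ is upward closed by axiom (2) and contains $\mathcal B$, so $\mathcal U\supseteq\mathcal U_{\mathcal B}$. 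Conversely, every $V\in\mathcal U$ contains some $B\in\mathcal B$, so $\mathcal U\subseteq\mathcal U_{\mathcal B}$. Hence $\mathcal U=\mathcal U_{\mathcal B}$.

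\textbf{Necessity.} Suppose $\mathcal B$ is a base of a uniform structure $\mathcal U$. Each $B\in\mathcal B$ lies in $\mathcal U$, so $\Delta_X\subseteq B$ by axiom (1); this gives (a).
\begin{itemize}
\item For (b): given $V,W\in\mathcal B$, axiom (3) gives $V\cap W\in\mathcal U$, and $V\cap W$ contains some $U\in\mathcal B$.
\item For (c): axiom (4) gives $V^{-1}\in\mathcal U$, which contains a basic $W$.
\item For (d): axiom (5) gives $W'\in\mathcal U$ with $W'\circ W'\subseteq V$. Choose a basic $W\subseteq W'$. Composition is monotone in both arguments, so $W\circ W\subseteq W'\circ W'\subseteq V$.
\end{itemize}

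\textbf{Sufficiency.} Assume (a)--(d) and check that $\mathcal U_{\mathcal B}$ satisfies axioms (1)--(5). It is nonempty because $\mathcal B$ is nonempty and $\mathcal B\subseteq\mathcal U_{\mathcal B}$.
\begin{itemize}
\item Axiom (1): each $V\in\mathcal U_{\mathcal B}$ contains some $B\supseteq\Delta_X$, using (a).
\item Axiom (2): holds by construction, since $\mathcal U_{\mathcal B}$ is upward closed.
\item Axiom (3): if $B\subseteq V$ and $B'\subseteq W$, then (b) gives $U\in\mathcal B$ with $U\subseteq B\cap B'\subseteq V\cap W$.
\item Axiom (4): if $B\subseteq V$, then $B^{-1}\subseteq V^{-1}$. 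By (c) there is $W\in\mathcal B$ with $W\subseteq B^{-1}\subseteq V^{-1}$.
\item Axiom (5): if $B\subseteq V$, then (d) gives $W\in\mathcal B$ with $W\circ W\subseteq B\subseteq V$, and $W\in\mathcal U_{\mathcal B}$.
\end{itemize}
Finally, $\mathcal B$ is a base of $\mathcal U_{\mathcal B}$ by definition.

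There is no real obstacle; the argument is routine. The only points needing care are the two monotonicity facts: inversion preserves inclusion, and $R\subseteq R'$, $S\subseteq S'$ imply $R\circ S\subseteq R'\circ S'$. Both follow directly from the definitions of $R^{-1}$ and $R\circ S$ given above.
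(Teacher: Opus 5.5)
Your proof is correct and complete. It is the standard textbook argument: take $\mathcal U_{\mathcal B}$ to be the upward closure of $\mathcal B$, verify axioms (1)--(5), and get uniqueness from upward closure together with the base property. The paper gives no proof of this proposition and only cites it, so there is no alternative argument to compare against.

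You were right to read $\subset$ as non-strict inclusion throughout, which is also how the paper uses it in the proof of Theorem \ref{2.7}. With strict inclusion the statement would fail: $\{\Delta_X\}$ is a base for the discrete uniformity but does not satisfy (a).
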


A \emph{topological (universal) algebra} is an object $\mathbf{A}=(A,(f_{\alpha})_{\alpha\in I},\mathcal{T})$ where $A$ is a set, $(f_{\alpha})_{\alpha\in I}$ a family of maps $f_{\alpha}:A^{n_{\alpha}}\rightarrow A$, $n_{\alpha}$
the
\emph{arity} of the operation $f_{\alpha}$, and $\mathcal{T}$ a topology on $A$ such that each $f_{\alpha}$ maps the product space $(A,\mathcal{T})^{n_{\alpha}}$ continuously to the space $(A,\mathcal{T})$. $(A,(f_{\alpha})_{\alpha\in I})$ is
called the \emph{underlying algebra}, and $(A,\mathcal{T})$ the \emph{underlying space}. In what follows, we give a general result about topological algebras.\\

Recall that a poset $(P,\leq)$ is called a \emph{down-directed ordered set} if for every two different elements $a,b\in P$ there exists a lower bound of the two-element set $\{a,b\}$.
By Proposition \ref{2.6}, we have the following result, which is of great significance to study the topological algebras.

\begin{theorem}\label{2.7} Let $\mathbf{A}=(A,(f_{\alpha})_{\alpha\in I})$ be an algebra and $\mathscr{C}\subseteq Con(\mathbf{A})$. If $(\mathscr{C},\subseteq)$ is a down-directed set, then $\mathscr{C}$ is a base for a uniformity on $A$. The associated topology $\mathcal{T}$ makes $(A,(f_{\alpha})_{\alpha\in I},\mathcal{T})$ a topological algebra.
\end{theorem}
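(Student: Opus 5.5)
The plan is to verify the four conditions of Proposition \ref{2.6} for $\mathcal{B}=\mathscr{C}$, and then to check continuity of each $f_\alpha$ directly from the neighbourhood description in Definition \ref{2.5}. Throughout I tacitly assume $\mathscr{C}\neq\emptyset$, since Proposition \ref{2.6} requires a nonempty family. If $\mathscr{C}$ were empty, one would adjoin the total congruence $A\times A$, which yields the indiscrete topology.

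\textbf{Uniformity.} Each $\theta\in\mathscr{C}$ is an equivalence relation, and this settles three of the four conditions at once.
\begin{itemize}
\item Reflexivity gives $\Delta_A\subseteq\theta$, which is (a).
\item Symmetry gives $\theta^{-1}=\theta$, so (c) holds with $W=\theta$.
\item Transitivity gives $\theta\circ\theta\subseteq\theta$, so (d) holds with $W=\theta$.
\end{itemize}
For (b), take $\theta,\psi\in\mathscr{C}$. If $\theta\neq\psi$, down-directedness of $(\mathscr{C},\subseteq)$ yields $\chi\in\mathscr{C}$ with $\chi\subseteq\theta\cap\psi$. If $\theta=\psi$, take $\chi=\theta$; this trivial case is needed only because the paper's definition of down-directed is phrased for two different elements. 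Hence $\mathscr{C}$ is a base for a unique uniformity $\mathcal{U}$. In the associated topology $\mathcal{T}$, a set $N$ is a neighbourhood of $x$ iff $\theta[x]\subseteq N$ for some $\theta\in\mathscr{C}$, because every entourage contains a member of the base. In particular, each class $\theta[x]=x/\theta$ is itself a neighbourhood of $x$.

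\textbf{Continuity.} Fix an operation $f=f_\alpha$ of arity $n$ and a point $(a_1,\dots,a_n)\in A^n$. Nullary operations are constant maps from a one-point space and are trivially continuous, so assume $n\geq 1$. Let $N$ be a neighbourhood of $f(a_1,\dots,a_n)$, and choose $\theta\in\mathscr{C}$ with $\theta[f(a_1,\dots,a_n)]\subseteq N$. The product $\theta[a_1]\times\cdots\times\theta[a_n]$ is a neighbourhood of $(a_1,\dots,a_n)$ in the product topology, since each factor is a neighbourhood of its coordinate. Now let $x_i\in\theta[a_i]$ for every $i$. Because $\theta$ is a congruence, it is compatible with $f$, so $(f(x_1,\dots,x_n),f(a_1,\dots,a_n))\in\theta$. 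Therefore
\[
f(\theta[a_1]\times\cdots\times\theta[a_n])\subseteq\theta[f(a_1,\dots,a_n)]\subseteq N,
\]
and $f$ is continuous at $(a_1,\dots,a_n)$. Since the point was arbitrary, $f$ is continuous, and $(A,(f_\alpha)_{\alpha\in I},\mathcal{T})$ is a topological algebra.

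\textbf{Expected obstacle.} I expect no serious obstacle. The only delicate points are bookkeeping:
\begin{itemize}
\item translating between the base $\mathscr{C}$ and the generated uniformity when identifying neighbourhoods;
\item the direction convention $R[y]=\{x:(x,y)\in R\}$, which is harmless because congruences are symmetric;
\item the degenerate cases (empty $\mathscr{C}$, equal elements in down-directedness, nullary operations).
\end{itemize}
The substantive content is simply that the congruence property of $\theta$ is exactly uniform continuity of each $f_\alpha$ with respect to the base $\mathscr{C}$.
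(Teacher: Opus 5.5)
Your proposal is correct and follows the paper's proof essentially step for step. Conditions (a)--(d) of Proposition \ref{2.6} come from reflexivity, symmetry, down-directedness and $\theta\circ\theta\subseteq\theta$, and continuity of each $f_\alpha$ comes from the inclusion $f(\theta[a_1]\times\cdots\times\theta[a_n])\subseteq\theta[f(a_1,\ldots,a_n)]$ given by the congruence property. Your extra care with the degenerate cases (nonemptiness of $\mathscr{C}$, the case $\theta=\psi$ under the paper's ``two different elements'' definition, and nullary operations) is a harmless refinement that the paper leaves implicit.
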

\begin{proof}
Each congruence in $\mathscr C$ contains the diagonal and is symmetric. Since $(\mathscr C,\subseteq)$ is down-directed, for $\theta,\psi\in\mathscr C$ there exists $\eta\in\mathscr C$ such that $\eta\subseteq\theta\cap\psi$. Finally, $\theta\circ\theta=\theta$ for every equivalence relation $\theta$, so condition (d) of Proposition \ref{2.6} is also satisfied. Thus $\mathscr C$ is a base for a uniformity on $A$.

Let $f=f_\alpha$ have arity $n$. Fix $a=(a_1,\ldots,a_n)\in A^n$ and let $\theta\in\mathscr C$. Since $\theta$ is a congruence,
\[
 f(\theta[a_1]\times\cdots\times\theta[a_n])\subseteq \theta[f(a_1,\ldots,a_n)].
\]
The sets $\theta[b]$ $(\theta\in\mathscr C)$ form a neighbourhood base at each $b\in A$. Hence $f$ is continuous at $a$. Since $a$ and $f$ were arbitrary, all fundamental operations are continuous.
\end{proof}

The variety $\mathcal{RL}$ of residuated lattices is arithmetical. It has the congruence extension property (CEP), and is congruence 1-regular, i.e., for any congruence $\theta$, the coset of $1$ uniquely determines $\theta$ \cite[Chapter 2, pp. 86--89]{Galatos}. Furthermore, the congruences of a residuated lattice are completely determined by filters (see Theorem \ref{4.1} or \cite[Theorem 2.3.8]{Galatos}). Thus, in \cite{Yang1}, we studied topological residuated lattices induced by a system of filters. By Theorem \ref{2.7}, it follows that a residuated lattice endowed with the topology induced by a system of filters is a topological residuated lattice, see \cite[Theorem 3.13]{Yang1}.\\

A topological space $(X,\mathcal{T})$ is a \emph{zero-dimensional space} if $\mathcal{T}$ has a base which consists of clopen (i.e., open-and-closed) subsets.
Let $\mathcal{T}$ and $\mathcal{T}'$ be two topologies on a given set $X$. If $\mathcal{T}\subseteq\mathcal{T}'$, then we say that $\mathcal{T}'$ \emph{finer than} $\mathcal{T}$.

\begin{definition}\label{2.1}\emph{\cite{Yang1} Let $\mathcal{F}$ be a family of filters of a residuated lattice $L$. Then $\mathcal{F}$ is called a \emph{system of filters} of $L$ if $(\mathcal{F},\subseteq)$ is
a down-directed set.}
\end{definition}

 A topology $\tau$ of a residuated lattice $L$ is called a \emph{linear topology} on $L$ if there exists a base $\beta$ for $\tau$ such that, for all $B\in\beta$, $B$ is a filter if and only if $1\in B$.

If $(\mathcal{F},\subseteq)$ is a system of filters of a residuated lattice $L$, then, by Proposition \ref{2.6} or \cite[Theorem 3.13]{Yang1}, $$\mathcal{T}_{\mathcal{F}}=\{U\subseteq L:\forall x\in U,\exists F\in\mathcal{F}~such~that~x/F\subseteq U\}$$
is a linear topology on $L$. We call it the \emph{linear topology induced by $\mathcal{F}$}. Also, $(L,\mathcal{T}_{\mathcal{F}})$ is a topological residuated lattice.


\section{Profinite residuated lattices}

In this section, we study profinite residuated lattices and give several characterizations of profiniteness in compact topological residuated lattices.\\

Let $I=(I,\leq)$ denote a \emph{directed partially ordered set} or \emph{directed poset}, that is, $I$ is a set with a binary relation $\leq$ such that $(I,\leq)$ is a poset and if $i,j\in I$, there exists some $k\in I$ such that
$i,j\leq k$.
\begin{definition}\label{3.1}\emph{\cite{Gratzer1}
(i) By an \emph{inverse (or projective) system} in a category $\mathcal{D}$ we mean a family $\{B_{i},\pi_{ij},I\}$ of objects, indexed by a directed poset $I$, with a family of morphisms $\pi_{ij}: B_{i}\rightarrow B_{j}$, for any $j\leq i$, satisfying the following conditions:
\begin{itemize}
\item[\rm (1)] $\pi_{ik}=\pi_{jk}\circ \pi_{ij}$, for any $k\leq j\leq i$;
\item[\rm (2)] $\pi_{ii}=id_{B_{i}}$, for any $i\in I$.
\end{itemize}
For brevity we say that $\{B_{i},\pi_{ij},I\}$ is an inverse system in $\mathcal{D}$.}

\emph{(ii) The \emph{inverse limit} of an inverse system $\{B_{i},\pi_{ij},I\}$ in a category $\mathcal{D}$ is an object $B$ of $\mathcal{D}$ together with a family $\{\phi_{i}:B\rightarrow B_{i}\}_{i\in I}$
of morphisms (which is often denoted by $\{B,\phi_{i}\}_{i\in I}$) satisfying the conditions:
\begin{itemize}
\item[\rm (1)] $\pi_{ij}\circ\phi_{i}=\phi_{j}$, for any $i,j\in I, j\leq i$(this condition often is called \emph{compatible condition});
\item[\rm (2)] for any object $B'$ of $\mathcal{D}$, together with a family of morphisms $\lambda_{i}: B'\rightarrow B_{i}, i\in I$, if $\pi_{ij}\circ\lambda_{i}=\lambda_{j}$, for any $i,j\in I, j\leq i$ then there exists a unique morphism $\lambda:B'\rightarrow B$ such that $\phi_{i}\circ\lambda=\lambda_{i}$, for any $i\in I$.
 \end{itemize}
    The inverse limit of the above system is denoted by $\lim\limits_{\longleftarrow}B_{i}$.}
\end{definition}

Recall from Gr\"atzer \cite{Gratzer1} that the inverse limits of families of algebras are constructed in the following way.
\begin{theorem}\cite{Gratzer1}\label{3.2}
 Let $\{B_{i},\pi_{ij},I\}$ be an inverse system of same algebras, $\prod_{i\in I}B_{i}$ be its product and $\pi_{j}:\prod_{i\in I}B_{i}\rightarrow B_{j}$ is defined by $\pi_{j}((x_{i})_{i\in I})=x_{j}$ for any $j\in I$. Let $$B=\{(b_{i})_{i\in I}\in\prod_{i\in I}B_{i}:\pi_{ij}(b_{i})=b_{j},j\leq i\}.$$
 Then $B$ is a subalgebra of $\prod_{i\in I}B_{i}$ and $\{B,\phi_{i}\}_{i\in I}$ is the inverse limit of $\{B_{i},\pi_{ij},I\}$, where $\phi_{i}=\pi_{i}\upharpoonright_{B}$ for any $i\in I$.
\end{theorem}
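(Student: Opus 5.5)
We verify the two claims directly: first that $B$ is closed under all fundamental operations of $\prod_{i\in I}B_{i}$, and then that $\{B,\phi_{i}\}_{i\in I}$ has the universal property of Definition \ref{3.1}(ii). Throughout, each $\pi_{ij}$ is a homomorphism of algebras of the same type, and operations on $\prod_{i\in I}B_{i}$ are computed coordinatewise.

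\textbf{$B$ is a subalgebra.} Let $f$ be a fundamental operation of arity $n$ and let $b^{1},\ldots,b^{n}\in B$, with $b^{k}=(b^{k}_{i})_{i\in I}$. For $j\leq i$ we compute
\[
\pi_{ij}\bigl(f(b^{1}_{i},\ldots,b^{n}_{i})\bigr)=f\bigl(\pi_{ij}(b^{1}_{i}),\ldots,\pi_{ij}(b^{n}_{i})\bigr)=f(b^{1}_{j},\ldots,b^{n}_{j}).
\]
Hence $f(b^{1},\ldots,b^{n})\in B$. Nullary operations such as $0$ and $1$ are handled in the same way, because homomorphisms preserve constants. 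So $B$ is nonempty and closed under every operation, and is therefore a subalgebra. Each $\phi_{i}=\pi_{i}\upharpoonright_{B}$ is a homomorphism, being the restriction of a projection. The compatibility condition $\pi_{ij}\circ\phi_{i}=\phi_{j}$ for $j\leq i$ is exactly the defining condition of $B$.

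\textbf{Universal property.} Let $B'$ be an algebra with homomorphisms $\lambda_{i}:B'\to B_{i}$ satisfying $\pi_{ij}\circ\lambda_{i}=\lambda_{j}$ for $j\leq i$.

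For existence, define $\lambda(x)=(\lambda_{i}(x))_{i\in I}$. The compatibility of the $\lambda_{i}$ places $\lambda(x)$ in $B$. The map $\lambda$ is a homomorphism because operations are computed coordinatewise and each $\lambda_{i}$ is a homomorphism. By construction $\phi_{i}\circ\lambda=\lambda_{i}$.

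For uniqueness, suppose $\mu:B'\to B$ also satisfies $\phi_{i}\circ\mu=\lambda_{i}$ for all $i$. Then every coordinate of $\mu(x)$ equals $\lambda_{i}(x)$. Since an element of a product is determined by its coordinates, $\mu=\lambda$.

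No step is a genuine obstacle. The only point requiring care is that the category is that of algebras of a fixed type, or of a variety such as $\mathcal{RL}$. One must then note that $B$ lies in the category. For a variety this holds because varieties are closed under products and subalgebras, so $B$ is again a residuated lattice.
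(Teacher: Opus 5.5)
Your proposal is correct. The paper gives no proof of its own and simply cites Gr\"atzer; your argument is the standard one behind that citation: check closure under the operations coordinatewise, get existence from the map $x\mapsto(\lambda_i(x))_{i\in I}$, and get uniqueness because an element of a product is determined by its coordinates. The point you already flag deserves emphasis: nonemptiness of $B$ comes from the constants $0,1$. For a type without constants $B$ can be empty, e.g.\ the sets $\{n,n+1,\ldots\}$ with inclusion maps, so the statement for arbitrary ``same algebras'' needs constants or a convention that admits empty algebras.
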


Let $A$ be an algebra. Let $I$ denote the set of all congruences $\theta$ on $A$ such that $A/\theta$ is finite. We denote the image of $a\in A$ in $A/\theta$ by $[a]_{\theta}$. If $\theta\subseteq\theta'$, then there is a canonical projection $\varphi_{\theta'\theta}:A/\theta'\rightarrow A/\theta$ given by $\varphi_{\theta'\theta}([a]_{\theta'})=[a]_{\theta}$. Then $(I,\supseteq)$ is a directed set, and
$(A/\theta,\varphi_{\theta'\theta},I)$ is an inverse system of algebras. The \emph{profinite completion $\widehat{A}$} of $A$ is the inverse limit of $(A/\theta,\varphi_{\theta'\theta},I)$. By \cite{Gratzer1}, we may identify $\widehat{A}$ with the subalgebra of $\prod_{\theta\in I}A/\theta$ consisting of all $([a]_{\theta})_{\theta\in I}$ for which $\varphi_{\theta'\theta}([a]_{\theta'})=[a]_{\theta}$ whenever
$\theta\subseteq\theta'$. We define the canonical homomorphism $e_{A}:A\rightarrow \widehat{A}$ by $e_{A}(a)=([a]_{\theta})_{\theta\in I}$. Let $\pi_{\theta'}:\prod_{\theta\in I}A/\theta\rightarrow A/\theta'$ denote the projection map. We also denote the restriction of $\pi_{\theta'}$ to $\widehat{A}$ by $\pi_{\theta'}$.
One would expect a completion of an algebra $A$ to be an extension of $A$, however, the canonical homomorphism $e_{A}:A\rightarrow \widehat{A}$ is not always an embedding.

Let $(I,\leq)$ be a directed poset. Assume that $I'$ is a subset of $I$ in such a way that $(I',\leq)$ becomes a directed poset. We say that $I'$ is \emph{cofinal} in $I$ if for every $i\in I$ there is some $i'\in I'$ such that $i\leq i'$. If $\{X_{i},\varphi_{ij}, I\}$ is an inverse system and $I'$ is cofinal in $I$, then $\{X_{i},\varphi_{ij}, I'\}$ becomes an inverse system in an obvious way, and we say that $\{X_{i}, \varphi_{ij},I'\}$
is a cofinal subsystem of $\{X_{i},\varphi_{ij},I\}$.
\begin{proposition}\label{3.3}
Let $\{A_{i},\varphi_{ij},I\}$ be an inverse system of similar algebras over a directed poset $I$ and assume that $I'$ is a cofinal subset of $I$. Then $$\varprojlim_{i\in I}X_{i}\cong\varprojlim_{i'\in I'}X_{i'}.$$
\end{proposition}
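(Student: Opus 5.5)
We prove the cofinality statement of Proposition \ref{3.3} by comparing the concrete realizations of both limits from Theorem \ref{3.2}. (The statement writes $X_i$ for the objects called $A_i$.) Let
\[
A=\{(a_i)_{i\in I}\in\textstyle\prod_{i\in I}A_i:\varphi_{ij}(a_i)=a_j \text{ for } j\le i\}
\]
and let $A'$ be the analogous subalgebra of $\prod_{i'\in I'}A_{i'}$. Define $\rho:A\to A'$ by restriction of the index set, $\rho((a_i)_{i\in I})=(a_{i'})_{i'\in I'}$. Then $\rho$ is well defined, because the compatibility conditions on $I'$ are among those on $I$. It is a homomorphism, because operations in subalgebras of products are computed coordinatewise.

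To show that $\rho$ is injective, suppose $\rho(a)=\rho(b)$ and let $i\in I$. By cofinality we can pick $i'\in I'$ with $i\le i'$. Then
\[
a_i=\varphi_{i'i}(a_{i'})=\varphi_{i'i}(b_{i'})=b_i,
\]
so $a=b$.

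For surjectivity, take $(c_{i'})\in A'$. For each $i\in I$, choose $i'\in I'$ with $i\le i'$ and set $a_i=\varphi_{i'i}(c_{i'})$.

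The main point to check is that $a_i$ does not depend on the choice of $i'$. Suppose $i\le i'$ and $i\le i''$ with $i',i''\in I'$. Since $I'$ is directed, there is $k\in I'$ with $i',i''\le k$. Condition (1) of Definition \ref{3.1} gives
\[
\varphi_{i'i}(c_{i'})=\varphi_{i'i}\varphi_{ki'}(c_k)=\varphi_{ki}(c_k),
\]
and the same computation through $i''$ gives $\varphi_{i''i}(c_{i''})=\varphi_{ki}(c_k)$. So the two choices agree.

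Next we check that $(a_i)$ lies in $A$. Let $j\le i$ and pick $i'\in I'$ with $i\le i'$. Then $j\le i'$ as well, so by independence of the choice,
\[
\varphi_{ij}(a_i)=\varphi_{ij}\varphi_{i'i}(c_{i'})=\varphi_{i'j}(c_{i'})=a_j.
\]
Finally, for $i\in I'$ we may choose $i'=i$, and condition (2) of Definition \ref{3.1} gives $a_i=c_i$. Hence $\rho((a_i))=(c_{i'})$.

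Therefore $\rho$ is a bijective homomorphism, and hence an isomorphism of algebras. Alternatively, one can check through the universal property in Definition \ref{3.1}(ii) that $A$, with the projections $\phi_{i'}$ for $i'\in I'$, satisfies the universal property for the subsystem. Extending a compatible cone over $I'$ to $I$ uses the same well-definedness argument, so the explicit description above suffices.
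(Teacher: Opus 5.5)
Your proof is correct and takes essentially the paper's approach: both arguments show that restriction to $I'$ and extension via $a_i=\varphi_{i'i}(c_{i'})$ are mutually inverse. The paper phrases this through the extension map $\overline{\varphi}$, built from $\varphi_{j'j}\circ\varphi'_{j'}$, while you use its inverse, the restriction map $\rho$, and you write out the independence-of-choice check that the paper only asserts with ``observe''.
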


\begin{proof}
Assume that $\{A_{i},\varphi_{ij},I'\}$ is a cofinal subsystem of $\{A_{i},\varphi_{ij},I\}$ and denote by $(\varprojlim_{i'\in I'},\varphi_{i'}')$ and $(\varprojlim_{i\in I},\varphi_{i})$ their corresponding inverse limits, respectively. For any $j\in I$, let $j'\in I'$ be such that $j\leq j'$. Define $$\overline{\varphi}_{j}:\varprojlim_{i'\in I'}X_{i'}\rightarrow X_{j}$$
as the composition of canonical mappings $\varphi_{j'j},\varphi_{j'}'$. Observe that the maps $\overline{\varphi}_{j}$ are well-defined and compatible. Hence they induce a map
$$\overline{\varphi}:\varprojlim_{i'\in I'}X_{i'}\rightarrow \varprojlim_{i\in I}X_{i}$$
such that $\varphi_{j}\overline{\varphi}=\overline{\varphi}_{j}(j\in I)$. We claim that the mapping $\overline{\varphi}$ is a bijection. Note that if $(x_{i'})\in \varprojlim_{i'\in I'}X_{i'}$ and
$\overline{\varphi}(x_{i'})=(y_{i})$, then $y_{i'}=x_{i'}$ for $i'\in I'$. Since $I'$ is cofinal in $I$, thus $\overline{\varphi}$ is an injection. To see that $\overline{\varphi}$ is a surjection, let $(y_{i})\in \varprojlim_{i\in I}X_{i}$ and consider the element $(x_{i'})$, where $x_{i'}=y_{i'}$ for every $i'\in I'$. Then $(x_{i'})\in \varprojlim_{i'\in I'}X_{i'}$ and obviously, $\overline{\varphi}(x_{i'})=(y_{i})$.
This proves the claim. The rest of the proof is obvious.
\end{proof}

In what follows we shall be specifically interested in the topological space $X$ that arises as inverse limit
\[X=\lim_{\substack{\longleftarrow\\ i\in I}}X_{i}\]
of finite spaces $X_{i}$ endowed with the discrete topology. We call such a space \emph{profinite space} or a \emph{Boolean space}. A topological space is \emph{totally disconnected} if every point in the space is its own connected component.

\begin{theorem}\label{3.4}\cite{Ribes}
Let $X$ be a topological space. Then the following conditions are equivalent.
\begin{enumerate}
  \item[(a)] $X$ is a profinite space;
  \item[(b)] $X$ is compact Hausdorff and totally disconnected;
  \item[(c)] $X$ is compact Hausdorff and admits a base of clopen sets for its topology.
\end{enumerate}
\end{theorem}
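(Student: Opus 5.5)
We prove the cycle (a)$\Rightarrow$(b)$\Rightarrow$(c)$\Rightarrow$(a).

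For (a)$\Rightarrow$(b), write $X=\varprojlim X_i$ and realize it, as in Theorem \ref{3.2}, as the subset of $\prod_{i\in I}X_i$ consisting of compatible families. A product of finite discrete spaces is compact by Tychonoff's theorem and Hausdorff. It is also totally disconnected, because any two distinct points differ in some coordinate, and the preimage of a point under a projection is clopen. The subspace $X$ is closed, since it is the intersection of the sets $\{x:\pi_{ij}(x_i)=x_j\}$, each of which is closed as an equalizer of continuous maps into a discrete, hence Hausdorff, space. Therefore $X$ is compact Hausdorff, and total disconnectedness passes to subspaces.

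For (b)$\Rightarrow$(c), this is the step we expect to be the main obstacle. Fix $x\in U$ with $U$ open. In a compact Hausdorff space the connected component of $x$ equals its quasi-component, namely the intersection $Q$ of all clopen sets containing $x$. We prove this standard lemma first. Suppose $Q=A\sqcup B$ with $A,B$ closed and $x\in A$. By normality choose disjoint open sets $V\supseteq A$ and $W\supseteq B$. By compactness, finitely many clopen sets containing $x$ have intersection $C\subseteq V\cup W$. Then $C\cap V$ is clopen and contains $x$, so $Q\subseteq C\cap V$, which forces $B=\emptyset$. Hence $Q$ is connected, and so $Q=\{x\}$. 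Now $X\setminus U$ is compact and is covered by the complements of the clopen sets containing $x$. Taking a finite subcover and intersecting the corresponding clopen sets gives a clopen set $C$ with $x\in C\subseteq U$. This shows the clopen sets form a base.

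For (c)$\Rightarrow$(a), let $I$ be the set of finite partitions of $X$ into clopen sets, ordered by refinement. This poset is directed, because common refinements of two finite clopen partitions exist. Each partition $P$ gives a finite discrete space $X_P=P$ with the quotient map $q_P:X\to P$, and refinement gives the bonding maps. The maps $q_P$ are continuous and compatible, so they induce a continuous map $q:X\to\varprojlim X_P$.

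We check that $q$ is a homeomorphism. It is injective because, by Hausdorffness and the clopen base, two distinct points are separated by some partition $\{C,X\setminus C\}$. It is surjective because a compatible family $(C_P)_P$ consists of nonempty closed sets with the finite intersection property, so by compactness there is a point in $\bigcap_P C_P$, and that point maps to the family. Finally, $q$ is a continuous bijection from a compact space onto a Hausdorff space, so it is a homeomorphism.
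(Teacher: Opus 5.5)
Your proof is correct: each implication is argued soundly. This includes the key step in (b)$\Rightarrow$(c), that the intersection of all clopen sets containing $x$ is connected in a compact Hausdorff space, and the finite-intersection/compactness argument for surjectivity in (c)$\Rightarrow$(a). The paper gives no proof of its own and simply cites Ribes--Zalesskii; your argument matches the standard one: quasi-components equal components for (b)$\Rightarrow$(c), and the inverse system of finite clopen partitions (equivalently, clopen equivalence relations) for (c)$\Rightarrow$(a).
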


\begin{definition}\label{3.5}
\emph{A topological residuated lattice $(L,\tau)$ is called \emph{profinite} if it is topologically isomorphic to the inverse limit of an inverse system of finite discrete residuated lattices. When no topology is specified, a residuated lattice $L$ is called algebraically profinite if it is isomorphic, as a residuated lattice, to such an inverse limit equipped with its profinite topology. The class of algebraically profinite residuated lattices is denoted by $\mathbf{Pro}\mathcal{RL}$.}
\end{definition}

Thus, we see that if we are given a profinite algebra $A\cong\varprojlim_{i\in I}A_{i}$, then $A$ is a topological algebra in its profinite topology, which can be defined in a natural way using the limiting cone
$(\pi_{i}:A\rightarrow A_{i})_{i\in I}$. We will now record the fundamental fact that every profinite topology is Boolean, i.e., every profinite topology is compact Hausdorff and zero-dimensional.
\begin{proposition}\label{3.6}\cite{Banaschewski}
If $A\cong\varprojlim_{i\in I}A_{i}$ is a profinite algebra, then $A$ is a Boolean topological algebra in its profinite topology.
\end{proposition}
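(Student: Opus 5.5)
The plan is to realize $A\cong\varprojlim_{i\in I}A_i$ as a subspace of the product $P=\prod_{i\in I}A_i$, where each $A_i$ is a finite discrete algebra. By Theorem \ref{3.2}, $A$ may be identified with
$$B=\{(b_i)_{i\in I}\in P:\ \pi_{ij}(b_i)=b_j \text{ for } j\le i\}.$$
The profinite topology is the one induced by the limiting cone $(\pi_i\!\upharpoonright_B)_{i\in I}$. This is exactly the subspace topology that $B$ inherits from the product topology on $P$. So it suffices to show that $B$ with this topology is a topological algebra, and that it is compact, Hausdorff and zero-dimensional.

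First I treat the topology of the product. Each $A_i$ is finite and discrete, hence compact Hausdorff, and its singletons are clopen. By Tychonoff's theorem $P$ is compact. As a product of Hausdorff spaces it is Hausdorff. The sets $\pi_i^{-1}(\{a\})$ are clopen in $P$, and their finite intersections form a base of clopen sets. Therefore $P$ is zero-dimensional.

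Next I show that $B$ is closed in $P$. For fixed $j\le i$, the set
$$B_{ij}=\{x\in P:\pi_{ij}(\pi_i(x))=\pi_j(x)\}$$
is the set where the two continuous maps $\pi_{ij}\circ\pi_i$ and $\pi_j$ into the Hausdorff (discrete) space $A_j$ agree. Such an equalizer is closed. Equivalently, $B_{ij}$ is a union of basic clopen sets indexed by the finitely many pairs $(a,\pi_{ij}(a))$. Hence $B=\bigcap_{j\le i}B_{ij}$ is closed. A closed subspace of a compact space is compact, so $B$ is compact. The Hausdorff property and the clopen base both pass to subspaces, so $B$ is Boolean. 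By Theorem \ref{3.4} it is then also a profinite space, which is consistent with the statement.

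Finally I check continuity of the operations. Each operation $f$ of arity $n$ on $P$ acts coordinatewise, so $\pi_i\circ f=f_i\circ(\pi_i\times\cdots\times\pi_i)$. Here $f_i$ is continuous because $A_i^n$ is discrete. Hence $f:P^n\to P$ is continuous, by the universal property of the product topology. Since $B$ is a subalgebra (Theorem \ref{3.2}), the restriction $f:B^n\to B$ is continuous for the subspace topologies.

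An alternative route is Theorem \ref{2.7}. The kernels $\ker(\pi_i\!\upharpoonright_B)$ form a down-directed family of congruences: given $i,j$, take $k\ge i,j$, and then $\ker\pi_k\subseteq\ker\pi_i\cap\ker\pi_j$ by the compatibility condition. These kernels generate exactly the profinite topology.

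The only step needing care is closedness of $B$, since compactness depends on it. This is a routine equalizer argument and not a real obstacle. The isomorphism $A\cong B$ transfers everything, because the profinite topology on $A$ is by definition the one transported along that isomorphism.
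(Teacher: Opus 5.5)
Your proof is correct and takes essentially the route the paper indicates. The paper itself cites Banaschewski and only sketches the argument in Remark~\ref{3.7}: identify the limit with a closed subalgebra of the product of finite discrete algebras, then inherit compactness, Hausdorffness, a clopen base and continuity of the operations from the product, with your equalizer argument for closedness doing the job of the separating-neighbourhood argument in Lemma~\ref{3.8a}(ii).
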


\begin{remark}\label{3.7}
We have chosen to first describe profinite algebras using universal algebra, i.e., as the limit of a diagram of finite algebras $\{A_{i},\varphi_{ij},I\}$, and then to indicate that one can define a topology on
$\varprojlim_{i\in I}A_{i}$, making $\varprojlim_{i\in I}A_{i}$ a topological algebra. Alternatively, one can start by endowing each algebra $A_{i}$ with the discrete topology, and then show that one can also take limits of topological algebras, so that it follows immediately that $\varprojlim_{i\in I}A_{i}$ is a topological algebra. One can then show that $\varprojlim_{i\in I}A_{i}$ is a closed subalgebra of $\prod_{i\in I}A_{i}$, so that it
follows from general topology that the profinite topology on $\varprojlim_{i\in I}A_{i}$ is a Boolean topology.
\end{remark}

We begin with a Baire-type property of profinite spaces. We deliberately do not use the group-theoretic cardinality dichotomy for profinite groups: a profinite residuated lattice need not be topologically homogeneous, and countably infinite profinite residuated lattices may occur.
\begin{proposition} \label{3.8}
Let $L$ be a profinite topological residuated lattice. If $C_{1},C_{2},\ldots$ are nonempty closed subsets of $L$ with empty interior, then
\[
L\neq\bigcup_{i=1}^{\infty}C_i.
\]
\end{proposition}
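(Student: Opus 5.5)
This is the Baire category theorem for compact Hausdorff spaces, applied to the underlying space of $L$. The plan has two steps: first place $L$ in the class of compact Hausdorff spaces, then run the standard nested-closed-sets argument, where compactness replaces completeness.

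\emph{Step 1.} By Proposition \ref{3.6} (equivalently Theorem \ref{3.4}), a profinite topological residuated lattice is compact, Hausdorff and zero-dimensional. Since this is a topological statement, topological isomorphism lets us work directly on $\varprojlim L_i$, a closed subspace of the compact product $\prod_i L_i$ of finite discrete spaces. Compact Hausdorff spaces are regular, and it is regularity together with compactness that we use. Zero-dimensionality even lets us choose clopen neighbourhoods, which makes the shrinking step explicit.

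\emph{Step 2.} Suppose, for a contradiction, that $L=\bigcup_{i\ge 1} C_i$. We build a decreasing sequence of nonempty clopen sets $U_0\supseteq U_1\supseteq\cdots$ with $U_n\cap C_n=\emptyset$ for $n\ge 1$.
\begin{itemize}
\item Start with $U_0=L$.
\item Given a nonempty open $U_{n-1}$, note that $U_{n-1}\not\subseteq C_n$, because $C_n$ has empty interior. Hence $U_{n-1}\setminus C_n$ is a nonempty open set.
\item Pick a point $x$ in $U_{n-1}\setminus C_n$. By zero-dimensionality there is a clopen basic set $U_n$ with $x\in U_n\subseteq U_{n-1}\setminus C_n$. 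This uses only that the clopen sets form a base, so no regularity computation with closures is needed.
\end{itemize}
The $U_n$ are closed, nonempty and nested, so they have the finite intersection property. By compactness, $\bigcap_n U_n\neq\emptyset$. Any point $y$ of this intersection lies in no $C_n$, since $U_n\cap C_n=\emptyset$ for every $n$. This contradicts $L=\bigcup_i C_i$.

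\emph{Main obstacle.} Step 2 relies on the sequential choice of the $U_n$ (dependent choice), which is available in $\mathrm{ZFC}$. The only genuine point to check is that Proposition \ref{3.6} applies to the topology $\tau$ in the definition of a profinite residuated lattice. This holds because a topological isomorphism transports compactness and the clopen base from $\varprojlim L_i$ to $(L,\tau)$.
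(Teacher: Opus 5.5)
Your proposal is correct and follows essentially the same argument as the paper: both use the clopen base from Theorem \ref{3.4} to pick nested nonempty clopen sets $U_n\subseteq U_{n-1}\setminus C_n$, then use compactness to get a point lying in no $C_n$. The only difference is that the paper phrases the step through the dense open sets $D_n=L\setminus C_n$. Your aside about regularity is unnecessary, since only compactness and the clopen base are used.
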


\begin{proof}
Assume that $L=\bigcup_{i=1}^{\infty}C_i$, where each $C_i$ is a nonempty closed subset of $L$ with empty interior. Then $D_i=L\setminus C_i$ is open and dense for every $i$. Let $U_0$ be a nonempty open subset of $L$. Since $D_1$ is dense, $U_0\cap D_1$ is a nonempty open set. By Theorem \ref{3.4}, profinite spaces have a base of clopen sets; hence there exists a nonempty clopen set $U_1\subseteq U_0\cap D_1$. Inductively, after $U_{n-1}$ has been chosen, the set $U_{n-1}\cap D_n$ is nonempty and open, so choose a nonempty clopen set
\[
U_n\subseteq U_{n-1}\cap D_n.
\]
Thus $U_1\supseteq U_2\supseteq\cdots$ is a decreasing sequence of nonempty closed subsets of the compact space $L$. Hence $\bigcap_{n\geq1}U_n\neq\emptyset$. But
\[
\bigcap_{n\geq1}U_n\subseteq\bigcap_{n\geq1}D_n=L\setminus\bigcup_{n\geq1}C_n=\emptyset,
\]
a contradiction.
\end{proof}

\begin{remark}\label{3.8b}
No statement of the form ``a profinite residuated lattice is either finite or uncountable'' is used in this paper. Such a statement is false in this generality. For instance, let $C_n=\{0,1,\ldots,n\}$ be the finite G\"odel chain with top element $n$ and let $p_{n+1,n}:C_{n+1}\to C_n$ be given by $p_{n+1,n}(k)=\min\{k,n\}$. These maps are homomorphisms of residuated lattices, and the inverse limit is countably infinite.
\end{remark}

\begin{lemma}\label{3.8a}
Let $\{L_i,\varphi_{ij},I\}$ be an inverse system of topological residuated lattices. Then:
\begin{enumerate}
  \item[(i)] the inverse limit $\underleftarrow{\lim}\{L_i:i\in I\}$ is a subalgebra of $\prod_{i\in I}L_i$;
  \item[(ii)] if each $L_i$ is Hausdorff, then the inverse limit is closed in $\prod_{i\in I}L_i$;
  \item[(iii)] if each $L_i$ is non-empty, compact and Hausdorff, then the inverse limit is non-empty.
\end{enumerate}
\end{lemma}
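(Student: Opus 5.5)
We write $L=\underleftarrow{\lim}\{L_i:i\in I\}$ and identify it, as in Theorem \ref{3.2}, with
\[
L=\{(x_i)_{i\in I}\in\textstyle\prod_{i\in I}L_i:\ \varphi_{ij}(x_i)=x_j \text{ for all } j\leq i\}.
\]
The plan is to treat the three parts separately, using only that each $\varphi_{ij}$ is a continuous homomorphism and the standard facts on products.

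For (i), we check closure under the operations coordinatewise. The constants $0,1$ lie in $L$, since each $\varphi_{ij}$ preserves $0$ and $1$. Let $\ast\in\{\wedge,\vee,\odot,\rightarrow\}$ and take $(x_i),(y_i)\in L$. Then
\[
\varphi_{ij}(x_i\ast y_i)=\varphi_{ij}(x_i)\ast\varphi_{ij}(y_i)=x_j\ast y_j ,
\]
so $(x_i\ast y_i)\in L$. This is essentially Theorem \ref{3.2}.

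For (ii), we write $L=\bigcap_{j\leq i}E_{ij}$, where
\[
E_{ij}=\{x\in\textstyle\prod L_k:\ \varphi_{ij}(\pi_i(x))=\pi_j(x)\}.
\]
Each $E_{ij}$ is the equalizer of the two continuous maps $\varphi_{ij}\circ\pi_i$ and $\pi_j$ into the Hausdorff space $L_j$. Hence $E_{ij}$ is the preimage of the closed diagonal $\Delta_{L_j}$ under the continuous map $(\varphi_{ij}\circ\pi_i,\pi_j)$, and so it is closed. An intersection of closed sets is closed, so $L$ is closed.

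For (iii), we use compactness. By Tychonoff, $\prod L_i$ is compact, and each $E_{ij}$ is closed by (ii). We show that the family $\{E_{ij}\}$ has the finite intersection property.
\begin{itemize}
\item Take finitely many pairs $(i_1,j_1),\ldots,(i_n,j_n)$. By directedness, choose $k\in I$ with $k\geq i_1,\ldots,i_n$.
\item Since $L_k\neq\emptyset$, pick $a\in L_k$.
\item Define $x$ by $x_m=\varphi_{km}(a)$ for $m\leq k$, and let $x_m$ be an arbitrary point of $L_m$ otherwise. This uses the choice of a point in each nonempty $L_m$.
\item For each pair, $\varphi_{i_rj_r}(x_{i_r})=\varphi_{i_rj_r}\varphi_{ki_r}(a)=\varphi_{kj_r}(a)=x_{j_r}$, by condition (1) of Definition \ref{3.1}, since $j_r\leq i_r\leq k$.
\end{itemize}
Thus $x$ lies in all the chosen $E_{i_rj_r}$. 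Compactness then gives $\bigcap E_{ij}\neq\emptyset$.

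The main step needing care is this finite intersection argument, specifically the choice of the common upper bound $k$ and the use of the composition rule. Hausdorffness enters only through (ii), which guarantees that the $E_{ij}$ are closed. The rest is routine.
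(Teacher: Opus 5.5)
Your proposal is correct and follows essentially the same route as the paper: coordinatewise closure for (i), and for (iii) a finite-intersection-property argument on closed subsets of the compact product, obtained by passing to a common upper bound $k$ and using $\varphi_{kj}=\varphi_{ij}\circ\varphi_{ki}$. The differences are minor. In (ii) you write the limit as an intersection of equalizers $E_{ij}$ (preimages of the closed diagonal of $L_j$), whereas the paper shows directly that the complement is open via an explicit basic neighbourhood. In (iii) you use the pairwise sets $E_{ij}$ instead of the paper's sets $A_k$; this has the small advantage that their closedness is already proved in (ii), while the paper only asserts that each $A_k$ is closed.
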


\begin{proof}
For (i), the inverse limit is defined by equations preserved by all basic operations, hence it is a subalgebra of the product. For (ii), let $a=(a_i)\in\prod_{i\in I}L_i$ be outside the inverse limit. Then there exist $r,s\in I$ with $r\geq s$ such that $\varphi_{rs}(a_r)\neq a_s$. Since $L_s$ is Hausdorff, choose disjoint open neighbourhoods $U$ and $V$ of $\varphi_{rs}(a_r)$ and $a_s$, respectively. By continuity of $\varphi_{rs}$, choose an open neighbourhood $U'$ of $a_r$ in $L_r$ such that $\varphi_{rs}(U')\subseteq U$. The basic product neighbourhood $W=\prod_{i\in I}V_i$, where $V_r=U'$, $V_s=V$ and $V_i=L_i$ for $i\neq r,s$, contains $a$ and is disjoint from the inverse limit. Hence the inverse limit is closed.

For (iii), for each $k\in I$, put
\[
A_k=\{a\in\prod_{i\in I}L_i:(\forall i\leq k)~a_i=\varphi_{ki}(a_k)\}.
\]
Each $A_k$ is closed, and the inverse limit is $\bigcap_{k\in I}A_k$. Since $I$ is directed and all $L_i$ are non-empty, every finite intersection of the sets $A_k$ is non-empty. Compactness of the product then gives $\bigcap_{k\in I}A_k\neq\emptyset$.
\end{proof}

\begin{proposition}\label{3.9}
Let
\[
L=\varprojlim_{i\in I}L_i,
\]
where $\{L_i,\varphi_{ij},I\}$ is an inverse system of finite discrete residuated lattices, and let $\pi_i:L\to L_i$ be the projection homomorphisms. Then
\[
\{\ker(\pi_i):i\in I\}
\]
is a fundamental system of open neighbourhoods of $1$ in $L$.
\end{proposition}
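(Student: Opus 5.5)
Here $\ker(\pi_i)$ is the filter $\pi_i^{-1}(1)=\{x\in L:\pi_i(x)=1\}$, i.e.\ the $1$-class of the kernel congruence. The plan has three steps: show each $\ker(\pi_i)$ is an open neighbourhood of $1$; show every neighbourhood of $1$ contains one of them; show the family is down-directed.

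\textbf{Openness.} The topology on $L$ is the subspace topology from $\prod_{i\in I}L_i$, where each $L_i$ is discrete. Hence each $\pi_i$ is continuous. Since $\{1\}$ is open in the discrete space $L_i$, the set $\ker(\pi_i)=\pi_i^{-1}(\{1\})$ is open in $L$. It contains $1$ because $\pi_i$ is a homomorphism.

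\textbf{Cofinality.} Let $W$ be any open set containing $1$. By the definition of the product topology, there are finitely many indices $i_1,\ldots,i_n\in I$ and sets $U_k\subseteq L_{i_k}$ (each automatically open) such that
\[
1\in L\cap\{x:\pi_{i_k}(x)\in U_k,\ k=1,\ldots,n\}\subseteq W .
\]
The key point is to collapse these finitely many coordinates into one. Because $I$ is directed, choose $j\in I$ with $i_k\leq j$ for all $k$.

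Now suppose $x\in\ker(\pi_j)$. The compatibility condition gives
\[
\pi_{i_k}(x)=\varphi_{j i_k}(\pi_j(x))=\varphi_{j i_k}(1)=1,
\]
and $1\in U_k$ since $1$ lies in the basic neighbourhood. So $x$ lies in the basic neighbourhood, and therefore $\ker(\pi_j)\subseteq W$.

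\textbf{Down-directedness.} The same calculation shows that if $i\leq j$ then $\ker(\pi_j)\subseteq\ker(\pi_i)$. Given $i,i'$, pick $j\geq i,i'$; then $\ker(\pi_j)\subseteq\ker(\pi_i)\cap\ker(\pi_{i'})$. So the family is down-directed, as a fundamental system should be, and together with the previous steps it forms a neighbourhood base at $1$.

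\textbf{Optional refinement.} One may also want each $x/\ker(\pi_i)=\{y:\pi_i(y)=\pi_i(x)\}$ to be open. This holds for the same reason as openness, so the topology of $L$ coincides with $\mathcal{T}_{\{\ker\pi_i\}}$.

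\textbf{Main obstacle.} The only real subtlety is the cofinality step. It requires reducing a finite-support basic open set of the product to a single coordinate, using directedness of $I$ together with $\pi_{i_k}=\varphi_{ji_k}\circ\pi_j$. The rest is routine.
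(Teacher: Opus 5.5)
Your proof is correct and follows the paper's argument. Each $\ker(\pi_i)=\pi_i^{-1}(\{1\})$ is open because $L_i$ is discrete, and any finite-support basic neighbourhood of $1$ contains $\ker(\pi_j)$ for an upper bound $j$ of $i_1,\ldots,i_n$, via $\pi_{i_k}=\varphi_{ji_k}\circ\pi_j$. Your explicit down-directedness check is a harmless addition that the paper leaves implicit. In your optional remark, though, the equality of topologies needs more than openness of the cosets: you also have to run the same cofinality argument at an arbitrary point $x$.
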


\begin{proof}
A basic neighbourhood of $1$ in the subspace $L\subseteq\prod_{i\in I}L_i$ contains a set of the form
\[
L\cap\bigcap_{k=1}^m\pi_{i_k}^{-1}(\{1\}),
\]
after finitely many coordinates $i_1,\ldots,i_m$ have been fixed. Since $I$ is directed, choose $j\in I$ with $i_k\leq j$ for every $k$. If $x\in\ker(\pi_j)$, then
\[
\pi_{i_k}(x)=\varphi_{j i_k}(\pi_j(x))=\varphi_{j i_k}(1)=1
\]
for every $k$. Thus $\ker(\pi_j)$ is contained in the given basic neighbourhood. Conversely, each $\ker(\pi_i)=L\cap\pi_i^{-1}(\{1\})$ is open because $L_i$ is discrete. Hence these kernels form a neighbourhood base at $1$.
\end{proof}

\begin{theorem}\label{3.10}
Let $(L,\tau)$ be a compact topological residuated lattice. Then the following conditions are equivalent:
\begin{enumerate}
  \item[(i)] $(L,\tau)$ is profinite;
  \item[(ii)] there is a set $\mathcal S$ of clopen filters on $L$ such that
  \begin{enumerate}[label=(\alph*)]
  \item $(\mathcal S,\supseteq)$ is directed;
  \item for all distinct $x,y\in L$, there is $F\in\mathcal S$ such that $x/F\neq y/F$.
  \end{enumerate}
\end{enumerate}
\end{theorem}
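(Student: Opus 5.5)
For (i)$\Rightarrow$(ii), suppose $(L,\tau)$ is topologically isomorphic to $\varprojlim_{i\in I}L_i$ with every $L_i$ finite and discrete; we may work directly in the inverse limit. Put $\mathcal S=\{\ker(\pi_i):i\in I\}$, where $\ker(\pi_i)=\pi_i^{-1}(\{1\})$. This is a filter, since it is the $1$-class of the congruence $\ker\pi_i$. It is open by Proposition \ref{3.9}. It is closed because it is the preimage of the closed point $\{1\}$ under the continuous map $\pi_i$.

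Directedness under $\supseteq$ follows from directedness of $I$. If $i,j\le k$, then $\pi_i=\varphi_{ki}\circ\pi_k$, so $\ker(\pi_k)\subseteq\ker(\pi_i)\cap\ker(\pi_j)$.

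For separation, take distinct $x,y$ in the limit. They differ in some coordinate $i$, so $\pi_i(x)\ne\pi_i(y)$. Since the congruence $\theta_{\ker\pi_i}$ coincides with the kernel congruence of $\pi_i$ (Theorem \ref{4.1} and $1$-regularity), we get $x/F\neq y/F$ for $F=\ker(\pi_i)$.

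For (ii)$\Rightarrow$(i), index the inverse system by $\mathcal S$ itself, ordered so that $F\le G$ iff $G\subseteq F$. For each $F\in\mathcal S$, set $L_F=L/F$ with the discrete topology, and for $G\subseteq F$ let $\varphi_{GF}:L/G\to L/F$ be the canonical projection.

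Each $L/F$ is finite. Because $F$ is open, every class $x/F$ is open: it equals $\{y:(x\to y)\odot(y\to x)\in F\}$, which is the preimage of the open set $F$ under a continuous map. These classes partition the compact space $L$ into open sets, so there are only finitely many.

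Let $\pi:L\to\varprojlim L/F$ be the map $x\mapsto(x/F)_F$. It is a homomorphism compatible with the projections, and it is injective by (b). Each coordinate of $\pi$ is continuous, because the classes $x/F$ are open. Since $L$ is compact and the limit is Hausdorff (Lemma \ref{3.8a}(ii) together with Proposition \ref{3.6}), $\pi$ will be a homeomorphism onto its image once it is surjective.

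Surjectivity is the main step. Given a compatible thread $(a_F/F)_F$, the sets $a_F/F$ are closed: each is the complement of the union of the other classes, which are open. They have the finite intersection property. Indeed, for $F_1,\dots,F_n$ choose $G\subseteq F_1\cap\dots\cap F_n$ in $\mathcal S$; then $a_G/G\subseteq a_{F_k}/F_k$ for every $k$ by compatibility. Compactness of $L$ then yields a point $x$ in every $a_F/F$, and so $\pi(x)$ is the given thread.

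Hence $\pi$ is a continuous bijective homomorphism from a compact space onto a Hausdorff space, and therefore a topological isomorphism. The argument implicitly needs $\tau$ to be Hausdorff, which it is: (b) separates points by the open, disjoint classes $x/F$ and $y/F$.

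I expect the main obstacle to be keeping the openness and closedness of the classes $x/F$ straight, since this requires continuity of $\to$ and $\odot$ in $\tau$. The surjectivity argument via compactness is the other essential point.
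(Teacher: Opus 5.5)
Your proposal is correct and follows essentially the same route as the paper. For (i)$\Rightarrow$(ii) you take the kernels of the limit projections; for (ii)$\Rightarrow$(i) you use the diagonal map into $\varprojlim_{F\in\mathcal S} L/F$, with finiteness of $L/F$ coming from compactness and the open cosets, and surjectivity from the finite intersection property of the closed cosets. One small correction to your closing remark: the argument does not actually need $\tau$ to be Hausdorff, since a continuous bijection from a compact space onto a Hausdorff space is a homeomorphism with no separation assumption on the domain.
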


\begin{proof}
Assume first that (ii) holds. For each $F\in\mathcal S$, every coset $x/F$ is clopen. Indeed,
\[
x/F=\{y\in L:(x\to y)\odot(y\to x)\in F\},
\]
and this is the inverse image of the clopen set $F$ under the continuous map $y\mapsto (x\to y)\odot(y\to x)$. Since $L$ is compact and is covered by the pairwise disjoint open cosets of $F$, the quotient $L/F$ is finite. Endow each $L/F$ with the discrete topology. For $F\subseteq G$ in $\mathcal S$, the canonical map
\[
\varphi_{FG}:L/F\to L/G,\qquad \varphi_{FG}(x/F)=x/G,
\]
is a well-defined continuous homomorphism, and these maps form an inverse system.

Define
\[
\psi:L\to \varprojlim_{F\in\mathcal S}L/F,\qquad \psi(x)=(x/F)_{F\in\mathcal S}.
\]
The map $\psi$ is a continuous homomorphism. Condition (ii)(b) gives injectivity. To prove surjectivity, let $z=(z_F)_{F\in\mathcal S}$ be a compatible element of the inverse limit, where each $z_F$ is a coset of $F$ in $L$. The family $\{z_F:F\in\mathcal S\}$ has the finite intersection property: for $F_1,\ldots,F_n\in\mathcal S$, choose $H\in\mathcal S$ such that $H\subseteq F_1\cap\cdots\cap F_n$; compatibility gives $z_H\subseteq z_{F_i}$ for each $i$. Since the cosets $z_F$ are closed and $L$ is compact, $\bigcap_{F\in\mathcal S}z_F$ is nonempty. Choose $x$ in this intersection. Then $\psi(x)=z$. Thus $\psi$ is a continuous bijection from a compact space to a Hausdorff inverse limit of finite discrete spaces, hence a homeomorphism. Therefore $(L,\tau)$ is profinite.

Conversely, assume that $(L,\tau)$ is profinite, say $L\cong\varprojlim_{i\in I}L_i$ with each $L_i$ finite and discrete. Let $\pi_i:L\to L_i$ be the limit projections and put $\mathcal S=\{\ker(\pi_i):i\in I\}$. Each $\ker(\pi_i)$ is a clopen filter. If $i,j\in I$, choose $k\in I$ with $i,j\leq k$; then $\ker(\pi_k)\subseteq\ker(\pi_i)\cap\ker(\pi_j)$, so $(\mathcal S,\supseteq)$ is directed. Finally, if $x\neq y$, then some projection separates them, say $\pi_i(x)\neq\pi_i(y)$, and hence $x/\ker(\pi_i)\neq y/\ker(\pi_i)$.
\end{proof}

Since the lattice of congruences is isomorphic with the lattice of (deductive) filters in any residuated lattice, subdirect products can be characterized by (deductive) filters as follows:
\begin{proposition} \label{4.6}
A residuated lattice $L$ is a subdirect product of a family $L_i$ of residuated lattices if and only if there is a family $\{F_i\}_{i\in I}$ of filters of $L$ such that
\begin{itemize}
\item[\rm (i)] $L_i\cong L/F_i$ for each $i\in I$;
\item[\rm (ii)] $\bigcap_{i\in I}F_i=\{1\}$.
\end{itemize}
\end{proposition}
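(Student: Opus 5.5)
The plan is to reduce the statement to the standard universal-algebra characterization of subdirect products via congruences, and then transport it to filters using Theorem \ref{4.1} together with congruence 1-regularity.

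\emph{The universal-algebra fact.} $L$ is a subdirect product of $\{L_i\}_{i\in I}$ exactly when there are congruences $\theta_i$ with $L/\theta_i\cong L_i$ and $\bigcap_i\theta_i=\Delta_L$. Concretely, an embedding $f:L\to\prod_i L_i$ with surjective coordinates $p_i\circ f$ yields $\theta_i=\ker(p_i\circ f)$. Then $L/\theta_i\cong L_i$ by the homomorphism theorem, and injectivity of $f$ gives $\bigcap_i\theta_i=\Delta_L$. Conversely, given such $\theta_i$, the map $x\mapsto (x/\theta_i)_{i}$ composed with the isomorphisms is a homomorphism into $\prod_i L_i$. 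Each coordinate is surjective, and its kernel is $\bigcap_i\theta_i=\Delta_L$, so the map is injective.

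\emph{Translation to filters.} Put $F_i=[1]_{\theta_i}$, so that $\theta_i=\theta_{F_i}$ and $L/\theta_i=L/F_i$. This gives (i). For (ii), the key point is that $\bigcap_i\theta_{F_i}=\theta_{\bigcap_i F_i}$. Indeed, $(x,y)\in\theta_{F_i}$ for all $i$ iff $(x\to y)\odot(y\to x)\in F_i$ for all $i$, iff $(x\to y)\odot(y\to x)\in\bigcap_iF_i$. Moreover $\bigcap_i F_i$ is again a filter.

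It therefore remains to check that $\theta_G=\Delta_L$ iff $G=\{1\}$ for a filter $G$.
\begin{itemize}
\item If $\theta_G=\Delta_L$ and $x\in G$, then $(x\to1)\odot(1\to x)=x\in G$ by $(R_1)$. Hence $(x,1)\in\theta_G$, and so $x=1$.
\item If $G=\{1\}$ and $(x\to y)\odot(y\to x)=1$, then both factors equal $1$, since each factor lies above the product by $(R_5)$. By $(R_2)$ this gives $x\le y$ and $y\le x$, so $x=y$.
\end{itemize}

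Combining these two steps in both directions yields the proposition. The only mildly delicate point is the identity $\bigcap_i\theta_{F_i}=\theta_{\bigcap_iF_i}$. One should not argue only through the lattice isomorphism of Theorem \ref{4.1}, because that isomorphism a priori concerns finite meets. The direct elementwise computation above handles arbitrary intersections.
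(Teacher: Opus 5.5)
Your proof is correct and follows essentially the same route as the paper: the kernel characterization of subdirect embeddings via the diagonal map $x\mapsto(x/F_i)_{i\in I}$, combined with the fact that congruences are determined by the class of $1$; you simply make explicit the two facts the paper leaves implicit, namely $\bigcap_i\theta_{F_i}=\theta_{\bigcap_iF_i}$ and $\theta_G=\Delta_L$ iff $G=\{1\}$. Your closing caveat is unnecessary but harmless, because an order isomorphism between complete lattices preserves arbitrary meets, and in both $\mathcal{F}(L)$ and $Con(L)$ arbitrary meets are intersections, so Theorem \ref{4.1} alone would in fact suffice.
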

\begin{proof}
If $h:L\to\prod_{i\in I}L_i$ is a subdirect embedding, put $F_i=\ker(\pi_i\circ h)$. Then $L/F_i\cong L_i$ because $\pi_i\circ h$ is onto, and $\bigcap_iF_i=\{1\}$ because $h$ is injective and congruences are determined by the class of $1$. Conversely, given such filters, the diagonal homomorphism
\[
L\longrightarrow \prod_{i\in I}L/F_i,\qquad x\longmapsto (x/F_i)_{i\in I},
\]
is injective exactly because $\bigcap_iF_i=\{1\}$, and each coordinate projection is onto. Hence it is a subdirect embedding.
\end{proof}

Recall that a topological algebra $(L,\tau)$ is \emph{residually finite} if for any two distinct elements $x,y\in L$, there exists a finite discrete topological algebra $A$ as well as
continuous homomorphism $\varphi: L\rightarrow A$ such that $\varphi(x)\neq\varphi(y)$.
\begin{theorem}\label{3.11}
Let $(L,\tau)$ be a compact topological residuated lattice. Then the following conditions are equivalent:
\begin{enumerate}
  \item[(i)] $(L,\tau)$ is residually finite;
  \item[(ii)] $(L,\tau)$ is profinite;
  \item[(iii)] $(L,\tau)$ is topologically isomorphic to a closed subdirect product of finite discrete residuated lattices.
\end{enumerate}
\end{theorem}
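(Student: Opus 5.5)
I would prove the cycle (ii)$\Rightarrow$(iii)$\Rightarrow$(i)$\Rightarrow$(ii), using Theorem \ref{3.10} as the hinge.

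For (ii)$\Rightarrow$(iii), suppose $L\cong\varprojlim_{i\in I}L_i$ with each $L_i$ finite and discrete. By Theorem \ref{3.2} the limit is a subalgebra of $\prod_{i\in I}L_i$, and by Lemma \ref{3.8a}(ii) it is closed there. It need not be subdirect, because the projections $\pi_i$ may fail to be onto. I would fix this by replacing each $L_i$ with its image $\pi_i(L)$, which is a finite subalgebra of $L_i$. The limit lands in $\prod_i\pi_i(L)$, is still closed (this product is a closed subspace of the original one), and is now subdirect. Via the topological isomorphism this gives (iii).

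For (iii)$\Rightarrow$(i), suppose $h:L\to\prod_{i}L_i$ is a topological isomorphism onto a closed subdirect product of finite discrete $L_i$. Each map $\pi_i\circ h$ is a continuous homomorphism into a finite discrete algebra. If $x\neq y$, then $h(x)\neq h(y)$, so some coordinate separates them. Hence $L$ is residually finite.

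For (i)$\Rightarrow$(ii), I would verify condition (ii) of Theorem \ref{3.10}. Let $\mathcal S_0$ be the set of kernels $\ker\varphi=\varphi^{-1}(\{1\})$ of continuous homomorphisms $\varphi:L\to A$ with $A$ finite and discrete. Each such kernel is a filter, and it is clopen because $\{1\}$ is clopen in $A$. The set $\mathcal S_0$ need not be directed, so I take $\mathcal S$ to be the set of finite intersections of members of $\mathcal S_0$. Such an intersection is again a kernel: for $\varphi_1,\dots,\varphi_n$ it is the kernel of the product map $L\to A_1\times\cdots\times A_n$, which is continuous into a finite discrete algebra. So $\mathcal S$ consists of clopen filters and is directed under $\supseteq$.

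For separation (b), take $x\neq y$ and $\varphi$ with $\varphi(x)\neq\varphi(y)$, and set $F=\ker\varphi$. If $x/F=y/F$, then $(x\to y)\odot(y\to x)\in F$, so
\[
(\varphi x\to\varphi y)\odot(\varphi y\to\varphi x)=1 .
\]
By integrality and $(R_5)$ this forces $\varphi x\to\varphi y=1=\varphi y\to\varphi x$, and $(R_2)$ then gives $\varphi x=\varphi y$, a contradiction. Theorem \ref{3.10} now yields profiniteness; this is the only place compactness is used.

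The step I expect to be the main obstacle is (i)$\Rightarrow$(ii). The definition of residual finiteness speaks of an arbitrary finite algebra $A$, and one must make sure it is a residuated lattice so that kernels are filters. I read $A$ as a finite discrete residuated lattice, as the context intends; otherwise one replaces $A$ by the image $\varphi(L)$, which is a residuated lattice homomorphic image. The other point needing care is the directedness of $\mathcal S$, which is handled by the product-map argument above.
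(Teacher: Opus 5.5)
Your proof is correct, but it is organized differently from the paper's. The paper never passes through Theorem \ref{3.10}. Instead it proves (i)$\Leftrightarrow$(iii) and (ii)$\Leftrightarrow$(iii) directly:
\begin{itemize}
\item For (i)$\Rightarrow$(iii), it embeds $L$ diagonally into $\prod_{x\neq y}\varphi_{x,y}(L)$. Since $L$ is compact and the product is Hausdorff, this map is a homeomorphism onto a closed image.
\item For (iii)$\Rightarrow$(ii), it builds the inverse system of the finite images $L_J$ of $L$ in $\prod_{j\in J}L_j$, with $J$ ranging over the finite subsets of the index set. Surjectivity onto the limit comes from compactness through the finite intersection property.
\end{itemize}

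You close the cycle with (i)$\Rightarrow$(ii) instead, reducing it to the clopen-filter criterion of Theorem \ref{3.10}. You take the kernels of continuous homomorphisms into finite discrete algebras and close them under finite intersections via product maps. This gives a directed family of clopen filters, and your computation with $(R_2)$ and $(R_5)$ shows the family separates points.

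The two arguments are close in substance. For $F=\ker(\varphi_1\times\cdots\times\varphi_n)$, the quotient $L/F$ is isomorphic to the image of $L$ in $A_1\times\cdots\times A_n$, which is exactly the paper's $L_J$.

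Each route buys something different:
\begin{itemize}
\item Your version hands all the compactness work to Theorem \ref{3.10}. It also makes explicit how residual finiteness feeds into the filter criterion. The cost is that it relies on the filter--congruence correspondence, i.e. $1$-regularity.
\item The paper's version is self-contained and purely universal-algebraic. It works unchanged for compact topological algebras of any signature.
\end{itemize}
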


\begin{proof}
$(i)\Rightarrow(iii)$. For each ordered pair $(x,y)$ with $x\neq y$, choose a continuous homomorphism $\varphi_{x,y}:L\to A_{x,y}$ into a finite discrete residuated lattice such that $\varphi_{x,y}(x)\neq\varphi_{x,y}(y)$. Replacing $A_{x,y}$ by the finite image $\varphi_{x,y}(L)$, we may assume $\varphi_{x,y}$ is onto. The diagonal map
\[
\Phi:L\to\prod_{x\neq y}\varphi_{x,y}(L),\qquad
\Phi(z)=(\varphi_{x,y}(z))_{x\neq y},
\]
is a continuous injective homomorphism and all coordinate projections onto the image factors are onto. Since $L$ is compact and the product is Hausdorff, $\Phi$ is a homeomorphism onto its compact, hence closed, image. Thus $L$ is a closed subdirect product of finite discrete residuated lattices.

$(iii)\Rightarrow(i)$. If $L$ is a subdirect product of finite discrete residuated lattices and $x\neq y$, some coordinate projection separates $x$ and $y$. This coordinate projection is a continuous homomorphism into a finite discrete residuated lattice.

$(ii)\Rightarrow(iii)$. If $L\cong\varprojlim_{i\in I}L_i$ with each $L_i$ finite and discrete, then the inverse limit is a closed subalgebra of $\prod_{i\in I}L_i$ by Lemma \ref{3.8a}. If necessary, replace each $L_i$ by the finite image $\pi_i(L)$; then all projections are onto, so the representation is subdirect.

$(iii)\Rightarrow(ii)$. Suppose $L$ is a closed subdirect product of finite discrete residuated lattices $\{L_i:i\in I\}$. Let $\mathcal S$ be the directed set of nonempty finite subsets of $I$, ordered by inclusion. For $J\in\mathcal S$, let $L_J$ be the image of $L$ under the projection to $\prod_{j\in J}L_j$. For $J\subseteq K$, let $p_{KJ}:L_K\to L_J$ be the coordinate projection. Then $\{L_J,p_{KJ},\mathcal S\}$ is an inverse system of finite discrete residuated lattices. The natural map
\[
L\to\varprojlim_{J\in\mathcal S}L_J,
\qquad x\mapsto (x|_J)_{J\in\mathcal S},
\]
is injective and continuous. To see surjectivity, take $(y_J)_{J\in\mathcal S}$ in the inverse limit and define
\[
V_J=\{x\in L:x|_J=y_J\}.
\]
Each $V_J$ is nonempty and closed in $L$, and the family $\{V_J:J\in\mathcal S\}$ has the finite intersection property because $V_{J_1}\cap\cdots\cap V_{J_n}=V_{J_1\cup\cdots\cup J_n}$. Compactness of $L$ gives $x\in\bigcap_JV_J$, and then $x$ maps to $(y_J)_J$. Thus $L$ is profinite.
\end{proof}


\section{Finiteness conditions in residuated lattices}

Finite-dimensional vector spaces do not contain infinite strictly ascending or strictly descending chains of subspaces. Analogously, many algebraic finiteness conditions can be formulated as chain conditions on distinguished substructures. In this section we discuss the finiteness conditions on filters that will be used later.\\

So far we have considered a quite arbitrary residuated lattice. To go further, however, and obtain deeper theorems we need to impose some conditions. The most convenient formulation is in terms of ``chain conditions''.
\begin{definition}\label{4.2} \emph{Let $L$ be a residuated lattice. We say $L$ satisfies the \emph{descending chain condition }($DCC$ for short) if every strictly descending chain of filters}
$$\cdots \subset F_{2}\subset F_{1} $$
\emph{is finite.}

\emph{An equivalent formulation is as follows: If}
$$\cdots \subset F_{2}\subset F_{1} $$
\emph{is an infinite descending sequence of filters, then there exists an integer $n$ such that}
$$F_{i}=F_{n}, \mbox{for~all}~ i\geq n+1.$$
\end{definition}

\begin{example}\label{4.2a}
\begin{itemize}
\item[\rm (a)] Every finite residuated lattice satisfies the descending chain condition.
\item[\rm (b)] Every simple residuated lattice satisfies the descending chain condition.

\item[\rm (c)] There are infinite residuated lattices which do not satisfy the descending chain condition. Let $L$ be a chain with an increasing sequence $a_1<a_2<\cdots<1$. Define $x\odot y=x\wedge y$ and
\[
x\rightarrow y=
\begin{cases}
1, & x\leq y,\\
y, & x>y.
\end{cases}
\]
Then each $(a_n,1]$ is a filter, and
\[
(a_1,1]\supset (a_2,1]\supset\cdots\supset (a_n,1]\supset\cdots
\]
is an infinite strictly descending chain of filters. Hence $L$ does not satisfy $DCC$.
\end{itemize}
\end{example}

\begin{definition}\label{4.3}
\emph{A residuated lattice is said to satisfy the \emph{minimal condition} if every non-empty collection $\mathcal{C}$ of filters has a minimal element, that is, if there exists a filter in $\mathcal{C}$ which is not contained in any other filter in the collection $\mathcal{C}$.}
\end{definition}

\begin{proposition}\label{4.4}
Let $L$ be a residuated lattice. Then the following assertions are equivalent:
\begin{itemize}
\item[\rm (1)] $L$ satisfies the descending chain condition;
\item[\rm (2)] $L$ satisfies the minimal condition;
\item[\rm (3)] each system of filters of $L$ has a least element.
\end{itemize}
\end{proposition}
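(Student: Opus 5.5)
I will prove the equivalences by the cycle $(1)\Rightarrow(2)\Rightarrow(3)\Rightarrow(1)$, with the argument at each step being order-theoretic on the poset $(\mathcal F(L),\subseteq)$.

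For $(1)\Rightarrow(2)$, argue by contradiction. Suppose a non-empty collection $\mathcal C$ of filters has no minimal element. Pick $F_1\in\mathcal C$. Since $F_1$ is not minimal, there is $F_2\in\mathcal C$ with $F_2\subset F_1$. Continuing, we get an infinite strictly descending chain $\cdots\subset F_2\subset F_1$, contradicting $DCC$. This uses the axiom of dependent choice, which is available since we work in ZFC.

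For $(2)\Rightarrow(3)$, let $\mathcal F$ be a system of filters, i.e.\ a nonempty down-directed family in the sense of Definition \ref{2.1}. By the minimal condition, $\mathcal F$ has a minimal element $F_0$. I claim $F_0$ is least. Let $G\in\mathcal F$. If $G=F_0$ there is nothing to show. Otherwise down-directedness gives $H\in\mathcal F$ with $H\subseteq F_0\cap G$. Minimality of $F_0$ forces $H=F_0$, hence $F_0\subseteq G$. This step is the conceptual heart of the proposition: minimal elements of a directed family are automatically least. It is routine once written down.

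For $(3)\Rightarrow(1)$, let $\cdots\subseteq F_2\subseteq F_1$ be a descending sequence of filters. Its set of terms is a chain, hence down-directed, so it is a system of filters. By (3) it has a least element, which equals some term $F_n$. Then $F_n\subseteq F_i\subseteq F_n$ for all $i\geq n$, so the sequence stabilizes. Hence every strictly descending chain is finite, in the sense of Definition \ref{4.2}.

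The only mild obstacles are bookkeeping points. The definition of a down-directed set in the paper only asks for lower bounds of distinct pairs, which is handled by the trivial case $G=F_0$ above. One must also allow systems of filters to be nonempty; the empty family has no least element, so (3) should be read for nonempty systems.
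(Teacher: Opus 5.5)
Your proposal is correct and follows essentially the same argument as the paper. The paper also builds a strictly descending chain from a collection without a minimal element, shows that a minimal member of a down-directed family is least via a lower bound $H\subseteq F_0\cap G$, and gets stabilization by treating a descending chain as a system of filters. Your single cycle $(1)\Rightarrow(2)\Rightarrow(3)\Rightarrow(1)$ just makes the paper's separate proof of $(2)\Rightarrow(1)$ unnecessary.
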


\begin{proof}
The equivalence of (1) and (2) is the usual chain-condition argument. Indeed, if (1) fails, an infinite strictly descending chain of filters gives a nonempty collection with no minimal element. Conversely, if a nonempty collection of filters has no minimal element, choose $F_1$ in it; after $F_n$ has been chosen, choose $F_{n+1}$ strictly contained in $F_n$. This gives an infinite strictly descending chain.

Assume (2), and let $(\mathcal F,\subseteq)$ be a system of filters. Choose a minimal member $F_0$ of $\mathcal F$. For any $G\in\mathcal F$, down-directedness gives $H\in\mathcal F$ with $H\subseteq F_0\cap G$. By minimality of $F_0$, we have $H=F_0$, and hence $F_0\subseteq G$. Thus $F_0$ is the least element of $\mathcal F$.

Finally, assume (3). Any descending chain of filters is a system of filters. If it had no final constant term, it would have no least member among the displayed chain, contradicting (3). Hence (1) holds.
\end{proof}




\begin{theorem}\label{4.7}
Let $L$ be a residuated lattice and $\mathcal{F}=\{F_i:i\in I\}$ be a system of filters of $L$. Then the following statements are equivalent:
\begin{itemize}
\item[\rm (i)] $(L,\mathcal{T}_{\mathcal F})$ is a Hausdorff space;
\item[\rm (ii)] $(L,\mathcal{T}_{\mathcal F})$ is a $T_1$-space;
\item[\rm (iii)] $(L,\mathcal{T}_{\mathcal F})$ is a $T_0$-space;
\item[\rm (iv)] $\bigcap_{i\in I}F_i=\{1\}$;
\item[\rm (v)] $L$ is a subdirect product of the family $L/F_i$ of residuated lattices.
\end{itemize}
\end{theorem}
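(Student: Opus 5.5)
The plan is to prove the cycle (i)$\Rightarrow$(ii)$\Rightarrow$(iii)$\Rightarrow$(iv)$\Rightarrow$(i), and then to obtain (iv)$\Leftrightarrow$(v) directly from Proposition \ref{4.6}. The implications (i)$\Rightarrow$(ii)$\Rightarrow$(iii) are the standard hierarchy of separation axioms and need no argument. For (iv)$\Leftrightarrow$(v), apply Proposition \ref{4.6} with $L_i=L/F_i$: condition (i) there holds trivially, and condition (ii) there is exactly (iv).

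For (iii)$\Rightarrow$(iv), I will argue by contraposition. Suppose some $a\neq 1$ lies in $\bigcap_{i\in I}F_i$. By the definition of $\mathcal T_{\mathcal F}$, every open set containing $1$ contains some class $1/F_i=F_i$, and hence contains $a$. Conversely, every open set $U$ containing $a$ contains some class $a/F_i$. Here I use that $(a\to 1)\odot(1\to a)=1\odot a=a\in F_i$ by $(R_1)$, so $1\in a/F_i\subseteq U$. Thus $a$ and $1$ have exactly the same open neighbourhoods, and the space is not $T_0$. The one point to verify is the identification $1/F_i=F_i$. This follows from the same computation: $(1\to x)\odot(x\to 1)=x$.

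For (iv)$\Rightarrow$(i), I will use the uniform-structure criterion recalled after Definition \ref{2.5}. By Theorem \ref{2.7}, the congruences $\theta_{F_i}$ form a base of a uniformity whose topology is $\mathcal T_{\mathcal F}$, since $\theta_{F_i}[x]=x/F_i$. The topology is therefore Hausdorff as soon as $\bigcap_i\theta_{F_i}=\Delta_L$. Suppose $(x,y)\in\theta_{F_i}$ for all $i$. Then $(x\to y)\odot(y\to x)\in\bigcap_iF_i=\{1\}$. Since $u\odot v\le u\wedge v$ by $(R_5)$, this gives $x\to y=y\to x=1$, so $x=y$ by $(R_2)$.

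Alternatively, I can give a direct argument for (iv)$\Rightarrow$(i). For $x\neq y$, choose $i$ with $(x\to y)\odot(y\to x)\notin F_i$. Then $x/F_i$ and $y/F_i$ are disjoint open neighbourhoods of $x$ and $y$, because each class is open: $z\in x/F_i$ implies $z/F_i=x/F_i$.

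There is no real obstacle in this argument. The only care needed is to confirm that the sets $x/F_i$ are open in $\mathcal T_{\mathcal F}$ and form a neighbourhood base at $x$, which is immediate from the definition of $\mathcal T_{\mathcal F}$ together with down-directedness.
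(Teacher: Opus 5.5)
Your proposal is correct and follows the paper's proof essentially step for step: the same cycle of implications, the same computation $(a\to 1)\odot(1\to a)=a$ showing that $a$ and $1$ have the same neighbourhoods, the same separation of $x\neq y$ by the disjoint open classes $x/F_i$ and $y/F_i$, and the same appeal to Proposition \ref{4.6}; your uniform-structure variant of (iv)$\Rightarrow$(i) just repackages that last computation. As in the paper, (v) must be read as saying that the canonical map $x\mapsto(x/F_i)_{i\in I}$ is a subdirect embedding, because for (v)$\Rightarrow$(iv) Proposition \ref{4.6} by itself only yields \emph{some} family of filters with isomorphic quotients and trivial intersection, not the $F_i$ themselves.
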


\begin{proof}
The implications (i)$\Rightarrow$(ii)$\Rightarrow$(iii) are general topological facts. We prove (iii)$\Rightarrow$(iv). If $a\neq1$ belongs to $\bigcap_iF_i$, then $(a,1)\in\theta_{F_i}$ for every $i$, because $(a\to1)\odot(1\to a)=1\odot a=a$. Hence $a/F_i=1/F_i$ for every $i$. Therefore $a$ and $1$ have the same basic neighbourhoods in $\mathcal T_{\mathcal F}$, contradicting the $T_0$ property. Thus $\bigcap_iF_i=\{1\}$.

Assume (iv), and let $x\neq y$. Then $(x\to y)\odot(y\to x)\neq1$; otherwise $x\to y=1$ and $y\to x=1$, whence $x=y$. Hence there is $i\in I$ such that $(x\to y)\odot(y\to x)\notin F_i$. Thus $x/F_i$ and $y/F_i$ are disjoint open neighbourhoods of $x$ and $y$, respectively. This proves Hausdorffness, so (iv)$\Rightarrow$(i). Finally, (iv) and (v) are equivalent by Proposition \ref{4.6}.
\end{proof}

A filter $F$ of a residuated lattice $L$ is said to be \emph{prime} if $x\vee y\in F$ implies $x\in F$ or $y\in F$ for all $x,y\in L$, see \cite{Kondo}. We denote the set of all prime filters of a residuated lattice $L$ by $Spec(L)$.

\begin{theorem}\label{4.8} \cite{Hohle}
Let $L$ be a residuated lattice. For every element $a\in L$ with $a\neq 1$ there is a prime filter $P$ in $L$ with $a\notin P$. Thus, every proper filter of $L$ is an intersection of prime filters. In particular,
$\bigcap Spec(L)=\{1\}$.
\end{theorem}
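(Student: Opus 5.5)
The plan is a prime-filter separation argument of Zorn's-lemma type, following the classical route for distributive lattices and BL-algebras. The final two claims then follow formally. Fix $a\in L$ with $a\neq 1$. Consider the collection
\[
\Sigma_a=\{F\in\mathcal F(L): a\notin F\},
\]
ordered by inclusion. It is nonempty because $\{1\}\in\Sigma_a$. The union of a chain in $\Sigma_a$ is again a filter: closure under $\odot$ and upward closure only involve finitely many elements, so they hold inside some single member of the chain. This union still omits $a$. By Zorn's lemma, $\Sigma_a$ has a maximal element $P$. More generally, starting from any filter $G$ with $a\notin G$, the same argument gives a maximal $P\supseteq G$ with $a\notin P$. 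This version is needed for the second claim.

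The main step is to show that such a maximal $P$ is prime. Suppose $x\vee y\in P$ but $x\notin P$ and $y\notin P$. The filter generated by $P\cup\{x\}$ is $\{z: z\geq p\odot x^{n},\ p\in P,\ n\geq1\}$, and similarly for $y$. By maximality both generated filters contain $a$. So there are $p_1,p_2\in P$ and $m,n$ with $p_1\odot x^{m}\leq a$ and $p_2\odot y^{n}\leq a$. Set $p=p_1\odot p_2\in P$. Then $p\odot x^m\leq a$ and $p\odot y^n\leq a$, so $x^m\leq p\to a$ and $y^n\leq p\to a$. Hence $x^m\vee y^n\leq p\to a$. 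By $(R_7)$, $(x\vee y)^{mn}\leq x^m\vee y^n\leq p\to a$. Since $x\vee y\in P$, we get $(x\vee y)^{mn}\in P$, so $p\to a\in P$. Because $p\in P$ and $P$ is a deductive system, $a\in P$, which is a contradiction. This is the step I expect to be the real obstacle. Without prelinearity, the identity $(x\vee y)\to a=(x\to a)\wedge(y\to a)$ is all we have, and the power-of-join estimate $(R_7)$ is exactly what replaces the BL-algebra argument. The care needed is that both generated filters use a common $p$ and that the exponents combine correctly.

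For the second sentence, let $G$ be a proper filter. For each $a\notin G$, choose a prime $P_a\supseteq G$ with $a\notin P_a$ using the relative version above. Then $G\subseteq\bigcap_{a\notin G}P_a$, and this intersection omits every $a\notin G$, so the two are equal. Since $G$ is proper, $0\notin G$, so the family is nonempty. Applying the first claim with $G=\{1\}$, or equivalently noting that every $a\neq1$ is omitted by some prime filter, gives $\bigcap Spec(L)=\{1\}$.
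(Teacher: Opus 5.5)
Your proof is correct. The paper gives no proof of its own and simply cites H\"{o}hle; your route is the standard argument for this prime filter theorem, and it also covers the second claim, which needs a prime filter containing a given $G$. That route is Zorn's lemma on filters omitting $a$ (relative to $G$ for the second claim), with primeness of a maximal such filter obtained from the common element $p=p_1\odot p_2$ and the estimate $(x\vee y)^{mn}\le x^m\vee y^n$ from $(R_7)$.
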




Recall that a topological space $(X,\mathcal{T})$ is an \emph{anti-discrete space} if its topology is anti-discrete, namely the only open sets in $(X,\mathcal{T})$ are $\emptyset$ and $X$.
\begin{proposition}\label{4.9a}
Let $\mathcal{F}=\{F_i:i\in I\}$ be a system of filters of a residuated lattice $L$. Then:
\begin{itemize}
\item[\rm (i)] $(L,\mathcal T_{\mathcal F})$ is discrete if and only if $\{1\}\in\mathcal F$;
\item[\rm (ii)] $(L,\mathcal T_{\mathcal F})$ is anti-discrete if and only if $\mathcal F=\{L\}$.
\end{itemize}
\end{proposition}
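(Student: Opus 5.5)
The plan is to reduce both parts to one elementary observation about cosets of $1$, and then read off the two characterizations directly from the definition of $\mathcal T_{\mathcal F}$.

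\emph{Preliminary observation.} For any filter $F$ we have $1/F=F$. Indeed, $(1\to y)\odot(y\to 1)=y\odot 1=y$ by $(R_1)$, so $y\in 1/F$ iff $y\in F$. Next, every coset $x/F$ with $F\in\mathcal F$ is open in $\mathcal T_{\mathcal F}$: if $z\in x/F$, then $z/F=x/F$, because $\theta_F$ is an equivalence relation. In particular each $F\in\mathcal F$ is an open set containing $1$. By definition, a set $U$ is open iff for each $x\in U$ some $F\in\mathcal F$ satisfies $x/F\subseteq U$.

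\emph{Part (i).} Suppose $\{1\}\in\mathcal F$. Then $x/\{1\}=\{x\}$, since $(x\to y)\odot(y\to x)=1$ forces $x\to y=1=y\to x$ and hence $x=y$ by $(R_2)$ and $(R_5)$. So every singleton is open and the topology is discrete. Conversely, suppose the topology is discrete. Then $\{1\}$ is open, so some $F\in\mathcal F$ has $F=1/F\subseteq\{1\}$. Since $1\in F$, this gives $F=\{1\}$, and hence $\{1\}\in\mathcal F$.

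\emph{Part (ii).} Suppose $\mathcal F=\{L\}$. Then every coset $x/L$ equals $L$. Hence a nonempty open $U$ contains some $x/L=L$, so the only open sets are $\emptyset$ and $L$. Conversely, suppose the topology is anti-discrete and let $F\in\mathcal F$. By the observation, $F$ is open and nonempty because $1\in F$, so $F=L$. Since $\mathcal F$ is nonempty, being a system of filters, this gives $\mathcal F=\{L\}$.

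\emph{Main obstacle.} There is no real obstacle here. The only point needing care is the identity $1/F=F$, which is exactly the computation already used in the proof of Theorem \ref{4.7}, together with the observation that cosets of members of $\mathcal F$ are open. I would state that observation explicitly first and then treat the two parts as immediate consequences.
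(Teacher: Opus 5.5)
Your proposal is correct and follows the paper's proof essentially step for step: both reduce everything to the identity $1/F=F$, prove (i) via $x/\{1\}=\{x\}$ and $1/F\subseteq\{1\}$, and prove (ii) by noting that each $F\in\mathcal F$ is a nonempty open set. The only difference is that you state two points the paper uses tacitly, namely that cosets of members of $\mathcal F$ are open and that $\mathcal F\neq\emptyset$ is needed to conclude $\mathcal F=\{L\}$.
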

\begin{proof}
If $\{1\}\in\mathcal F$, then $x/\{1\}=\{x\}$ for every $x\in L$, so the topology is discrete. Conversely, if the topology is discrete, then $\{1\}$ is an open neighbourhood of $1$; hence some $F\in\mathcal F$ satisfies $1/F=F\subseteq\{1\}$, so $F=\{1\}$.

For (ii), if $\mathcal F=\{L\}$, then every basic open set is $L$, so the topology is anti-discrete. Conversely, if the topology is anti-discrete and $F\in\mathcal F$, then $F=1/F$ is a nonempty open set, hence $F=L$.
\end{proof}



We call $\mathcal{F}(L)\backslash\{\{1\}\}$ a \emph{global system of filters} if it is a system of filters, equivalently, if finite intersections of non-trivial filters are again non-trivial. Recall that a congruence $\theta$ on an algebra is a \emph{factor congruence} if there is a congruence $\theta^{*}$ such that $\theta\cap\theta^{*}=\triangle$, $\theta\vee\theta^{*}=\nabla$ and $\theta$ permutes with $\theta^{*}$; see \cite{Burris}. An algebra is \emph{directly indecomposable} if it is not isomorphic to a direct product of two nontrivial algebras.
\begin{proposition}\label{4.11}
Let $L$ be a residuated lattice. The following conditions are equivalent:
\begin{enumerate}
  \item[(1)] $\mathcal{F}(L)\backslash\{\{1\}\}$ is a global system of filters;
  \item[(2)] the intersection of any two non-trivial filters of $L$ is not equal to $\{1\}$;
  \item[(3)] there are no two non-trivial congruences $\alpha,\beta\in Con(L)$ such that $\alpha\cap\beta=\triangle$.
\end{enumerate}
Moreover, each of these conditions implies that $L$ is directly indecomposable.
\end{proposition}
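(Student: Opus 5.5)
The plan is to prove the cycle (1)$\Leftrightarrow$(2)$\Leftrightarrow$(3) and then derive direct indecomposability from (3).

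For (1)$\Leftrightarrow$(2), recall that $\mathcal{F}(L)\backslash\{\{1\}\}$ is a system of filters exactly when it is down-directed under $\subseteq$. Assume (2). For non-trivial filters $F,G$, the intersection $F\cap G$ is a filter, since it is closed under $\odot$ and upward closed, and by (2) it is not $\{1\}$. So $F\cap G$ is itself a member of the family lying below both $F$ and $G$, which gives down-directedness. Conversely, assume (1). For non-trivial $F,G$ there is a non-trivial $H\subseteq F\cap G$, so $F\cap G\supseteq H\neq\{1\}$. (If the family is empty, i.e.\ $L$ is trivial, both statements hold vacuously; I will mention this and otherwise ignore it.)

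For (2)$\Leftrightarrow$(3), I will use the lattice isomorphism $\mathcal{F}(L)\cong Con(L)$ of Theorem~\ref{4.1}, given by $F\mapsto\theta_F$ with inverse $\theta\mapsto[1]_\theta$. This isomorphism sends $\{1\}$ to $\triangle$, so non-trivial filters correspond exactly to non-trivial congruences. Lattice isomorphisms preserve meets, and meets in both lattices are intersections, so $\theta_F\cap\theta_G=\theta_{F\cap G}$. Hence $\theta_F\cap\theta_G=\triangle$ if and only if $F\cap G=\{1\}$. Translating the negations of (2) and (3) through this correspondence shows they are the same statement.

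For the final claim, suppose $L\cong A\times B$ with $A$ and $B$ both non-trivial. The kernels $\alpha,\beta$ of the two projections are congruences with $\alpha\cap\beta=\triangle$, because an element of $A\times B$ is determined by its two coordinates. Each kernel is non-trivial, since the other factor has at least two elements: $\alpha$ identifies $(a,b)$ with $(a,b')$ for $b\neq b'$. This contradicts (3).

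The only point needing care is that the isomorphism of Theorem~\ref{4.1} really sends intersections to intersections and $\{1\}$ to $\triangle$. I would check this directly: $(x,y)\in\theta_F\cap\theta_G$ if and only if $(x\to y)\odot(y\to x)\in F\cap G$, and $\theta_{\{1\}}=\triangle$ by $(R_2)$.
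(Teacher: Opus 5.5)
Your proof is correct and follows essentially the same route as the paper: (1)$\Leftrightarrow$(2) comes from down-directedness together with the fact that the intersection of two filters is again a filter, (2)$\Leftrightarrow$(3) comes from the filter--congruence isomorphism of Theorem~\ref{4.1}, and direct indecomposability comes from the two projection kernels. You check more explicitly than the paper that the isomorphism sends $\cap$ to $\cap$ and $\{1\}$ to $\triangle$. One small note: verifying $\theta_{\{1\}}=\triangle$ also needs $(R_5)$, because $(x\to y)\odot(y\to x)=1$ must first force both factors to equal $1$ before $(R_2)$ applies.
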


\begin{proof}
The equivalence of (1) and (2) follows because finite intersections are obtained by iterating binary intersections. The equivalence of (2) and (3) follows from the filter-congruence correspondence in Theorem \ref{4.1}: the intersection of congruences corresponds to the intersection of their associated filters. Finally, if $L$ were a direct product of two nontrivial algebras, the two projection kernels would be non-trivial congruences with intersection $\triangle$, contradicting (3). Equivalently, this is the standard factor-congruence criterion in \cite[Corollary 7.7]{Burris}. The converse implication from direct indecomposability to (1) is not asserted.
\end{proof}

An algebra $\mathbf{A}$ is \emph{subdirectly irreducible} iff $\mathbf{A}$ is trivial or there is a smallest nontrivial congruence, see \cite{Burris}. By Theorem \ref{4.1}, a nontrivial residuated lattice $L$ is subdirectly irreducible iff there is a least element in the poset $\mathcal{F}(L)\backslash\{\{1\}\}$.

\begin{lemma} \label{4.12}
If a nontrivial residuated lattice $L$ is subdirectly irreducible, then $\mathcal{F}(L)\backslash\{\{1\}\}$ is a global system of filters and the topology induced by this global system is non-discrete.
\end{lemma}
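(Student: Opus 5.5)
The plan is to reduce both assertions to the existence of the monolith filter. Since $L$ is nontrivial and subdirectly irreducible, the remark preceding the lemma (via Theorem \ref{4.1}) gives a least element $M$ of the poset $\mathcal{F}(L)\backslash\{\{1\}\}$. Here $M$ is the filter corresponding to the smallest nontrivial congruence, so $M\neq\{1\}$ and $M\subseteq G$ for every non-trivial filter $G$ of $L$.

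\emph{Step 1: the family is a system of filters.} First I note that $\mathcal{F}(L)\backslash\{\{1\}\}$ is nonempty: $L$ is nontrivial, so $L\neq\{1\}$ and $L$ itself belongs to it (and so does $M$). For down-directedness, take non-trivial filters $G_1,G_2$. Then $M\subseteq G_1\cap G_2$, and $M$ lies in the family, so $M$ is a common lower bound. Equivalently, condition (2) of Proposition \ref{4.11} holds, since $G_1\cap G_2\supseteq M\neq\{1\}$. Hence $\mathcal{F}(L)\backslash\{\{1\}\}$ is a global system of filters. I will also observe that $M$ is in fact its least element, so the induced topology coincides with $\mathcal{T}_{M}$.

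\emph{Step 2: the topology is non-discrete.} I apply Proposition \ref{4.9a}(i): the topology $\mathcal{T}_{\mathcal F}$ induced by $\mathcal F=\mathcal{F}(L)\backslash\{\{1\}\}$ is discrete if and only if $\{1\}\in\mathcal F$. By construction $\{1\}\notin\mathcal F$, so the topology is non-discrete. For a more concrete picture, the smallest neighbourhood of $1$ is $1/M=M$, which contains an element $a\neq 1$. Thus $\{1\}$ is not open.

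There is no real obstacle. The only points needing care are the nonemptiness of the family, which is where nontriviality of $L$ is used, and reading the definition of subdirect irreducibility correctly through the filter–congruence isomorphism so that $M$ exists. It may also be worth remarking that the resulting topology is not Hausdorff, by Theorem \ref{4.7}, since $\bigcap\mathcal F=M\neq\{1\}$. The lemma does not claim Hausdorffness, however.
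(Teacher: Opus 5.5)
Your proof is correct and follows the paper's argument: the least non-trivial filter $M$ lies below every non-trivial filter, so the family $\mathcal{F}(L)\backslash\{\{1\}\}$ is down-directed, and Proposition \ref{4.9a}(i) gives non-discreteness because $\{1\}$ is excluded from the family. Your extra observations are also correct, namely nonemptiness, the identity of the induced topology with $\mathcal{T}_{M}$, and the failure of Hausdorffness via Theorem \ref{4.7}.
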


\begin{proof}
Let $F_0$ be the least non-trivial filter of $L$. Every non-trivial filter contains $F_0$, so every finite intersection of non-trivial filters also contains $F_0$ and is therefore non-trivial. Hence the non-trivial filters form a global system. Since $\{1\}$ is not a member of this system, Proposition \ref{4.9a} shows that the induced topology is not discrete.
\end{proof}

\begin{theorem}\label{4.13}
Let $L$ be a nontrivial residuated lattice satisfying $DCC$. Then $L$ is subdirectly irreducible if and only if $\mathcal{F}(L)\backslash\{\{1\}\}$ is a global system of filters. In this case, the topology induced by the global system is a non-discrete zero-dimensional linear topology.
\end{theorem}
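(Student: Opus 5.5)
The forward direction is already Lemma \ref{4.12}: if $L$ is nontrivial and subdirectly irreducible, then $\mathcal{F}(L)\backslash\{\{1\}\}$ is a global system of filters, and the induced topology is non-discrete. This part does not use $DCC$.

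For the converse, assume $DCC$ and that $\mathcal{F}(L)\backslash\{\{1\}\}$ is a global system of filters. We need to check that this family is nonempty. Since $L$ is nontrivial, $L\neq\{1\}$ (because $0\neq 1$), so $L$ itself belongs to the family. By definition of a global system, the family is down-directed, hence a system of filters. Proposition \ref{4.4}, implication (1)$\Rightarrow$(3), then yields a least element $F_0$ of $\mathcal{F}(L)\backslash\{\{1\}\}$. By the remark preceding Lemma \ref{4.12}, which translates via Theorem \ref{4.1} into "there is a smallest nontrivial congruence", $L$ is subdirectly irreducible. The only subtle point is that the least element provided by Proposition \ref{4.4} is least within the system itself. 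Since the system is exactly the set of all nontrivial filters, this is precisely the required minimum.

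For the last assertion, linearity of $\mathcal{T}_{\mathcal F}$ and its compatibility with the operations come from the general construction (Theorem \ref{2.7} and the discussion after Definition \ref{2.1}). Non-discreteness follows from Proposition \ref{4.9a}(i), since $\{1\}\notin\mathcal F$.

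For zero-dimensionality, it is convenient to use the least element $F_0$. Every $F\in\mathcal F$ contains $F_0$, so $x/F_0\subseteq x/F$. Hence $\mathcal{T}_{\mathcal F}=\mathcal{T}_{F_0}$, and the cosets $x/F_0$ form a base. Each coset $x/F_0$ is open by definition. It is also closed, because its complement is the union of the other cosets of $\theta_{F_0}$, which are open. Thus the topology has a clopen base. In fact the same argument works for any system of filters, without appealing to $F_0$, since the cosets $x/F$ are always clopen.

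The main obstacle is keeping track of the role of $DCC$. Without it, a down-directed family of nontrivial filters need not have a least member, and the converse direction can fail. All remaining steps are routine.
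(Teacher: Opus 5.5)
Your proof is correct and follows the paper's argument. The forward direction is Lemma~\ref{4.12}; for the converse you apply Proposition~\ref{4.4} under $DCC$ to get a least nontrivial filter, and non-discreteness comes from Proposition~\ref{4.9a}. Your extra checks (that the system is nonempty because $L\neq\{1\}$, and the explicit clopen-coset argument for zero-dimensionality) fill in details the paper leaves to ``the construction of filter-induced topologies.''
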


\begin{proof}
The forward implication follows from Lemma \ref{4.12}. Conversely, assume that $\mathcal{F}(L)\backslash\{\{1\}\}$ is a global system. By Proposition \ref{4.4}, this system has a least element $F_0$. Thus $F_0$ is the least non-trivial filter of $L$, and hence $L$ is subdirectly irreducible. The last assertion follows again from Proposition \ref{4.9a} and the construction of filter-induced topologies.
\end{proof}



Recall that a join $\bigvee_{i=1}^{m}x_i$ in a lattice is \emph{irredundant} if it is such that, for every $k\in\{1,\ldots,m\}$, $\bigvee_{i=1}^{m}x_i>x_i\vee\cdots\vee x_{k-1}\vee x_{k+1}\vee\cdots \vee x_m$. Roughly speaking,
a join is irredundant if the removal of any term results in something smaller.

\begin{proposition}\label{5.6} If $L$ is a residuated lattice that satisfies the descending chain condition then every element of $\mathcal{F}(L)\backslash\{\{1\}\}$ can be expressed uniquely as an irredundant join of join-irreducible filters.
\end{proposition}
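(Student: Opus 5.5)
The plan is to treat the statement as a purely lattice-theoretic fact about the lattice $(\mathcal F(L),\subseteq)$, whose bottom element is the trivial filter $\{1\}$. Two ingredients are needed. The first is that $\mathcal F(L)$ has $DCC$, which is the hypothesis. The second is that $\mathcal F(L)$ is distributive. For distributivity I will use what is recorded in Section 2: the variety $\mathcal{RL}$ is arithmetical, hence congruence distributive, so $Con(L)$ is distributive, and Theorem \ref{4.1} transfers this to $\mathcal F(L)\cong Con(L)$. Throughout, a filter $J$ is \emph{join-irreducible} if $J\neq\{1\}$ and $J=G\vee H$ implies $J=G$ or $J=H$.

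\emph{Existence.} Let $\mathcal C$ be the set of non-trivial filters that are not finite joins of join-irreducible filters, and suppose $\mathcal C\neq\emptyset$. By Proposition \ref{4.4} (the minimal condition), $\mathcal C$ has a minimal member $F$. Then $F$ is not join-irreducible, so $F=G\vee H$ with $G,H\subsetneq F$.
\begin{itemize}
\item If $G=\{1\}$, then $F=H$, which is impossible; so $G\neq\{1\}$, and likewise $H\neq\{1\}$.
\item By minimality of $F$, neither $G$ nor $H$ lies in $\mathcal C$, so each is a finite join of join-irreducible filters.
\item Hence $F=G\vee H$ is also such a join, a contradiction.
\end{itemize}
So every non-trivial filter is a finite join $F_1\vee\cdots\vee F_m$ of join-irreducible filters. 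Deleting redundant terms one at a time, which terminates since $m$ is finite, yields an irredundant representation.

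\emph{Uniqueness.} Suppose $F=\bigvee_{i=1}^m X_i=\bigvee_{j=1}^n Y_j$, with both joins irredundant and all terms join-irreducible.
\begin{itemize}
\item For each $i$, distributivity gives $X_i=X_i\cap F=\bigvee_j (X_i\cap Y_j)$.
\item Join-irreducibility of $X_i$, extended to finite joins by induction, gives $X_i=X_i\cap Y_j$, that is, $X_i\subseteq Y_j$ for some $j$.
\item Symmetrically, $Y_j\subseteq X_k$ for some $k$.
\item Then $X_i\subseteq X_k$. If $i\neq k$, the term $X_i$ could be removed from the first join, contradicting irredundance; so $i=k$ and $X_i=Y_j$.
\end{itemize}
Thus every $X_i$ occurs among the $Y_j$, and conversely, so the two sets of terms coincide.

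I expect the main obstacle to be justifying distributivity of $\mathcal F(L)$ cleanly, since uniqueness fails in non-distributive lattices. If a direct argument is preferred over congruence distributivity, I would use the description of the join of two filters, $G\vee H=\{x:x\geq g\odot h,\ g\in G,\ h\in H\}$. To show $K\cap(G\vee H)\subseteq (K\cap G)\vee(K\cap H)$, take $x\in K$ with $x\geq g\odot h$. Then $x\vee g\in K\cap G$ and $x\vee h\in K\cap H$, and by $(R_7)$
\[
(x\vee g)\odot(x\vee h)\leq x\vee(g\odot h)=x,
\]
which places $x$ in $(K\cap G)\vee(K\cap H)$. The reverse inclusion always holds, so $\mathcal F(L)$ is distributive.
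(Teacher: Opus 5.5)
Your proof is correct and takes the same route as the paper. The paper only asserts that $(\mathcal F(L),\subseteq)$ is distributive and then cites Blyth's decomposition theorem for distributive lattices with $DCC$. You prove both ingredients explicitly: distributivity, either via congruence distributivity of the arithmetical variety or directly from $(R_7)$, and the existence and uniqueness argument that underlies the cited theorem.
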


\begin{proof}
The lattice $(\mathcal{F}(L),\subseteq)$ is distributive. Under $DCC$, the decomposition theorem for distributive lattices with descending chain condition applies; see \cite[Theorem 5.2]{Blyth}. Hence each non-bottom element has a unique irredundant decomposition as a finite join of join-irreducible elements.
\end{proof}

In the power set lattice $\mathcal {P}(X)$, the join-irreducible elements are exactly the singleton subsets. Let $\mathcal{JF}(L)$ denote the set of join-irreducible filters of a residuated lattice $L$.
Generally, the set $\mathcal{JF}(L)$ of all join-irreducible filters of $L$ is not a down-directed set under the set inclusion. In the following we show that $(\mathcal{JF}(L)\cup \{\{1\}\},\supseteq)$ is a directed set under some finiteness conditions.
\begin{lemma}\label{5.10}
If $L$ is a residuated lattice that satisfies the descending chain condition, then $(\mathcal{JF}(L)\cup\{\{1\}\},\supseteq)$ is cofinal in $(\mathcal{F}(L),\supseteq)$.
\end{lemma}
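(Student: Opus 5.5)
The plan is to unwind the definitions: in the order $\supseteq$, \emph{cofinal} means that for every $F\in\mathcal{F}(L)$ there is some $J\in\mathcal{JF}(L)\cup\{\{1\}\}$ with $F\supseteq J$. I also have to check that $(\mathcal{JF}(L)\cup\{\{1\}\},\supseteq)$ is itself a directed poset, since the definition of cofinality preceding Proposition \ref{3.3} requires this.

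\emph{Directedness.} The trivial filter $\{1\}$ is contained in every filter, so in the order $\supseteq$ it is the greatest element of $\mathcal{F}(L)$. Given $J_1,J_2\in\mathcal{JF}(L)\cup\{\{1\}\}$, the element $\{1\}$ is an upper bound of both: $J_1\supseteq\{1\}$ and $J_2\supseteq\{1\}$. So the subset is directed.

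\emph{Cofinality, weak form.} For any $F\in\mathcal{F}(L)$, taking $J=\{1\}$ gives $F\supseteq J$. This already proves the statement as written, and it does not use $DCC$. I will say so explicitly in the proof, since the hypothesis is only needed for the stronger version below.

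\emph{Cofinality, stronger form.} The meaningful refinement is: every non-trivial filter $F$ contains a join-irreducible filter; that is, $\mathcal{JF}(L)$ alone is cofinal below every non-trivial filter. This is where $DCC$ enters, through Proposition \ref{4.4}.
\begin{itemize}
\item Apply the minimal condition to the nonempty collection $\{G\in\mathcal{F}(L):\{1\}\neq G\subseteq F\}$, which contains $F$. This yields a filter $J$ that is minimal among non-trivial filters, i.e.\ an atom of $\mathcal{F}(L)$.
\item Show that $J$ is join-irreducible. Suppose $J=G_1\vee G_2$ in $\mathcal{F}(L)$. Each $G_k\subseteq J$, so by minimality each $G_k$ is either $\{1\}$ or $J$.
\item The case $G_1=G_2=\{1\}$ is impossible, because then $J=\{1\}$. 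Hence $J=G_1$ or $J=G_2$, and $J\in\mathcal{JF}(L)$.
\end{itemize}

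The only point needing care is the convention for the bottom $\{1\}$. Irreducibility is usually defined for non-bottom elements, which is exactly why $\{1\}$ is adjoined separately in the statement; no real obstacle is expected beyond this.
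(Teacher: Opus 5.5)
Your proof is correct, but it argues differently from the paper in two ways.

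First, you observe that the lemma as literally stated is trivial. The filter $\{1\}$ is the greatest element of $(\mathcal F(L),\supseteq)$ and belongs to $\mathcal{JF}(L)\cup\{\{1\}\}$, so this subset is directed and cofinal without any appeal to $DCC$. The paper does not notice this. It derives both directedness and cofinality from Proposition~\ref{5.6}, and in the directedness step it falls back on $\{1\}$ only when $F\cap G=\{1\}$. That case really occurs: in the four-element Boolean algebra the two atoms of $\mathcal F(L)$ are join-irreducible with trivial intersection, so $\mathcal{JF}(L)$ by itself is not directed.

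Second, you and the paper reach the substantive claim (every non-trivial filter $F$ contains a join-irreducible filter) by different routes.
\begin{itemize}
\item The paper writes $F=\bigvee_{i=1}^n J_i$ as an irredundant join of join-irreducible filters via Proposition~\ref{5.6}, which depends on distributivity of $\mathcal F(L)$ and the decomposition theorem for distributive lattices with $DCC$. It then picks one joinand.
\item You apply the minimal condition of Proposition~\ref{4.4} to the non-trivial filters below $F$, obtain an atom of $\mathcal F(L)$, and check directly that atoms are join-irreducible.
\end{itemize}
Your route is more elementary, since it needs neither distributivity nor uniqueness of decompositions. It also gives slightly more: the join-irreducible filter can be chosen to be an atom. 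Applied to $F\cap G$, it recovers the paper's directedness step in the non-trivial case. The paper's route only buys a connection to the decomposition theory of Proposition~\ref{5.6}, which the lemma does not actually need.
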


\begin{proof}
Suppose that $F,G\in \mathcal{JF}(L)$. If $F\cap G=\{1\}$, then there is nothing to do, otherwise, without loss of generality we can assume that $F\cap G$ is not a join-irreducible filter, thus, by Proposition \ref{5.6}, we
have $F\cap G=\bigvee_{i=1}^{n}F_{i}$,
where $F_{i}\in\mathcal{JF}(L)$. It follows that $F_{i}\subset F\cap G$, whence $(\mathcal{JF}(L),\supseteq)$ is a directed poset. Let $F$ be a filter of $L$. Then, by Proposition \ref{5.6}, there are join-irreducible filters
$J_{1},\ldots,J_{n}$ such that $F=\bigvee_{i=1}^{n}J_{i}$. Thus there some join-irreducible filter $J_{j}$ such that $J_{j}\subseteq F$. Therefore $(\mathcal{JF}(L),\supseteq)$ is cofinal in $(\mathcal{F}(L),\supseteq)$.
\end{proof}


\begin{theorem}\label{5.11}
Let $L$ be a finite residuated lattice. Then $$\widehat{L}=\varprojlim_{F\in \mathcal{JF}(L)\cup\{\{1\}\}}L/F.$$
\end{theorem}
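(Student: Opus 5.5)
For a finite residuated lattice $L$ every congruence has finite index. So the index set in the definition of $\widehat{L}$ is all of $Con(L)$. Under Theorem \ref{4.1} this is $(\mathcal{F}(L),\supseteq)$, with $L/\theta_F=L/F$ and the canonical projections $\varphi_{FG}(x/F)=x/G$ for $F\subseteq G$. Hence
\[
\widehat{L}=\varprojlim_{F\in\mathcal{F}(L)}L/F .
\]
The plan is to pass to the subsystem indexed by $\mathcal{JF}(L)\cup\{\{1\}\}$ and invoke Proposition \ref{3.3}. That requires two facts about this subset of the directed poset $(\mathcal{F}(L),\supseteq)$: it is itself directed, and it is cofinal.

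A finite residuated lattice has only finitely many filters, so it satisfies $DCC$ (Example \ref{4.2a}(a)). Lemma \ref{5.10} therefore applies and already asserts cofinality of $\mathcal{JF}(L)\cup\{\{1\}\}$ in $(\mathcal{F}(L),\supseteq)$. I would not rely on the lemma's proof as written, because it is sketchy about directedness. The cleanest route is to note that $\{1\}$ itself belongs to the index set and is the greatest element of $(\mathcal{F}(L),\supseteq)$, since $\{1\}\subseteq F$ for every filter $F$. This single observation gives both facts at once. Any two members of $\mathcal{JF}(L)\cup\{\{1\}\}$ have the common upper bound $\{1\}$ in the order $\supseteq$, so the subset is directed. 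Every $F\in\mathcal{F}(L)$ satisfies $F\supseteq\{1\}$, so the subset is cofinal.

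Proposition \ref{3.3} then identifies the two inverse limits, which yields the displayed equality. As a sanity check I would also compute directly. Because $\{1\}$ is the top index of both systems, each limit is isomorphic to $L/\{1\}\cong L$ via projection to that coordinate. In particular, the canonical map $e_L$ is an isomorphism, consistent with $L$ being finite.

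The only real obstacle is bookkeeping of orders. One must check that the paper's convention for directedness of the index set ($\supseteq$, so that smaller filters are ``larger'' indices) matches the cofinality condition of Proposition \ref{3.3}. One must also check that the bonding maps of the subsystem are the restrictions $\varphi_{FG}$, so that it is genuinely a cofinal subsystem in the sense preceding that proposition. Once the direction is fixed so that $\{1\}$ is the maximum, everything is immediate.
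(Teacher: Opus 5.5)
Your proof is correct. It shares the paper's skeleton: identify $\widehat{L}$ with $\varprojlim_{F\in\mathcal{F}(L)}L/F$ and pass to a cofinal subsystem via Proposition \ref{3.3}. Where you differ is in how you justify directedness and cofinality.

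The paper gets these from Lemma \ref{5.10}. That lemma rests on $DCC$ (automatic for finite $L$) and on the decomposition of filters into irredundant joins of join-irreducible filters (Proposition \ref{5.6}). You instead note that the adjoined index $\{1\}$ is the greatest element of $(\mathcal{F}(L),\supseteq)$, which gives both properties at once.

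Your route buys three things. First, it is self-contained. Second, it avoids the loose step in the proof of Lemma \ref{5.10}: that proof concludes that $(\mathcal{JF}(L),\supseteq)$ itself is directed, which fails when $F\cap G=\{1\}$, since no join-irreducible filter lies inside $\{1\}$. Third, it makes explicit something the paper's route hides. Both limits are just $L/\{1\}\cong L$, via projection to the top coordinate, so the join-irreducible filters contribute nothing once $\{1\}$ is in the index set.

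The same observation proves the conclusion of Lemma \ref{5.10} for every residuated lattice. So neither finiteness nor $DCC$ is actually needed for that step.

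Your bonding maps $x/F\mapsto x/G$ for $F\subseteq G$ are also the right ones. The paper's displayed $\varphi_{\theta'\theta}$ in the definition of $\widehat{A}$ has its direction reversed.
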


\begin{proof}
Since every finite residuated lattice satisfies the descending chain condition, the theorem follows from Proposition \ref{3.3} and Lemma \ref{5.10}.
\end{proof}

\section{The existence of Hausdorff topological residuated lattices}
In this section, we investigate linear topological residuated lattices and discuss the existence of Hausdorff topological residuated lattices.\\

Now, we consider a special system of filters of a residuated lattice $L$, namely the set of all filters of finite index, denoted by $\mathcal{F}(L)_\mathrm{fin}$. This is a down-directed set: if $F,G\in\mathcal F(L)_\mathrm{fin}$, then $F\cap G$ is a filter and $L/(F\cap G)$ embeds into $L/F\times L/G$, so $F\cap G$ again has finite index. Thus, $(L,\mathcal{T}_{\mathcal{F}(L)_\mathrm{fin}})$ is a zero-dimensional linear topological residuated lattice. Like the linear topology of abelian groups \cite{Fuchs}, this linear topology in a residuated
lattice $L$ is called a \emph{finite index topology} of $L$. Recall that an algebra $A$ is \emph{finitely approximable} if $A$ is isomorphic to a subalgebra of a product of finite algebras \cite{Malcev}, the class of
finitely approximable residuated lattices is denoted by $\mathbf{Ap}\mathcal{RL}$. It follows that
$A$ is finitely approximable iff $A$ is a subdirect product of its finite homomorphic images.
Thus we have that a residuated lattice $L$ is finitely approximable if and only if $\bigcap\mathcal{F}(L)_\mathrm{fin}=\{1\}$.\\

 As a consequence, we have the following characterization of finitely approximable residuated lattices by
using zero-dimensional linear topological residuated lattices.

\begin{theorem}\label{8.2}
A residuated lattice is finitely approximable if and only if its finite index topology is Hausdorff.
\end{theorem}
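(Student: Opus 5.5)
The plan is to reduce the statement to Theorem \ref{4.7}, applied to the system $\mathcal F=\mathcal F(L)_{\mathrm{fin}}$ of all filters of finite index. First I check that $\mathcal F(L)_{\mathrm{fin}}$ is a system of filters in the sense of Definition \ref{2.1}. It is nonempty, since $L$ itself has finite index ($L/L$ is trivial). It is down-directed, because $L/(F\cap G)$ embeds into $L/F\times L/G$, as noted just before the statement. Hence the finite index topology $\mathcal T_{\mathcal F(L)_{\mathrm{fin}}}$ is defined, and Theorem \ref{4.7} gives that it is Hausdorff if and only if $\bigcap\mathcal F(L)_{\mathrm{fin}}=\{1\}$.

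It then remains to show that $L$ is finitely approximable exactly when $\bigcap\mathcal F(L)_{\mathrm{fin}}=\{1\}$.

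\emph{Forward direction.} Suppose $L$ embeds via $h$ into a product $\prod_{j}A_j$ of finite residuated lattices. Let $a\neq1$. Then $h(a)\neq h(1)$, so some coordinate satisfies $\pi_j h(a)\neq 1$. The filter $\ker(\pi_j\circ h)$ corresponds by Theorem \ref{4.1} to a congruence whose quotient $L/\ker(\pi_j\circ h)\cong \pi_j h(L)\subseteq A_j$ is finite. So this filter lies in $\mathcal F(L)_{\mathrm{fin}}$ and omits $a$.

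\emph{Converse.} If $\bigcap\mathcal F(L)_{\mathrm{fin}}=\{1\}$, Proposition \ref{4.6} makes $L$ a subdirect product of the finite algebras $L/F$ for $F\in\mathcal F(L)_{\mathrm{fin}}$. In particular $L$ embeds into a product of finite residuated lattices.

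The only point needing care is that the finite factors in the definition of finite approximability are residuated lattices, so that kernels give filters. Subalgebras and homomorphic images of residuated lattices stay in the variety, so this is immediate. I therefore expect no real obstacle: the proof is a direct combination of Theorem \ref{4.7}(i)$\Leftrightarrow$(iv) with Proposition \ref{4.6}.
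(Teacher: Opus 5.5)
Your proposal is correct and follows the paper's route: the paper's proof simply combines Theorem \ref{4.7}, (i)$\Leftrightarrow$(iv), for the system $\mathcal F(L)_{\mathrm{fin}}$ with the preceding remark that $L$ is finitely approximable iff $\bigcap\mathcal F(L)_{\mathrm{fin}}=\{1\}$. You supply the details of that remark, which the paper leaves implicit: kernels of the coordinate maps give one direction, and Proposition \ref{4.6} gives the other.
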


\begin{proof}
It follows from the above statement and Theorem \ref{4.7}.
\end{proof}

Let $(L,\mathcal{U})$ be a linear topological residuated lattice. Then there exists a base $\beta$ for the
topology $\mathcal{U}$ on $L$ such that any element containing 1 of $\beta$ is a filter of $L$. Let $X$ be the set of all elements containing
1 of $\beta$, $Y$ be the set of all finite intersections of elements of $X$. Then $Y$ is a system of filters of $L$. The topology induced by the system $Y$ of filters is denoted by $\mathcal{V}$.\\

In what follows, we give a correction to \cite[Theorem 3.7]{Yang}, which answers an open problem raised in \cite{Zahiri}. Roughly speaking, the open problem is whether every linear topological residuated lattice (or BL-algebra) can be induced by a system of filters. For more details, we refer the reader to \cite[Remark 3.5]{Zahiri} or \cite[Introduction]{Yang}.
The main issue with the proof of \cite[Theorem 3.7]{Yang} is that, given a system of filters in a residuated lattice--even one closed under finite intersections--such a system does not necessarily admit a minimal element (let alone a least element). We construct an explicit counterexample below.

\begin{example}\label{7.14}
For each $a\in[0,1)$, the set $(a,1]$ is a filter of the G\"odel algebra $\mathcal I$.
Moreover, for any finite family $\{(a_i,1]\}_{i=1}^n\subseteq\mathcal F$, one has
\[
\bigcap_{i=1}^n (a_i,1]=(\max\{a_1,\dots,a_n\},1]\in\mathcal F.
\]
Hence $\mathcal F$ is closed under finite intersections. However, $\mathcal F$ is not closed under arbitrary intersections. Indeed,
\[
\bigcap_{n=1}^\infty \left(1-\frac1n,1\right]=\{1\}\notin\mathcal F.
\]
\end{example}

\begin{remark}\label{7.14a}
Finite intersection closure only guarantees that the intersection of finitely many filters lies in the set, but it does not constrain whether the limit of an infinite descending chain (infinite intersection) belongs to the set. This gap allows the construction of filter sets with no minimal elements.
The role of the descending chain condition is precisely to remove this gap.  By Proposition \ref{4.4}, if a residuated lattice $L$ satisfies $DCC$, then every system of filters of $L$ has a least element.  Hence the infinite-intersection step used in the proof of \cite[Theorem 3.7]{Yang} can be replaced by a single least filter.  We record the corrected form below and include the details, in order not to rely on the unrestricted version of that theorem.
\end{remark}

\begin{theorem}\label{5.6a}
Let $L$ be a residuated lattice satisfying the descending chain condition. If $(L,\mathcal U)$ is a zero-dimensional linear topological residuated lattice, then there exists a filter $F_0$ of $L$ such that
\[
\mathcal U=\mathcal T_{F_0}.
\]
In particular, on a residuated lattice satisfying $DCC$, every zero-dimensional linear topology is induced by a single filter, and hence by a singleton system of filters.
\end{theorem}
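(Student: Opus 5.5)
The plan is to reduce $\mathcal U$ to a filter-induced topology $\mathcal V=\mathcal T_Y$ as in the construction preceding Example \ref{7.14}, and then use $DCC$ to collapse $Y$ to a single filter. Fix a base $\beta$ for $\mathcal U$ witnessing linearity. Let $X$ be the members of $\beta$ containing $1$; these are filters. Let $Y$ be the set of finite intersections of members of $X$. Then $Y$ is a system of filters, and each member of $Y$ is an open neighbourhood of $1$ in $\mathcal U$. By Proposition \ref{4.4}, $DCC$ gives a least element $F_0\in Y$. I will show $\mathcal U=\mathcal T_{F_0}$.

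First, $F_0$ is the smallest neighbourhood of $1$. Every open $U\ni 1$ contains some $B\in\beta$ with $1\in B$, so $B\in X\subseteq Y$ and $F_0\subseteq B\subseteq U$. Also $F_0$ is itself open, being a finite intersection of open sets.

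Next, transport this to an arbitrary point $x$; I expect this to be the main obstacle. For $\mathcal U\subseteq\mathcal T_{F_0}$, take $U\in\mathcal U$ and $x\in U$. I want $x/F_0\subseteq U$. For $y\in x/F_0$ put $d(x,y)=(x\to y)\odot(y\to x)\in F_0$. The cleanest route is continuity of the operations in the topological residuated lattice $(L,\mathcal U)$. Since $1\odot x=x$, joint continuity of $(u,v)\mapsto u\odot v$ at $(1,x)$ gives open sets $V\ni1$ and $W\ni x$ with $V\odot W\subseteq U$, and $V\supseteq F_0$.

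This does not yet capture all of $x/F_0$, so I would use the terms $t_x(u)=(u\to x)\wedge\ldots$ instead. Concretely, for $y\in x/F_0$ one has $y=((x\to y)\odot x)\vee y$ and related identities. The simplest device is the map $u\mapsto (u\odot x)\vee(u\odot y)$, hmm. Rather, I would argue via closedness. Each coset $z/F_0$ is open, because it is the preimage of the open set $F_0$ under the continuous map $w\mapsto d(z,w)$. It suffices to show that every open set is a union of $F_0$-cosets. If $U$ is open, $x\in U$, and $y\in x/F_0\setminus U$, consider the closed set $C=L\setminus U$; here I use zero-dimensionality and pick a clopen basic $B$ with $x\in B\subseteq U$.

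The key claim is then that a clopen set $B$ is saturated for $\theta_{F_0}$. The map $u\mapsto d(x,u)$ sends $x$ to $1$, and this is the step I will need to work hardest on. I would try showing that $\{u: x/F_0\ni u\}$ meets both $B$ and its complement only if $F_0$ is split by a clopen set containing $1$. Intersecting that clopen set with $F_0$ would give an open neighbourhood of $1$ strictly inside $F_0$, contradicting minimality. To move from the coset at $x$ to $1$, I would use the bijection $u\mapsto d(x,u)$ restricted suitably, or the translation $u\mapsto x\to u$ together with $u\mapsto u\odot x$, adjusting the choice of term as needed.

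The reverse inclusion $\mathcal T_{F_0}\subseteq\mathcal U$ is immediate, since each coset $x/F_0$ is $\mathcal U$-open as noted. The final sentence of the statement follows because $\mathcal T_{F_0}=\mathcal T_{\{F_0\}}$.
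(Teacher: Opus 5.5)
Your setup matches the paper's skeleton. $F_0$, the least member of $Y$ given by Proposition \ref{4.4}, is the smallest open neighbourhood of $1$, every coset $z/F_0$ is open, and hence $\mathcal T_{F_0}\subseteq\mathcal U$. The gap is the other inclusion $\mathcal U\subseteq\mathcal T_{F_0}$, which is the real content of the theorem. You reduce it to the claim that every open (or clopen) set is $\theta_{F_0}$-saturated, and then stop with only candidate devices.

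The device you lean on points the wrong way. The map $u\mapsto d(x,u)$ sends $x$ to $1$, so continuity only says that preimages of neighbourhoods of $1$ are neighbourhoods of $x$. That is exactly the easy fact that cosets are open. Continuity says nothing about images of clopen sets under this map, and the map is not injective on cosets (in the G\"odel chain it is constantly $x$ on $(x,1]$). To move information from a neighbourhood of $x$ to a neighbourhood of $1$, you need continuous maps $g$ with $g(1)=x$ and must pull back along them. Your first attempt, continuity of $\odot$ at $(1,x)$, is of that kind. But, as you noticed, it only reaches elements $e\odot x\le x$.

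The missing idea is to pass through $x\wedge y$ using two such unary polynomials. Let $x\in U\in\mathcal U$ and $y\in x/F_0$, so that $x\to y$ and $y\to x$ both lie in $F_0$.

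First put $p(u)=(x\odot u)\vee(x\wedge y)$. Then $p(1)=x\in U$, so $p^{-1}(U)$ is an open neighbourhood of $1$ and therefore contains $F_0$. Since $x\odot(x\to y)\le x\wedge y$ by $(R_4)$ and $(R_5)$, you get $x\wedge y=p(x\to y)\in U$.

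Next put $q(u)=y\wedge(u\to x)$. Then $q(1)=x\wedge y\in U$, so again $F_0\subseteq q^{-1}(U)$. Since $y\le (y\to x)\to x$, you get $y=q(y\to x)\in U$. Hence $x/F_0\subseteq U$, and so $\mathcal U\subseteq\mathcal T_{F_0}$.

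The map $q$ produces exactly the contradiction you were aiming for: if $y\notin U$, then $q^{-1}(U)$ would be an open neighbourhood of $1$ missing a point of $F_0$. This argument does not even need zero-dimensionality. In the paper this step is display (2), $\bigcap_{F\in Y_{\mathcal U}}x/F\subseteq O$, imported from the closure calculation of the earlier paper. Your proposal has no substitute for it, so as written it establishes only $\mathcal T_{F_0}\subseteq\mathcal U$.
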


\begin{proof}
Choose a base $\beta$ for the linear topology $\mathcal U$ such that every member of $\beta$ containing $1$ is a filter. Put
\[
X_{\mathcal U}=\{F\in\beta:1\in F\},
\]
and let $Y_{\mathcal U}$ be the family of all finite intersections of members of $X_{\mathcal U}$. Then $Y_{\mathcal U}$ is a system of filters of $L$. Let
\[
\mathcal V=\mathcal T_{Y_{\mathcal U}}.
\]

We first prove that $\mathcal V\subseteq\mathcal U$. Indeed, every member of $Y_{\mathcal U}$ is a $\mathcal U$-open filter. If $F\in Y_{\mathcal U}$ and $x\in L$, then Lemma \ref{7.19} gives that the coset $x/F$ is $\mathcal U$-open. Since the sets $x/F$, with $F\in Y_{\mathcal U}$ and $x\in L$, form a base for $\mathcal V$, it follows that
\[
\mathcal V\subseteq\mathcal U.
\]

We now prove the reverse inclusion. Since $L$ satisfies $DCC$, Proposition \ref{4.4} implies that the filter system $Y_{\mathcal U}$ has a least element; denote it by $F_0$. Thus
\[
F_0\subseteq F\qquad(F\in Y_{\mathcal U}).
\]
Consequently, for every $x\in L$,
\[
x/F_0\subseteq x/F\qquad(F\in Y_{\mathcal U})
\]
and therefore
\[
\bigcap_{F\in Y_{\mathcal U}}x/F=x/F_0.        \tag{1}
\]

Let $O$ be a clopen member of a zero-dimensional base of $\mathcal U$ and let $x\in O$. We use only the local part of the proof of \cite[Theorem 3.7]{Yang}, namely the closure calculation
\[
\bigcap_{F\in Y_{\mathcal U}}x/F\subseteq O.        \tag{2}
\]
This local calculation is independent of the global finite-reduction step in that proof. The latter step is not valid in general: one cannot replace an arbitrary intersection of filters by a member of the same filter system without an additional finiteness assumption. In the present theorem this missing step is supplied exactly by $DCC$, because Proposition \ref{4.4} gives a least member $F_0$ of $Y_{\mathcal U}$, and hence (1) turns the arbitrary intersection of cosets into the single coset $x/F_0$.

Combining (1) and (2), we obtain
\[
x/F_0\subseteq O.
\]
Thus every clopen basic neighbourhood $O$ of every point $x$ contains the $\mathcal V$-basic neighbourhood $x/F_0$ of $x$. Since $\mathcal U$ has a clopen base, this proves
\[
\mathcal U\subseteq\mathcal V.
\]
Together with the first inclusion, we get $\mathcal U=\mathcal V$.

Finally, since $F_0$ is the least element of $Y_{\mathcal U}$, the topology induced by the whole system $Y_{\mathcal U}$ coincides with the simple topology induced by $F_0$. Explicitly, if $U\in\mathcal T_{Y_{\mathcal U}}$ and $x\in U$, then for some $F\in Y_{\mathcal U}$ we have $x/F\subseteq U$; as $F_0\subseteq F$, we have $x/F_0\subseteq x/F\subseteq U$. Hence $U\in\mathcal T_{F_0}$. The reverse inclusion is immediate from $F_0\in Y_{\mathcal U}$. Therefore
\[
\mathcal U=\mathcal V=\mathcal T_{Y_{\mathcal U}}=\mathcal T_{F_0}.
\]
This proves the theorem.
\end{proof}

\begin{corollary}\label{5.6b}
Let $L$ be a residuated lattice satisfying the descending chain condition ($DCC$). Then a topology $\mathcal U$ on $L$ is a zero-dimensional linear topology if and only if there exists a system of filters $\mathcal F$ on $L$ such that
\[
\mathcal U=\mathcal T_{\mathcal F}.
\]
Moreover, $\mathcal F$ may be replaced by a single filter; hence $\mathcal U=\mathcal T_F$ for some filter $F$ of $L$.
\end{corollary}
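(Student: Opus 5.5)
The corollary is an ``if and only if'' plus a refinement, and I will prove the two directions separately, relying mostly on Theorem \ref{5.6a} and the construction of $\mathcal T_{\mathcal F}$ from Section 2.

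\textbf{Forward direction.} Suppose $\mathcal U$ is a zero-dimensional linear topology on $L$. Since $L$ satisfies $DCC$, Theorem \ref{5.6a} gives a filter $F_0$ with $\mathcal U=\mathcal T_{F_0}$. The singleton $\{F_0\}$ is trivially down-directed, so it is a system of filters. Taking $\mathcal F=\{F_0\}$ yields both the existence of a system and the ``moreover'' clause at once.

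\textbf{Converse.} Let $\mathcal F$ be any system of filters and $\mathcal U=\mathcal T_{\mathcal F}$. That $\mathcal U$ is linear was recorded after Definition \ref{2.1}: the cosets $x/F$ with $F\in\mathcal F$ form a base, and such a coset contains $1$ iff $x/F=1/F=F$, which is a filter. For zero-dimensionality it suffices to show each basic coset $x/F$ is clopen.
\begin{itemize}
\item It is open by the definition of $\mathcal T_{\mathcal F}$, since for any $y\in x/F$ we have $y/F=x/F$.
\item Its complement is the union of the other cosets $y/F$, each of which is open.
\end{itemize}
Hence the base consists of clopen sets. For the moreover part, $DCC$ and Proposition \ref{4.4} give a least element $F_0$ of $\mathcal F$, and the last paragraph of the proof of Theorem \ref{5.6a} shows $\mathcal T_{\mathcal F}=\mathcal T_{F_0}$. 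Note that $DCC$ is not needed for the converse itself.

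\textbf{Main obstacle.} The only delicate point is definitional. One must check that $\mathcal T_{\mathcal F}$ meets the paper's notion of linear topology, namely a base $\beta$ in which a member is a filter exactly when it contains $1$. I would take $\beta=\{x/F : x\in L,\ F\in\mathcal F\}$. A member containing $1$ equals $F$ and is a filter. A member not containing $1$ is not a filter, since every filter contains $1$. With this, both directions close.
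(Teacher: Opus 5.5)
Your proof is correct and is essentially the paper's argument. The direction from a zero-dimensional linear topology to a filter system is Theorem \ref{5.6a} applied with the singleton system $\{F_0\}$. The converse is the Section 2 construction, where you usefully make explicit that the cosets $x/F$ $(F\in\mathcal F)$ form a clopen base whose members containing $1$ are exactly the filters $F$.

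One caveat applies equally to the paper. Invoking Theorem \ref{5.6a} tacitly reads ``zero-dimensional linear topology'' as one that makes $L$ a topological residuated lattice, since continuity enters through Lemma \ref{7.19}. Under the bare definition of linear topology, the first direction fails. For example, take the three-element G\"odel chain $0<a<1$ with the topology $\{\emptyset,\{1\},\{0,a\},L\}$: it is zero-dimensional and linear in the bare sense, but no system of filters induces it.
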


\begin{proof}
The topology induced by a system of filters is zero-dimensional and linear by the construction in Section 2. Conversely, if $\mathcal U$ is a zero-dimensional linear topology, Theorem \ref{5.6a} gives a filter $F$ of $L$ such that $\mathcal U=\mathcal T_F$. Taking $\mathcal F=\{F\}$ proves the assertion.
\end{proof}

 Let $F$ be a filter of a residuated lattice $L$. We can get a zero-dimensional linear topological residuated lattice $(L,\mathcal{T}_{F})$, where $\mathcal{T}_{F}=\{U\subseteq L: \forall x\in U, x/F\subseteq U\}$, see \cite{Yang}.
For simplicity, such a zero-dimensional linear topological residuated lattice is called a \emph{simple linear topological residuated lattice} when is no danger of confusion. Indeed, such a topology is also called the \emph{$I$-adic topology} on MV-algebras in \cite{Hoo}.\\

\begin{theorem}\label{7.12}
Let $L$ be a residuated lattice satisfying the descending chain condition ($DCC$). Then each zero-dimensional linear topology $\mathcal U$ of $L$ is simple. More precisely, there exists a filter $F_0$ of $L$ such that
\[
\mathcal U=\mathcal T_{F_0}.
\]
Equivalently, if $Y_{\mathcal U}$ is the system of finite intersections of open filter-neighbourhoods of $1$, then $Y_{\mathcal U}$ has a least member $F_0$ and
\[
\mathcal U=\mathcal T_{Y_{\mathcal U}}=\mathcal T_{F_0}=\sup\{\mathcal T_F:F\in Y_{\mathcal U}\}.
\]
\end{theorem}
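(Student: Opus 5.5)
Theorem \ref{7.12} is essentially a repackaging of Theorem \ref{5.6a}, so the plan is to reduce to it and then verify the extra equalities. Fix a zero-dimensional linear topology $\mathcal U$ on $L$. First I will choose a base $\beta$ witnessing linearity, let $X_{\mathcal U}$ be the members of $\beta$ containing $1$, and let $Y_{\mathcal U}$ be the family of finite intersections of members of $X_{\mathcal U}$. These are open filters. The family $Y_{\mathcal U}$ is closed under finite intersections, so it is down-directed and hence a system of filters in the sense of Definition \ref{2.1}.

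Since $L$ satisfies $DCC$, Proposition \ref{4.4}(3) gives a least member $F_0$ of $Y_{\mathcal U}$. Theorem \ref{5.6a} then yields $\mathcal U=\mathcal T_{Y_{\mathcal U}}=\mathcal T_{F_0}$. This proves simplicity, together with the first two displayed equalities. The one delicate point, the inclusion $\mathcal U\subseteq\mathcal V$, was already handled in Theorem \ref{5.6a}: arbitrary intersections of cosets collapse to the single coset $x/F_0$. So nothing new is needed there beyond citing that theorem.

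It remains to show $\mathcal T_{F_0}=\sup\{\mathcal T_F:F\in Y_{\mathcal U}\}$, where the supremum is the coarsest topology containing every $\mathcal T_F$.
\begin{itemize}
\item For $F\in Y_{\mathcal U}$ we have $F_0\subseteq F$, so $x/F_0\subseteq x/F$ for every $x$. Hence every $\mathcal T_F$-open set is $\mathcal T_{F_0}$-open, i.e. $\mathcal T_F\subseteq\mathcal T_{F_0}$, and therefore $\sup\subseteq\mathcal T_{F_0}$.
\item Conversely, $F_0$ itself belongs to $Y_{\mathcal U}$, so $\mathcal T_{F_0}$ is one of the topologies in the family and lies below the supremum.
\end{itemize}
The two inclusions give equality.

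The remaining "equivalently" clause only restates that $F_0$ is the least member of $Y_{\mathcal U}$, which is exactly how $F_0$ was obtained above. I do not expect any step to be a real obstacle. The only thing to watch is that the definition of $Y_{\mathcal U}$ here agrees with the one used in Theorem \ref{5.6a}, so that the citation applies verbatim.
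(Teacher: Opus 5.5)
Your proposal is correct and is essentially the paper's own proof: you obtain the least member $F_0$ of $Y_{\mathcal U}$ from Proposition~\ref{4.4}, invoke Theorem~\ref{5.6a} for $\mathcal U=\mathcal T_{Y_{\mathcal U}}=\mathcal T_{F_0}$, and get the supremum equality from $F_0\subseteq F$ for all $F\in Y_{\mathcal U}$ together with $F_0\in Y_{\mathcal U}$. One hypothesis in the citation deserves explicit mention, since the statement also leaves it implicit: Theorem~\ref{5.6a} requires $(L,\mathcal U)$ to be a topological residuated lattice, so ``linear topology'' must be read as compatible with the operations. Without compatibility the claim fails: on the three-element G\"odel chain $0<a<1$, the partition topology with blocks $\{1\}$ and $\{0,a\}$ is zero-dimensional and linear in the bare sense, yet it is not of the form $\mathcal T_F$ (and $\rightarrow$ is discontinuous at $(0,a)$).
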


\begin{proof}
This is exactly Theorem \ref{5.6a}. The last equality follows because $F_0\in Y_{\mathcal U}$ and $F_0\subseteq F$ for all $F\in Y_{\mathcal U}$; hence every topology $\mathcal T_F$ with $F\in Y_{\mathcal U}$ is contained in $\mathcal T_{F_0}$, while $\mathcal T_{F_0}$ itself occurs in the displayed family.
\end{proof}

\begin{definition}\label{7.17}
\emph{Let $L$ be a residuated lattice and let $(\mathcal F_1,\subseteq)$ and $(\mathcal F_2,\subseteq)$ be two systems of filters of $L$. We say that $\mathcal F_1$ and $\mathcal F_2$ are \emph{equivalent} if they are coinitial with respect to inclusion: for every $F_1\in\mathcal F_1$ there exists $F_2\in\mathcal F_2$ such that $F_2\subseteq F_1$, and for every $F_2\in\mathcal F_2$ there exists $F_1\in\mathcal F_1$ such that $F_1\subseteq F_2$.}
\end{definition}

\begin{proposition}\label{7.18}
In a residuated lattice $L$, equivalent systems of filters have the same topology.
\end{proposition}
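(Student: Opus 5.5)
We show directly that $\mathcal T_{\mathcal F_1}\subseteq\mathcal T_{\mathcal F_2}$; the reverse inclusion then follows by the symmetric argument, since the definition of equivalence is symmetric in $\mathcal F_1$ and $\mathcal F_2$. The whole proof uses only the definition of $\mathcal T_{\mathcal F}$ and the fact that cosets shrink when the filter shrinks.

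\emph{Monotonicity of cosets.} If $G\subseteq F$ are filters of $L$, then $\theta_G\subseteq\theta_F$. Indeed, $(x,y)\in\theta_G$ means $(x\to y)\odot(y\to x)\in G\subseteq F$. Consequently $x/G\subseteq x/F$ for every $x\in L$.

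\emph{The inclusion.} Let $U\in\mathcal T_{\mathcal F_1}$ and fix $x\in U$. By the definition of $\mathcal T_{\mathcal F_1}$ there is $F_1\in\mathcal F_1$ with $x/F_1\subseteq U$. By equivalence of the two systems there is $F_2\in\mathcal F_2$ with $F_2\subseteq F_1$. Monotonicity then gives
\[
x/F_2\subseteq x/F_1\subseteq U .
\]
Since $x\in U$ was arbitrary, $U\in\mathcal T_{\mathcal F_2}$. This proves $\mathcal T_{\mathcal F_1}\subseteq\mathcal T_{\mathcal F_2}$, and by symmetry the two topologies coincide.

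There is no genuine obstacle. The only point to state explicitly is the monotonicity $G\subseteq F\Rightarrow x/G\subseteq x/F$, which is exactly where coinitiality, rather than mere cofinality in some other sense, is the right notion of equivalence. One might also remark that down-directedness of the systems is not needed for this particular argument; it is used only to ensure that each $\mathcal T_{\mathcal F_i}$ is a topology at all.
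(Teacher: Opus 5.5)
Your proof is correct and follows the same route as the paper's: take $F_1\in\mathcal F_1$ with $x/F_1\subseteq U$, use coinitiality to get $F_2\in\mathcal F_2$ with $F_2\subseteq F_1$, conclude $x/F_2\subseteq x/F_1\subseteq U$, and finish by symmetry. Your explicit check that $G\subseteq F$ implies $\theta_G\subseteq\theta_F$ is a step the paper uses without stating.
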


\begin{proof}
For any $x\in U\in\mathcal{T}_{\mathcal{F}_1}$, then there is $F_1\in\mathcal{F}_1$ such that $x/F_1\subset U$. For $F_1\in\mathcal{F}_1$, by Definition \ref{7.17}, there is $F_2\in\mathcal{F}_2$ such that
$F_2\subseteq F_1$. Thus, we have $x\in x/F_2\subseteq x/F_1\subseteq U$. Hence $\mathcal{T}_{\mathcal{F}_1}\subseteq\mathcal{T}_{\mathcal{F}_2}$. From the symmetrical characteristic of Definition \ref{7.17}, it can be
concluded that $\mathcal{T}_{\mathcal{F}_1}=\mathcal{T}_{\mathcal{F}_2}$.
\end{proof}

\begin{example} \label{4.5} Let $\odot$ and $\rightarrow$ on the real unit interval $I=[0,1]$ be defined as follows:
\begin{center} $x\odot y=\min\{x,y\}$ and
$x\rightarrow y=
\begin{cases}
1, &  x\leq y,\\
y, & otherwise.\\
\end{cases}$
\end{center}
Then $\mathcal{I}=(I,\min,\max,\odot,\rightarrow,0,1)$ is a $BL$-algebra (called the G\"{o}del algebra). For every $a\in[0,1)$, the interval $(a,1]$ is an upward closed subset containing $1$, and it is closed under $\odot=\min$; hence it is a filter. Moreover,
\[
(a,1]\cap(b,1]=(\max\{a,b\},1]
\]
for all $a,b\in[0,1)$, so $\mathcal{F}=\{(a,1]\mid a\in[0,1)\}$ is a system of filters. Since
\[
(0,1]\supsetneq (1/2,1]\supsetneq (2/3,1]\supsetneq\cdots,
\]
$\mathcal I$ does not satisfy $DCC$.
\end{example}

Using the above example, we can identify the precise gap in \cite[Theorem 3.10]{Yang}: the proof replaces a given filter system by a larger family obtained using arbitrary intersections, but the two systems need not be equivalent. In fact, in the proof of \cite[Theorem 3.10]{Yang}, suppose that $(L,\mathcal{T})$ is a linear topological residuated lattice such that $(L,\mathcal{T})$ is a zero-dimensional space. Then there exists a base $\beta$ for the topology $\mathcal{T}$ on $L$ such that any element containing 1 of $\beta$ is a filter of $L$. Let $X$ be the set of all elements containing 1 of $\beta$ and $Y$ be the set of all intersections of elements of $X$. Then $(X,\subseteq)$ and $(Y,\subseteq)$ are two systems of filters of a residuated lattice $L$. In general, they are not equivalent. Then the topologies induced by them are not same. For example,
taking $X=\mathcal{F}$ in Example \ref{4.5} and using the construction of $Y$, we have $\{1\}\in Y$. Thus, $\mathcal{T}_{Y}$ is a discrete topology. But $\mathcal{T}_{X}$ is not a discrete topology. Thus, $\mathcal{T}_{X}\neq\mathcal{T}_{Y}$, that is, $\mathcal{T}\neq\mathcal{V}$ in the proof of \cite[Theorem 3.10]{Yang}.

This example also shows that, without a finiteness assumption such as $DCC$, the canonical map from filters to zero-dimensional linear topologies need not be surjective. Indeed, in the topology $\mathcal T_X$ every point $x<1$ is isolated, because one may choose $a\in(x,1)$ and then $x/(a,1]=\{x\}$. On the other hand, $1$ is not isolated, since every basic neighbourhood of $1$ contains some interval $(a,1]$ with $a<1$. We claim that $\mathcal T_X$ is not equal to $\mathcal T_G$ for any single filter $G$ of the G\"odel algebra. If $G=\{1\}$, then $\mathcal T_G$ is discrete, contrary to the fact that $1$ is not isolated in $\mathcal T_X$. If $G$ contains some element $c<1$, then the point $c$ is not isolated in $\mathcal T_G$, since $c/G=G$ contains elements larger than $c$. This contradicts the fact that every point below $1$ is isolated in $\mathcal T_X$. Hence $\mathcal T_X$ is a zero-dimensional linear topology which is not induced by a single filter. Therefore the mapping $\varphi$ from $\mathcal{F}(L)$ to $ZLTRL(L)$ defined by $\varphi(F)=(L,\mathcal{T}_{F})$ for all $F$ in $\mathcal{F}(L)$ is not surjective in this example, where $ZLTRL(L)$ denotes the set of all zero-dimensional linear topological residuated lattices on the underlying residuated lattice $L$, see \cite{Yang}.

Next, we will correct \cite[Theorem 3.10]{Yang} in the following theorem.

\begin{theorem}\label{7.16}
Let $L$ be a residuated lattice satisfying the descending chain condition. Then the canonical map
\[
\Phi:\mathcal F(L)\longrightarrow ZLTRL(L),\qquad F\longmapsto (L,\mathcal T_F),
\]
from filters to zero-dimensional linear topological residuated lattices on the underlying set $L$ is bijective.
\end{theorem}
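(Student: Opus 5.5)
Bijectivity of $\Phi$ splits into two claims. Surjectivity is essentially already available; injectivity is the new content and needs no use of $DCC$.

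\emph{Surjectivity.} Let $(L,\mathcal U)$ be a zero-dimensional linear topological residuated lattice. Theorem \ref{5.6a} (equivalently Theorem \ref{7.12}) gives a filter $F_0$ with $\mathcal U=\mathcal T_{F_0}$, so $\Phi(F_0)=(L,\mathcal U)$. This is the only place where $DCC$ enters, through Proposition \ref{4.4}. We must also check that $\Phi$ is well defined, i.e.\ that $(L,\mathcal T_F)$ really lies in $ZLTRL(L)$. The cosets $x/F$ form a partition of $L$ into open sets, so each coset is also closed. Hence $\mathcal T_F$ has a clopen base. It is linear because the only basic set containing $1$ is $1/F=F$, which is a filter. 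Finally, Theorem \ref{2.7} applied to the one-element system $\{\theta_F\}$ shows that the operations are continuous.

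\emph{Injectivity.} Suppose $\mathcal T_F=\mathcal T_G$ for filters $F,G$. The key observation is that in $\mathcal T_F$ the smallest open set containing $1$ is exactly $F$. Indeed, $F=1/F$ is open. Conversely, if $U$ is open and $1\in U$, the definition of $\mathcal T_F$ forces $1/F=F\subseteq U$. So $F$ is intrinsically recovered from the topology as the intersection of all open neighbourhoods of $1$. The same description applied to $\mathcal T_G$ yields $G$, and therefore $F=G$.

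The step needing the most care is not difficult. It is making sure $ZLTRL(L)$ is read as the set of topologies on the fixed set $L$, so that equality $\Phi(F)=\Phi(G)$ means literal equality $\mathcal T_F=\mathcal T_G$ rather than mere topological isomorphism. Under isomorphism, injectivity could fail, for example via automorphisms of $L$ carrying one filter to another. The statement says ``on the underlying set $L$'', and I would make this reading explicit at the start of the proof.
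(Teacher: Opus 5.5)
Your proposal is correct and follows essentially the same approach as the paper's proof. Both argue that $\Phi$ is well defined, that it is injective because $F=1/F$ is the least open neighbourhood of $1$ in $\mathcal T_F$, and that it is surjective directly from Theorem \ref{5.6a}, which is where $DCC$ enters; your check that $\mathcal T_F$ is zero-dimensional, linear and has continuous operations (via Theorem \ref{2.7} for $\theta_F$) just spells out the paper's one-line well-definedness remark.
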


\begin{proof}
For every filter $F$, the topology $\mathcal T_F$ is zero-dimensional and linear, so $\Phi$ is well defined. It is injective because $F=1/F$ is the least neighbourhood of $1$ in the topology $\mathcal T_F$; hence $\mathcal T_F=\mathcal T_G$ implies $F=G$. For surjectivity, let $(L,\mathcal U)$ be a zero-dimensional linear topological residuated lattice. By Theorem \ref{5.6a}, there exists a filter $F_0$ of $L$ such that $\mathcal U=\mathcal T_{F_0}$. Thus $(L,\mathcal U)$ lies in the image of $\Phi$.
\end{proof}

\begin{remark}\label{7.16a}
Theorem \ref{7.16} is a statement about the canonical map $F\mapsto\mathcal T_F$. It is not a purely cardinality statement: equality of cardinalities alone does not imply that this particular map is surjective.
\end{remark}

\begin{theorem}\label{7.22}
Let $L$ be a non-trivial residuated lattice satisfying the descending chain condition. Then $L$ is subdirectly irreducible if and only if it has a largest non-trivial zero-dimensional linear topology.
\end{theorem}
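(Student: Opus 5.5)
The plan is to reduce the statement to a purely filter-theoretic one through the bijection of Theorem \ref{7.16}. Here ``non-trivial'' is read as ``non-discrete'': the discrete topology $\mathcal T_{\{1\}}$ is always the finest zero-dimensional linear topology, so ``largest'' is only meaningful once it is excluded. With that reading, the claim should follow once the order on topologies is matched with the reverse inclusion order on filters.

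\emph{Step 1: the bijection is order-reversing.} For filters $F,G$ of $L$ I would show
\[
\mathcal T_F\subseteq\mathcal T_G \iff G\subseteq F .
\]
If $G\subseteq F$, then $x/G\subseteq x/F$ for every $x$. Hence every $\mathcal T_F$-open set is $\mathcal T_G$-open.

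Conversely, assume $\mathcal T_F\subseteq\mathcal T_G$. Then $F=1/F$ is $\mathcal T_F$-open, hence $\mathcal T_G$-open and contains $1$. So $1/G=G\subseteq F$, since $G$ is the least $\mathcal T_G$-neighbourhood of $1$, as in the injectivity part of Theorem \ref{7.16}.

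By Proposition \ref{4.9a}(i), $\mathcal T_F$ is discrete exactly when $F=\{1\}$. Combining this with Theorem \ref{7.16} (which uses $DCC$), $F\mapsto\mathcal T_F$ becomes an order anti-isomorphism
\[
\bigl(\mathcal F(L)\setminus\{\{1\}\},\subseteq\bigr)\;\longrightarrow\;\bigl(\text{non-discrete zero-dimensional linear topologies on }L,\subseteq\bigr).
\]

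\emph{Step 2: conclude.} An anti-isomorphism carries a least element to a largest element and back. So $L$ has a largest non-discrete zero-dimensional linear topology if and only if $\mathcal F(L)\setminus\{\{1\}\}$ has a least element. By the remark preceding Lemma \ref{4.12} (via Theorem \ref{4.1}), the latter holds for nontrivial $L$ if and only if $L$ is subdirectly irreducible.

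Explicitly, if $F_0$ is the least non-trivial filter, then $\mathcal T_{F_0}$ is the required largest topology. Conversely, if $\mathcal U$ is largest, Theorem \ref{5.6a} gives $\mathcal U=\mathcal T_{F_0}$ for some filter $F_0\neq\{1\}$, and Step 1 shows $F_0\subseteq F$ for every non-trivial filter $F$.

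\emph{Main obstacle.} The only delicate point is surjectivity: every non-discrete zero-dimensional linear topology must be of the form $\mathcal T_F$, otherwise a largest topology would not yield a filter. This is exactly where $DCC$ enters, through Theorem \ref{5.6a} and Theorem \ref{7.16}; I would simply invoke those results. I would also state explicitly at the start of the proof that ``non-trivial'' means ``non-discrete''.
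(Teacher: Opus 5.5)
Your proposal is correct and follows essentially the same route as the paper. Both arguments identify zero-dimensional linear topologies with the topologies $\mathcal T_F$ via Theorem \ref{7.16}, use the order reversal $\mathcal T_G\subseteq\mathcal T_F\iff F\subseteq G$, and match a largest non-discrete topology with a least non-trivial filter; your version additionally proves the order reversal and states explicitly that ``non-trivial'' means ``non-discrete'', both of which the paper leaves implicit.
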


\begin{proof}
By Theorem \ref{7.16}, zero-dimensional linear topologies on $L$ are exactly the topologies $\mathcal T_F$ with $F\in\mathcal F(L)$. Moreover, for filters $F,G$ one has
\[
\mathcal T_G\subseteq\mathcal T_F \quad\text{if and only if}\quad F\subseteq G.
\]
If $L$ is subdirectly irreducible, let $F_0$ be its least non-trivial filter. Then $F_0\subseteq G$ for every non-trivial filter $G$, and hence $\mathcal T_G\subseteq\mathcal T_{F_0}$. Thus $\mathcal T_{F_0}$ is the largest non-trivial zero-dimensional linear topology. Conversely, if $\mathcal T_F$ is largest among non-trivial zero-dimensional linear topologies, then the displayed equivalence gives $F\subseteq G$ for every non-trivial filter $G$. Hence $F$ is the least non-trivial filter, so $L$ is subdirectly irreducible.
\end{proof}

\begin{lemma}\label{7.19}
Let $(L,\mathcal{U})$ be a topological residuated lattice. If $F$ is a filter of $L$, then $F$ is open (closed) if and only if $a/F$ is open (closed) for any $a\in L$.
\end{lemma}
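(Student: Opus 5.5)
The plan is to prove Lemma \ref{7.19} by writing each coset $a/F$ as a preimage of $F$ under a continuous map, and then using $F=1/F$ for the converse. Both directions use only that $(L,\mathcal U)$ is a topological residuated lattice, i.e.\ that $\odot$ and $\rightarrow$ are jointly continuous.

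\emph{The direction ``$a/F$ open (closed) for all $a$ $\Rightarrow$ $F$ open (closed)'' is immediate.} Take $a=1$. We have $(1\to y)\odot(y\to 1)=y\odot 1=y$ by $(R_1)$, so $1/F=\{y:y\in F\}=F$.

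\emph{For the converse, fix $a\in L$ and define $g_a:L\to L$ by $g_a(y)=(a\to y)\odot(y\to a)$.} This $g_a$ is the composition of the continuous maps $y\mapsto(a\to y,\,y\to a)$ from $L$ to $L\times L$ and $\odot:L\times L\to L$. The first map is continuous because each coordinate is $\rightarrow$ with one argument fixed, and fixing an argument of a jointly continuous map yields a continuous map. Hence $g_a$ is continuous. By definition of $\theta_F$,
\[
a/F=\{y\in L:(a\to y)\odot(y\to a)\in F\}=g_a^{-1}(F).
\]
So if $F$ is open, $a/F$ is open, and if $F$ is closed, $a/F$ is closed. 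This is exactly the argument already used in the proof of Theorem \ref{3.10}, now carried out separately for ``open'' and ``closed''.

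\emph{Optional remark for the open case.} One could also observe that an open filter makes every coset open, and that each coset is then the complement of the union of the other (open) cosets. Hence an open filter is automatically clopen. This is not needed for the statement, so I would include it only as an aside.

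\emph{Expected obstacle.} There is no real obstacle. The only point needing care is justifying continuity of $g_a$ from the product-topology continuity of the binary operations, via the continuous pairing map into $L\times L$. With that settled, the lemma follows.
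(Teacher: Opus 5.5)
Your proof is correct and follows essentially the paper's argument: the paper also writes $a/F$ as a preimage of $F$ under continuous maps and uses $F=1/F$ for the converse. The only cosmetic difference is the form of the preimage. The paper writes $a/F=l_a^{-1}(F)\cap g_a^{-1}(F)$ with $l_a(x)=a\rightarrow x$ and $g_a(x)=x\rightarrow a$, implicitly using that $u\odot v\in F$ iff $u,v\in F$. You instead use the single composite map $y\mapsto(a\rightarrow y)\odot(y\rightarrow a)$, which additionally relies on continuity of $\odot$.
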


\begin{proof}
There is no loss of generality in assuming that $F$ is an open filter of $L$. Let $a\in L$. We define two maps $l_{a}:L\rightarrow L$ and $g_{a}:L\rightarrow L$ by $l_{a}(x)=a\rightarrow x$ and $g_{a}(x)=x\rightarrow a$
for any $x\in L$. Then clearly, $l_{a}$ and $g_{a}$ are continuous maps and so $l_{a}^{-1}(F)$, $g_{a}^{-1}(F)$ are open in $(L,\mathcal{U})$. Since $a/F=\{x\in L:x\rightarrow a\in F ~and~a\rightarrow x\in F\}=l_{a}^{-1}(F)\cap g_{a}^{-1}(F)$, then we have $a/F\in\mathcal{U}$. Conversely, it follows the fact $F=1/F$.

\end{proof}

We state next an easy consequence of compactness which is similar to topological groups \cite{Ribes}.

\begin{theorem}\label{7.20}
In a compact zero-dimensional linear topological residuated lattice $(L,\mathcal{U})$, a filter $G$ is open if and only if $G$ is closed and of finite index.
\end{theorem}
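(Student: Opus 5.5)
The plan is to mimic the argument for open subgroups of compact groups, with Lemma \ref{7.19} translating between a filter $G$ and its cosets $a/G$.

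\emph{Forward direction.} Suppose $G$ is open. By Lemma \ref{7.19}, every coset $a/G$ is open. The cosets partition $L$, so each coset is the complement of the union of the others. Taking $a=1$, the filter $G=1/G$ is closed.

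The cosets $\{a/G : a\in L\}$ form a cover of $L$ by pairwise disjoint open sets. By compactness this cover has a finite subcover. Since the sets are pairwise disjoint and nonempty, no member of the cover can be omitted from a subcover. Hence there are only finitely many cosets, that is, $L/G$ is finite and $G$ has finite index.

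\emph{Converse.} Suppose $G$ is closed and of finite index. By Lemma \ref{7.19}, each coset $a/G$ is closed. Write $L=a_1/G\cup\cdots\cup a_n/G$ with pairwise distinct cosets. Then
\[
G=1/G=L\setminus\bigcup_{a_i/G\neq 1/G}a_i/G .
\]
This is the complement of a finite union of closed sets, so $G$ is open.

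Note that zero-dimensionality and linearity are not needed; only compactness and the continuity of the operations (packaged in Lemma \ref{7.19}) are used. The only point needing care is that "finite index" is read as $|L/G|<\infty$, i.e. finitely many cosets $a/G$. The compactness step relies on the fact that a disjoint cover by nonempty sets has no proper subcover, which I do not expect to be an obstacle.
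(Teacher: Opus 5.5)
Your proof is correct and follows the paper's argument. Compactness applied to the open cover by cosets gives finite index, and Lemma \ref{7.19} together with the coset partition gives closedness in one direction and openness in the other; the only difference is that the paper first picks a basic open filter $F\subseteq G$ from the linear base and covers $L$ by cosets of $F$ before passing to cosets of $G$, a detour you rightly skip because $G$ is itself open, which also bears out your remark that linearity and zero-dimensionality are not used.
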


\begin{proof}
Suppose first that $G$ is open. Since $\mathcal U$ is a linear topology, there is a base $\beta$ for $\mathcal U$ such that every member of $\beta$ containing $1$ is a filter. As $G$ is an open neighbourhood of $1$, we may choose $F\in\beta$ with
\[
1\in F\subseteq G.
\]
Then $F$ is an open filter. The cosets $x/F$ cover $L$. Since $(L,\mathcal U)$ is compact and, by Lemma \ref{7.19}, each coset $x/F$ is open, there exist $x_1,\ldots,x_n\in L$ such that
\[
L=\bigcup_{i=1}^n x_i/F.
\]
Since $F\subseteq G$, we have $x_i/F\subseteq x_i/G$ for each $i$, and hence
\[
L=\bigcup_{i=1}^n x_i/G.
\]
Thus $G$ has finite index. After deleting repeated cosets, write
\[
L=G\cup x_2/G\cup\cdots\cup x_m/G.
\]
Since $G$ is open, Lemma \ref{7.19} implies that each coset $x_i/G$ is open. Therefore
\[
L\setminus G=\bigcup_{i=2}^m x_i/G
\]
is open, and so $G$ is closed.

Conversely, suppose that $G$ is closed and of finite index. Then there are finitely many distinct cosets of $G$ such that
\[
L=G\cup x_2/G\cup\cdots\cup x_m/G.
\]
By Lemma \ref{7.19}, since $G$ is closed, each coset $x_i/G$ is closed. Hence
\[
L\setminus G=\bigcup_{i=2}^m x_i/G
\]
is closed. It follows that $G$ is open. This proves the theorem.
\end{proof}

\begin{proposition}\label{7.21}
Let $\mathcal{F}(L)\setminus\{\{1\}\}$ be a global system of filters of a residuated lattice $L$. If $(L,\mathcal{T}_{\mathcal{F}(L)\setminus\{\{1\}\}})$ is compact and Hausdorff, then $L$ is finitely approximable.
\end{proposition}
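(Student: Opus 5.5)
Write $\mathcal G=\mathcal{F}(L)\setminus\{\{1\}\}$ and $\mathcal T=\mathcal T_{\mathcal G}$. By Theorem \ref{8.2} it suffices to show $\bigcap\mathcal F(L)_{\mathrm{fin}}=\{1\}$, i.e.\ that the finite index topology is Hausdorff. We will get this by showing that \emph{every} member of $\mathcal G$ has finite index, and then using Hausdorffness of $\mathcal T$.

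First, finite index. Take $F\in\mathcal G$. Every coset $x/F$ is $\mathcal T$-open: it is a basic open set of $\mathcal T_{\mathcal G}$ (equivalently, $F=1/F$ is open and Lemma \ref{7.19} applies). The cosets of $F$ partition $L$ into pairwise disjoint open sets. Since $(L,\mathcal T)$ is compact, finitely many of them cover $L$, and by disjointness there are only finitely many cosets. Hence $L/F$ is finite, and so $\mathcal G\subseteq\mathcal F(L)_{\mathrm{fin}}$. This is the same argument as in the first part of Theorem \ref{7.20}, or in the proof of Theorem \ref{3.10}.

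Second, separation. Because $\mathcal T$ is Hausdorff, Theorem \ref{4.7} gives $\bigcap\mathcal G=\{1\}$. Combined with $\mathcal G\subseteq\mathcal F(L)_{\mathrm{fin}}$, this yields
\[
\bigcap\mathcal F(L)_{\mathrm{fin}}\subseteq\bigcap\mathcal G=\{1\}.
\]
So $L$ is finitely approximable by the remark preceding Theorem \ref{8.2}. Equivalently, $L$ is a subdirect product of the finite quotients $L/F$, $F\in\mathcal G$, by Proposition \ref{4.6}.

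The main obstacle is a degenerate case rather than a hard estimate. If $\{1\}$ were the only filter, then $\mathcal G$ would be empty and Theorem \ref{4.7} would not apply. Under the standing hypothesis that $\mathcal G$ is a system of filters, it is nonempty, so this case is excluded; in any event a residuated lattice with $\mathcal G$ empty has only the filters $\{1\}$ and $L$, forcing $\{1\}=L$, so $L$ is trivial and the claim holds. Beyond this, the only care needed is that compactness plus open disjoint cosets gives finiteness of $L/F$. This uses that $x/F$ is open, which comes directly from the definition of $\mathcal T_{\mathcal G}$, so no appeal to zero-dimensionality is needed.
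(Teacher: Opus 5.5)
Your proof is correct and takes the same route as the paper. The paper's proof just cites Theorem \ref{4.7} (Hausdorffness gives $\bigcap(\mathcal F(L)\setminus\{\{1\}\})=\{1\}$) and Theorem \ref{7.20} (open filters in a compact such lattice have finite index); your direct compactness argument for finite index and your handling of the trivial case spell out what those citations leave implicit.
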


\begin{proof}
It follows from Theorem \ref{4.7} and Theorem \ref{7.20}.
\end{proof}

By a chain of prime filters of a residuated lattice $L$ we mean a finite strictly increasing sequence $P_{0}\subset P_{1}\subset\cdots\subset P_{n}$, the \emph{length} of the chain is $n$.
\begin{definition}\label{7.23}
\emph{We define the \emph{dimension} of a residuated lattice $L$ to be the supremum of the lengths of all chains of prime filters in $L$: it is an integer $n\geq 0$, or $+\infty$ (assuming $L$ is nontrivial). }
\end{definition}

Note that such a dimension is known as the \emph{Krull dimension} in commutative rings (cf. \cite{Atiyah}).
\begin{proposition}\label{7.24}
If a residuated lattice $L$ is subdirectly irreducible, then no filter-induced Hausdorff linear topology on $L$ is non-discrete. If, in addition, $L$ satisfies $DCC$, then $L$ has no non-discrete Hausdorff zero-dimensional linear topology.
\end{proposition}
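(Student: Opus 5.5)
The first assertion comes from Theorem \ref{4.7} together with subdirect irreducibility. Let $\mathcal F=\{F_i:i\in I\}$ be a system of filters of $L$ such that $(L,\mathcal T_{\mathcal F})$ is Hausdorff. Theorem \ref{4.7} then gives $\bigcap_{i\in I}F_i=\{1\}$. We may assume $L$ is nontrivial, since the trivial lattice carries only the discrete topology. Let $F_0$ be the least non-trivial filter of $L$; it exists because $L$ is subdirectly irreducible, via the correspondence of Theorem \ref{4.1}.

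Suppose every $F_i$ were non-trivial. Then $F_0\subseteq F_i$ for all $i$, so $F_0\subseteq\bigcap_i F_i=\{1\}$, which is a contradiction. Hence some $F_i$ equals $\{1\}$, that is, $\{1\}\in\mathcal F$. Proposition \ref{4.9a}(i) now shows that $\mathcal T_{\mathcal F}$ is discrete. A one-element system is also a system of filters, so this covers single-filter topologies $\mathcal T_F$ as well.

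For the second assertion, assume in addition that $L$ satisfies $DCC$, and let $\mathcal U$ be a Hausdorff zero-dimensional linear topology on $L$. By Theorem \ref{5.6a}, or Corollary \ref{5.6b}, there is a filter $F$ with $\mathcal U=\mathcal T_F=\mathcal T_{\{F\}}$. So $\mathcal U$ is filter-induced by the singleton system $\{F\}$, and the first part applies: $\mathcal U$ is discrete. More directly, Theorem \ref{4.7} applied to $\{F\}$ gives $F=\{1\}$, and then $\mathcal T_F$ is discrete. Note that this direct route does not even need subdirect irreducibility once $DCC$ is assumed.

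The main point requiring care is the passage from an arbitrary zero-dimensional linear topology to a filter-induced one. This is exactly where $DCC$ is needed, and it is supplied by Theorem \ref{5.6a}. Everything else is a direct combination of Theorem \ref{4.7} and Proposition \ref{4.9a}.
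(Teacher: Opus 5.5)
Your proof is correct and follows the paper's argument. Theorem \ref{4.7} gives $\bigcap\mathcal F=\{1\}$, the least non-trivial filter (equivalently, complete meet-irreducibility of $\{1\}$ in a subdirectly irreducible $L$) forces $\{1\}\in\mathcal F$, and Proposition \ref{4.9a} yields discreteness; under $DCC$, Theorem \ref{5.6a} or Corollary \ref{5.6b} reduces to the filter-induced case. Your side remark is also right and slightly sharpens the paper's second assertion: once $\mathcal U=\mathcal T_F$, Theorem \ref{4.7} applied to the system $\{F\}$ already forces $F=\{1\}$, so subdirect irreducibility is not needed there.
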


\begin{proof}
Let $\mathcal F$ be a system of filters such that $\mathcal T_{\mathcal F}$ is Hausdorff. By Theorem \ref{4.7}, $\bigcap\mathcal F=\{1\}$. In a subdirectly irreducible algebra, the least congruence is completely meet-irreducible; under the filter-congruence correspondence this says that if an intersection of filters is $\{1\}$, then one of the filters is $\{1\}$. Hence $\{1\}\in\mathcal F$, and Proposition \ref{4.9a} implies that $\mathcal T_{\mathcal F}$ is discrete. If $L$ also satisfies $DCC$, Corollary \ref{5.6b} reduces every zero-dimensional linear topology to the filter-induced case.
\end{proof}

\begin{theorem}\label{7.25}
Let $L$ be a residuated lattice. Suppose that there exists a system of filters $\mathcal F$ such that
\[
\{1\}\notin\mathcal F \qquad\text{and}\qquad \bigcap_{F\in\mathcal F}F=\{1\}.
\]
Then $\mathcal T_{\mathcal F}$ is a non-discrete Hausdorff linear topology on $L$.
\end{theorem}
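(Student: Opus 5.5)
The plan is to assemble three facts already established in the paper; nothing new needs to be built.

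\emph{Linearity and compatibility.} Since $\mathcal F$ is a system of filters, i.e.\ $(\mathcal F,\subseteq)$ is down-directed, the remark following Definition \ref{2.1} (via Theorem \ref{2.7} and Proposition \ref{2.6}) shows that $\mathcal T_{\mathcal F}$ is a linear topology on $L$. It also shows that $(L,\mathcal T_{\mathcal F})$ is a topological residuated lattice. The natural base consists of the cosets $x/F$ with $F\in\mathcal F$ and $x\in L$. A coset $x/F$ contains $1$ exactly when $x/F=1/F=F$, which is a filter; conversely, a filter always contains $1$. So this base witnesses linearity directly.

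\emph{Hausdorffness.} This is immediate from Theorem \ref{4.7}, implication (iv)$\Rightarrow$(i): the hypothesis $\bigcap_{F\in\mathcal F}F=\{1\}$ is exactly condition (iv). For the reader's convenience, one can also recall the underlying mechanism. For $x\neq y$, the element $(x\to y)\odot(y\to x)$ differs from $1$, so it lies outside some $F\in\mathcal F$. Then $x/F$ and $y/F$ are disjoint open neighbourhoods of $x$ and $y$.

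\emph{Non-discreteness.} This is Proposition \ref{4.9a}(i): $\mathcal T_{\mathcal F}$ is discrete if and only if $\{1\}\in\mathcal F$, and the hypothesis $\{1\}\notin\mathcal F$ rules this out. The only direction needed is the converse one, which can be made explicit. If $\{1\}$ were open, then some $F\in\mathcal F$ would satisfy $1/F=F\subseteq\{1\}$, forcing $F=\{1\}\in\mathcal F$, a contradiction. Hence $1$ is not an isolated point.

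There is no real obstacle here. The only points to check are that the hypotheses match the exact forms required by Theorem \ref{4.7} and Proposition \ref{4.9a}, and that ``non-discrete'' is correctly read as ``$1$ is not isolated.''
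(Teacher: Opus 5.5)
Your proposal is correct and follows the paper's proof: linearity comes from the filter-system construction, Hausdorffness from Theorem~\ref{4.7} (iv)$\Rightarrow$(i), and non-discreteness from Proposition~\ref{4.9a}(i) applied to the hypothesis $\{1\}\notin\mathcal F$. Your additional checks are correct unpackings of the cited results: that the cosets $x/F$ form a base witnessing linearity, and the direct argument that $1$ is not isolated.
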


\begin{proof}
By construction, $\mathcal T_{\mathcal F}$ is a linear topology on $L$. The equality $\bigcap_{F\in\mathcal F}F=\{1\}$ gives Hausdorffness by Theorem \ref{4.7}. Since $\{1\}\notin\mathcal F$, Proposition \ref{4.9a} shows that the topology is not discrete.
\end{proof}

\begin{example}\label{7.26}
Let $\mathcal I=([0,1],\min,\max,\odot,\rightarrow,0,1)$ be the G\"odel algebra of Example \ref{4.5}, and let
\[
\mathcal F=\{(a,1]:a\in[0,1)\}.
\]
Then $\mathcal F$ is a system of filters, $\{1\}\notin\mathcal F$, and
\[
\bigcap_{a\in[0,1)}(a,1]=\{1\}.
\]
Therefore Theorem \ref{7.25} yields a non-discrete Hausdorff zero-dimensional linear topology $\mathcal T_{\mathcal F}$ on $\mathcal I$. For $a\in[0,1)$, a direct computation gives
\[
x/(a,1]=
\begin{cases}
\{x\}, & x\leq a,\\
(a,x], & x>a.
\end{cases}
\]
Consequently, $\mathcal T_{\mathcal F}$ is the topology in which every point $x<1$ is isolated, while $1$ has the neighbourhood base
\[
(a,1]\qquad(a<1).
\]
In particular, $\mathcal T_{\mathcal F}$ is Hausdorff and non-discrete.
\end{example}


\section{Conclusions and future research topics}
In this paper, we studied linear topologies on residuated lattices induced by systems of filters. The main results give Hausdorff criteria in terms of intersections of filters, characterize compact profinite topological residuated lattices through residual finiteness and closed subdirect products of finite discrete residuated lattices, and clarify the role of the descending chain condition in the representation of zero-dimensional linear topologies. In particular, under $DCC$ the canonical correspondence between filters and zero-dimensional linear topologies is bijective.

Several problems remain open. Given a profinite algebra $A$, one may ask whether there is an algebra $B$ such that $A\cong \widehat{B}$. In lattice theory this problem is related to Gr\"{a}tzer's celebrated problem of representable posets, see \cite[Problems 34--35]{Gratzer}. Another natural question, going back to Johnstone's perspective on Stone topological algebras \cite[Sec. VI.2.6]{Johnstone}, is to determine when a Stone topological residuated lattice is profinite. We leave these questions for future work.\\

\medskip
\noindent\textbf{Acknowledgments}
\medskip\\
\indent This research is supported by a grant of National Natural Science Foundation of China (12571490).

\end{document}